\newtheorem{example}{Example}[section]
\newtheorem{remark}{Remark}[section]
\newtheorem{theorem}{Theorem}[section]
\newtheorem{proposition}{Proposition}[section]
\newtheorem{corollary}{Corollary}[section]
\newtheorem{definition}{Definition}[section]
\newtheorem{example*}{Example}
\newtheorem{remark*}{Remark}
\newtheorem{lemma*}{Lemma}
\newtheorem*{theorem*}{Theorem}
\newtheorem*{proposition*}{Proposition}
\newtheorem*{corollary*}{Corollary}
\newtheorem*{definition*}{Definition}
\title{Rigidity of Translation Surfaces in the Three-dimensional Sphere $\mathbb{S}^3$ }
\author{Ferreira, T. A. and dos Santos, J. P.}
\address{Jo\~ao Paulo dos Santos - Departamento de Matem\'atica, 
Universidade de Bras\'ilia, 70910-900, Bras\'ilia-DF, Brazil}
\email{joaopsantos@unb.br}
\address{Tarcios Andrey Ferreira - Departamento de Matemática, Universidade de Brasília, 70910-900, Brasília-DF, Brazil }
\email{t.a.ferreira@mat.unb.br}
\date{\today}
\begin{document}

\subjclass[2020]{53C40, 53C42}

\keywords{minimal surfaces, sphere, space forms, constant mean curvature, translation surface, flat surfaces} 

\begin{abstract}
A translation surface in the three-dimensional sphere $\mathbb{S}^3$ is a surface generated by the quaternionic product of two curves, called generating curves. In this paper, we present rigidity results for such surfaces. We introduce an associated frame for curves in $\mathbb{S}^3$,  and by means of it, we describe the local intrinsic and extrinsic geometry of translation surfaces in $\mathbb{S}^3$. The rigidity results, concerning minimal and constant mean curvature surfaces, are given in terms of the curvature and torsion of the generating curves and their proofs rely on the associated frame of such curves. Finally, we present a correspondence between translation surfaces in $\mathbb{S}^3$  and translation surfaces in $\mathbb{R}^3$. We show that these surfaces are locally isometric, and we present a relation between their mean curvatures. 
\end{abstract}

\maketitle

\section{Introduction} 

Translation surfaces in $\mathbb{R}^3$ are defined as the sum of two curves $\alpha: I \subset \mathbb{R} \to \mathbb{R}^3$ and $\beta: J \subset \mathbb{R} \to \mathbb{R}^3$. More precisely, they are a special case of a broader class of surfaces known as Darboux surfaces. Following \cite{LopezHasanis}, the origin of these surfaces dates back to \cite{Darboux}, where they are described as the motion of a curve under a one-parameter family of rigid motions in $\mathbb{R}^3$. A general parameterization of such a surface is given by $\Phi(s, t) = A(t) \cdot \alpha(s) + \beta(t)$, where $A(t)$ is an orthogonal matrix. In the particular case where $A(t)$ is the identity, the surface $S \subset \mathbb{R}^3$ can be locally written as the sum of two curves, $\Phi(s, t) = \alpha(s) + \beta(t)$, and is called a translation surface. The curves $\alpha$ and $\beta$ are referred to as the generating curves of $S$, and the terminology reflects the fact that the surface $S$ is obtained by translating one curve along the other.

In general, let $G$ be a Lie group with group operation denoted by ($\cdot$). A translation surface $S \subset G$ is a surface that can be locally written as the product $\Psi(s, t) = \alpha(s) \cdot \beta(t)$ of two curves $\alpha: I \subset \mathbb{R} \to G$ and $\beta: J \subset \mathbb{R} \to G$. The curves $\alpha$ and $\beta$ are referred to as the generating curves of $S$. This work is inspired by previous studies on minimal translation surfaces, such as \cite{LopezHasanis,Munteanu,Lopez,LopezMunteanu,LopezPerdomo,MoruzMunteanu,Yoon}. These earlier works primarily focus on the Thurston 3-dimensional geometries, many of which are also Lie groups. The aim of the present paper is to investigate constant mean curvature (CMC) and minimal translation surfaces in the 3-dimensional sphere $\mathbb{S}^3$.

It is well known that the unit 3-sphere $\mathbb{S}^3 \subset \mathbb{R}^4$ admits a Lie group structure equipped with a bi-invariant metric, when viewed through its quaternionic structure. This structure plays a crucial role in the theory of flat surfaces, from the classical Bianchi–Spivak construction (see \cite{Galvez} and \cite{Spivack4}) to the more sophisticated approach developed in \cite{Kitagawa}. It continues to be relevant today, as evidenced by recent works such as \cite{AledoGalvezMira,GalvezMira,ManfioSantos}.

Our contributions in this work are presented as rigidity results regarding the mean curvature. To achieve these results, in section \ref{section_preliminary} we establish the local geometry of generic translation surfaces in $\mathbb{S}^3$ by means of their generating curves (Theorem \ref{theo_prop_surface_general}). A critical element for understanding such a local geometry, and for subsequent results, is the introduction of a suitable frame field, which has its own interest (Definition \ref{def:frames}). From such a frame, geometric objects like the Gaussian curvature and the mean curvature can be fully described. It plays a fundamental role in proving our main results. 

In section \ref{SectionFlat}, we present some results revisiting flat surfaces. According to \cite{XuHan}, if $G$ is an $n$-dimensional Lie group ($n \geq 3$) equipped with a bi-invariant metric, and $S$ is a translation surface in $G$, that is, locally parametrized as the group product of two curves, with constant Gaussian curvature, then $S$ must be flat. Therefore, it suffices to consider the flat case. We apply this result, together with Gauss equation, to show that there exists no totally umbilic surface in $\mathbb{S}^3$ that can be written as a translation surface (Theorem \ref{TeoTotalGeo}). Furthermore, we apply the properties of the frame mentioned above to provide a sufficient condition for the product $\alpha \cdot \beta$ to to be flat. Indeed, when $\alpha$ and $\beta$ are curves parametrized by the arc length, we will see that the vector fields $T_\alpha = \overline{\alpha} \cdot t_\alpha$ and $T_\beta = \beta \cdot \overline{t}_\beta$ are well-defined and belong to the corresponding frame to $\alpha$ and $\beta$, playing a crucial role in this work. In this section, it is shown that if the angle between $T_\alpha$ and $T_\beta$ is constant then $\alpha \cdot \beta$ is flat. In this case, we establish a rigidity result for $\alpha$ and $\beta$ in terms of great circles and general helices in $\mathbb{S}^3$ (see \cite{Barros} for a reference to such curves). As a consequence, we provide a nice result for curves in $\mathbb{S}^3$.

Section~\ref{SectionCMC} is dedicated to the rigidity of the CMC  Clifford tori, including the minimal Clifford tori, through conditions imposed on the generating curves of a translation surface $\alpha \cdot \beta$. In particular, Theorem \ref{TeoCMCbothcurvatzero} establishes that the only translation surfaces generated by great circles are the Clifford tori. Furthermore, we present a correlation between $T_{\alpha}$ and $T_{\beta}$, and the precise value of the mean curvature in the case of translation surfaces generated by two great circles. More precisely, such mean curvature is entirely determined by the value of $\langle T_\alpha, T_\beta \rangle$, which is constant in this case.  The second result of this section (Theorem \ref{TeoCMCcurvatzero}) provides a rigidity result of a CMC translation surface as a CMC Clifford torus, taking into account the constancy of $\langle T_\alpha, T_\beta \rangle$ or the vanishing of the curvature of the generating curves.

In section \ref{SectionR3}, we present a correspondence between translating surfaces in the Euclidean 3-space $\mathbb{R}^3$ and in the 3-sphere $\mathbb{S}^3$. We show that these surfaces are locally isometric, and we present a relation between their mean curvatures. Namely, we have    
\begin{theorem*}[Theorem \ref{theoS3R3}]
    Let  $M \subset \mathbb{S}^3$ be a translation surface generated by curves  $\alpha$ and $\beta$ with curvatures $ \kappa_\alpha$, $ \kappa_\beta$ and, when $\kappa_\alpha \not\equiv 0$ and/or $\kappa_\beta \not\equiv 0$,  torsions $\tau_\alpha$ and  $ \tau_\beta$. Then this surface is locally isometric to a translation surface $\tilde{M} \subset \mathbb{R}^3$ generated by curves $\tilde{\alpha}$ and $\tilde{\beta}$ with curvatures $\tilde{\kappa}_\alpha = \kappa_\alpha  $, $\tilde{\kappa}_\beta = \kappa_\beta  $ and torsions $\tilde{\tau}_\alpha = (\tau_\alpha-1)$, $\tilde{\tau}_\beta = ( \tau_\beta+ 1 )$. The reciprocal identification is also true. Moreover, the mean curvatures $\tilde{H}$ and $H$ satisfy 
    $$ \tilde{H} = H + \frac{\langle T_\alpha , T_\beta \rangle}{\sqrt{1 - \langle T_\alpha , T_\beta \rangle^2 }} . $$
\end{theorem*}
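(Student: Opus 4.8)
The plan is to push everything through the associated frame of Definition \ref{def:frames} and to exploit that the frame vectors $T_\alpha=\overline\alpha\cdot t_\alpha$ and $T_\beta=\beta\cdot\overline{t_\beta}$ are \emph{unit} vectors lying in the imaginary quaternions $\operatorname{Im}\mathbb{H}\cong\mathbb{R}^3$. The three assertions — the curvature/torsion identifications, the local isometry, and the mean-curvature relation — will all follow once I (i) show that the associated frames satisfy honest Frenet equations in $\mathbb{R}^3$ with the shifted torsions, (ii) compute the first fundamental form of $\alpha\cdot\beta$ and match it with that of $\tilde\alpha+\tilde\beta$, and (iii) compare the two mean curvatures using Theorem \ref{theo_prop_surface_general}.

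Step (i). I would differentiate $T_\alpha=\overline\alpha\,\alpha'$ (unit speed), using $\overline\alpha\alpha=1$ and the fact that, since $\alpha$ is a curve in $\mathbb{S}^3\subset\mathbb{R}^4$, its Frenet data satisfies $\alpha''=\kappa_\alpha n_\alpha-\alpha$, the term $-\alpha$ being the second fundamental form of $\mathbb{S}^3$. A short quaternionic computation gives $\overline\alpha'\alpha'=1$, hence $T_\alpha'=\kappa_\alpha N_\alpha$ with $N_\alpha:=\overline\alpha\,n_\alpha$; differentiating once more and using $n_\alpha'=-\kappa_\alpha t_\alpha+\tau_\alpha b_\alpha$ yields
\[ N_\alpha'=-\kappa_\alpha T_\alpha+(\tau_\alpha-1)B_\alpha,\qquad B_\alpha:=\overline\alpha\,b_\alpha . \]
Thus $\{T_\alpha,N_\alpha,B_\alpha\}$ obeys the Frenet system of $\mathbb{R}^3$ with curvature $\kappa_\alpha$ and torsion $\tau_\alpha-1$; the shift $-1$ is exactly the contribution of the ambient term $-\alpha$. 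The same computation for $T_\beta=\beta\,\overline{t_\beta}$, where the conjugation makes the relevant frame map orientation-reversing, produces the opposite sign and gives torsion $\tau_\beta+1$. By the fundamental theorem of space curves there exist curves $\tilde\alpha,\tilde\beta\subset\operatorname{Im}\mathbb{H}=\mathbb{R}^3$, unique up to rigid motion, whose unit tangents integrate $T_\alpha$ and $-T_\beta$ (the orientation making their Frenet frames positively oriented); these are the asserted curves, with $\tilde\kappa_\alpha=\kappa_\alpha$, $\tilde\kappa_\beta=\kappa_\beta$, $\tilde\tau_\alpha=\tau_\alpha-1$, $\tilde\tau_\beta=\tau_\beta+1$, and the construction is clearly reversible.

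Step (ii). For $\Psi=\alpha\cdot\beta$ the bi-invariance of the metric gives $E=|\alpha'\beta|^2=1$ and $G=|\alpha\beta'|^2=1$, while reducing the mixed term by the isometry $q\mapsto\overline\alpha q\overline\beta$ (which sends $\Psi\mapsto 1$, $\Psi_s\mapsto T_\alpha$, $\Psi_t\mapsto-T_\beta$) gives $F=\langle\alpha'\beta,\alpha\beta'\rangle=-\langle T_\alpha,T_\beta\rangle$, so $EG-F^2=1-\langle T_\alpha,T_\beta\rangle^2$. Since $\tilde\alpha'=T_\alpha$ and $\tilde\beta'=-T_\beta$ are literally the same vectors of $\mathbb{R}^3$, the translation surface $\tilde\alpha+\tilde\beta$ has $\tilde E=\tilde G=1$ and $\tilde F=\langle T_\alpha,-T_\beta\rangle=-\langle T_\alpha,T_\beta\rangle$. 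The coefficients coincide, which proves the local isometry.

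Step (iii). Here the single structural difference between the two geometries does all the work: in $\mathbb{R}^3$ a translation surface has vanishing mixed second derivative, $\tilde\Phi_{st}=0$, whereas in $\mathbb{S}^3$ the quaternionic product is genuinely noncommutative and $\Psi_{st}=t_\alpha t_\beta\neq0$. Applying again $q\mapsto\overline\alpha q\overline\beta$, the common unit normal becomes $\nu_0=(T_\alpha\times T_\beta)/\sqrt{1-\langle T_\alpha,T_\beta\rangle^2}$, the coefficients $e,g$ match those of $\tilde M$, but the extra coefficient $f=\langle t_\alpha t_\beta,\nu\rangle=-\sqrt{1-\langle T_\alpha,T_\beta\rangle^2}$ is nonzero. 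Feeding $e,f,g,E,F,G$ into $H=(eG-2fF+gE)/\bigl(2(EG-F^2)\bigr)$ and comparing with $\tilde H=(\tilde e+\tilde g)/\bigl(2(1-\langle T_\alpha,T_\beta\rangle^2)\bigr)$, every term cancels except the one produced by $f$, namely $-2Ff/\bigl(2(EG-F^2)\bigr)=-\langle T_\alpha,T_\beta\rangle/\sqrt{1-\langle T_\alpha,T_\beta\rangle^2}$, which yields $\tilde H=H+\langle T_\alpha,T_\beta\rangle/\sqrt{1-\langle T_\alpha,T_\beta\rangle^2}$. The main obstacle I anticipate is bookkeeping the orientation and sign conventions consistently across the two associated frames — the $\beta$-frame map being orientation-reversing — so that the torsion shifts, the sign of $F$, and the choice of $\nu$ all fit together; the geometric content, however, is simply that the correction term is the image of the non-vanishing mixed coefficient $f$ coming from the noncommutativity of the quaternionic product.
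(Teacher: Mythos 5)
Your proposal is correct and follows essentially the same route as the paper's proof: derive the Frenet-type systems \eqref{propertiesalpha}--\eqref{propertiesbeta} for the quaternionic frames (torsion shifts $\tau_\alpha-1$ and $\tau_\beta+1$), invoke the fundamental theorem of space curves to produce $\tilde\alpha,\tilde\beta\subset\mathbb{R}^3$, match first fundamental forms to get the local isometry, and extract the mean-curvature correction from the mixed second-fundamental-form coefficient $f$, which vanishes for $\tilde\alpha+\tilde\beta$ but not for $\alpha\cdot\beta$.

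The one place you genuinely diverge is the orientation bookkeeping, and there your version is the more careful one. The right frame $\{T_\beta,N_\beta,B_\beta\}$ is negatively oriented in $\mathcal{S}\cong\mathbb{R}^3$ (at $\beta=e_1$, $t_\beta=e_2$, $n_\beta=e_3$, $b_\beta=e_4$ one gets $\hat{T}_\beta\times \hat{N}_\beta=(-i)\times(-j)=k=-\hat{B}_\beta$), so it cannot literally serve as a Frenet frame; your choice $\tilde\beta'=-T_\beta$ is exactly what makes the torsion of $\tilde\beta$ come out as $+(\tau_\beta+1)$ rather than $-(\tau_\beta+1)$, and it is forced by the sign $F=\langle X_s,X_t\rangle=\langle T_\alpha,\beta'\cdot\overline{\beta}\rangle=-\langle T_\alpha,T_\beta\rangle$ that you compute (which is indeed what the quaternionic identity $\beta'\cdot\overline{\beta}=-\beta\cdot\overline{\beta'}$ gives). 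The paper instead writes $F=+\langle T_\alpha,T_\beta\rangle$ and assigns $\{T_\beta,N_\beta,B_\beta\}$ directly as the Frenet frame of $\tilde\beta$; your bookkeeping and the paper's thus differ by compensating signs, but both are structurally the same argument and both land on the same final identity $\tilde H=H+\langle T_\alpha,T_\beta\rangle/\sqrt{1-\langle T_\alpha,T_\beta\rangle^2}$. Two minor points: attributing $f\neq 0$ to noncommutativity is not quite right --- $X_{st}=t_\alpha\cdot t_\beta$ is nonzero simply because it is a product of unit quaternions, the relevant contrast being with the additive structure of $\mathbb{R}^3$, where the mixed derivative of $\tilde\alpha(s)+\tilde\beta(t)$ vanishes identically; and, unlike the paper, you do not separate the degenerate cases $\kappa_\alpha\equiv 0$ or $\kappa_\beta\equiv 0$, though your construction covers them, since $T_\alpha$ (or $T_\beta$) is then constant and $\tilde\alpha$ (or $\tilde\beta$) is a straight line.
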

\noindent Such a Theorem provides some interesting applications when compared to the results of \cite{LopezHasanis,LopezMunteanu}.

We finish this work focused on non-existence results for a class of translation minimal surfaces. Theorems \ref{theokconsttau-1} and \ref{Theo_min_2helix} in section \ref{section_minimal} establish the non-existence of minimal surfaces when non-vanishing curvatures and torsions of the generating curves are constant, providing the rigidity of the minimal Clifford tori through conditions on the generating curves.

\section{Preliminary Concepts}\label{section_preliminary}

\subsection{\texorpdfstring{$\mathbb{S}^3$}{TEXT} as a Lie group with the quaternionic model} In this subsection, we present the quaternionic model for $\mathbb{S}^3$, which equips it with the structure of a Lie group endowed with a bi-invariant metric. We also introduce basic concepts and properties that will be useful throughout this work. For further details, we refer the interested reader to \cite{Galvez,Spivack4}.

We begin by identifying $\mathbb{R}^4$ with the nonzero quaternions $\mathbb{H}^* = \mathbb{H} \setminus \{ 0 \}$ in the standard way: $(x_1, x_2, x_3, x_4)$ is viewed as the quaternion $x_1 + i x_2 + j x_3 + k x_4$.  Hence, for $x = (x_1,x_2, x_3, x_4) $ and $y = (y_1,y_2,y_3,y_4) $, we have 
$$ x \cdot y  = \begin{bmatrix}
    x_1y_1 - x_2y_2 - x_3y_3 - x_4y_4 \\
    x_1y_2 + x_2y_1 + x_3y_4 - x_4y_3 \\
    x_1y_3 - x_2y_4 + x_3y_1 + x_4y_2 \\
    x_1y_4 + x_2y_3 - x_3y_2 + x_4y_1
\end{bmatrix} .$$
We also define the conjugate of  $ x \in \mathbb{R}^4$ as $\overline{x} = (x_1, - x_2, - x_3 ,- x_4)$.

Now, let $x,y,a \in \mathbb{H}^*$. The following summarizes the properties of this group and they follow  from the definition of quaternions and the usual metric of $\mathbb{R}^4$
\begin{equation}\label{eq_quater_properties}
 \begin{array}{llllll}
   1. & \overline{x \cdot y} = \overline{y} \cdot \overline{x}  & 3. & \langle x \cdot y , x \cdot y \rangle = \langle x,x \rangle \langle y,y \rangle. \\
    2. &  \langle x \cdot a , y \cdot a \rangle = \langle x , y  \rangle. & 4. & x^{-1} = \overline{x}/|x|^2 . \\
\end{array}   
\end{equation}

Therefore, since $\mathbb{S}^3 = \{ (x_1,x_2,x_3,x_4) \in \mathbb{R}^4 \ | \ x_1^2 + x_2^2 + x_3^2 + x_4^2   = 1 \} $, it follows from the previously listed properties that, for all $x,y \in \mathbb{S}^3$, we have  $\langle x \cdot y, x \cdot y \rangle = 1$, that is, the product is closed in $\mathbb{S}^3$. Since this product is differentiable, it endows $\mathbb{S}^3$ with the structure of a Lie group, whose identity element is $e_1=(1,0,0,0)$. We also point out that $\mathcal{S} = (\{ 0\} \times \mathbb{R}^3) \cap \mathbb{S}^3$ can be seen as the space of purely imaginary unit quaternions and this notation will be important as the set $\mathcal{S}$ appears recursively throughout this work. Finally, we will use the notation $x \perp y$, for $x,y \in \mathbb{S}^3$, to indicate that $\langle x, y \rangle = 0$.

By the property 4 in \eqref{eq_quater_properties} we conclude that $x^{-1}=\overline{x}$ whenever $x \in \mathbb{S}^3$. Therefore, if $x \perp y$, then $\langle x \cdot \overline{y}, e_1 \rangle = 0$ and $\langle \overline{x}, \overline{y} \rangle = 0$. Moreover, if  $x_1 = y_1 = 0 $, then  
$$x \cdot y = (0, x_3y_4 - x_4y_3, x_4y_2 - x_2y_4,  x_2y_3 - x_3y_2). $$
Now let $\tilde{x} = (x_2,x_3,x_4), \ \tilde{y} = (y_2,y_3,y_4) \in \mathbb{R}^3$, and set  $x = (0,\tilde{x}) , y = (0,\tilde{y}) \in \mathbb{R}^4$. Thus $$x \cdot y = (0 , \tilde{x} \times \tilde{y} ),$$
where $\times$ denotes the cross product in $\mathbb{R}^3$.

\subsection{Frenet-Serret equations and special frames for curves in \texorpdfstring{$\mathbb{S}^3$}{TEXT}} 

In what follows, let $\nabla$ be the standard Levi-Civita connection in $\mathbb{S}^3$. Let $\alpha : I \subset \mathbb{R} \rightarrow \mathbb{S}^3$ be a smooth curve parametrized by the arc length. Following \cite[Chapter 7, Part B]{Spivack4}, we denote the tangent vector of $\alpha.$ by $t_\alpha=\alpha'$. The curvature of $\alpha$ is defined as $\kappa_{\alpha}(s):= \left| \nabla_{\alpha'(s)} \alpha'(s) \right|$. At the points $s$ where $\kappa_\alpha(s) \neq 0$, we define $n_\alpha (s)$ as $n_\alpha(s) = \kappa_{\alpha}^{-1}(s) \nabla_{\alpha'(s)} t_{\alpha}(s)$. Finally, at the points where both $t_\alpha$ and $n_\alpha$ are well defined, we define the binormal vector field to $\alpha$ as the unit vector in $T_{\alpha}\mathbb{S}^3$ that is orthogonal to both $t_\alpha$ and $n_\alpha$, and such that the frame $\left\{ t_\alpha, n_\alpha, b_{\alpha} \right\}$ is positively oriented with respect to the orientation of $\mathbb{S}^3$. Throughout this paper, we will consider the orientation on $\mathbb{S}^3$ such that the unit normal field is given by $N(p)=p$. In this case, $b_\alpha \in  T_{\alpha} \mathbb{S}^3$ defined so that $\det (\alpha, t_\alpha, n_\alpha, b_\alpha) >0$. 

The well-known Frenet-Serret equations for smooth curves in $\mathbb{S}^3$, parametrized by the arc length are given by
$$ \left\{ \begin{array}{lcl}
     \nabla_{t_\alpha}  t_\alpha & = &  \kappa_\alpha n_\alpha ,\\
     \nabla_{t_\alpha}  n_\alpha & = &  - \kappa_\alpha t_\alpha + \tau_\alpha b_\alpha ,\\
     \nabla_{t_\alpha}  b_\alpha & = &  -\tau_\alpha n_\alpha , \\
\end{array}\right. $$
where $\kappa_\alpha$ and $\tau_\alpha$ are the curvature and torsion of $\alpha$, respectively. Thus, from the definition of $\nabla$, we derive the following equations
\begin{equation}\label{eq_curv_alpha}
\begin{cases}
\alpha' = t_\alpha,  \\
\alpha'' = \kappa_{\alpha} n_{\alpha}-\alpha ,  
\end{cases}  \ \ \ \ \ 
\begin{cases}
{t_\alpha}' = \kappa_{\alpha} n_{\alpha} -\alpha ,  \\
n_\alpha'= - \kappa_{\alpha} t_{\alpha} + \tau_\alpha b_\alpha,  \\
b_\alpha'= - \tau_{\alpha} n_{\alpha} . \\
\end{cases} 
\end{equation}
Since $t_\alpha$ is a unit vector field, we define the vector field $T_\alpha$ as the product $T_\alpha := \overline{\alpha} \cdot t_\alpha$. If $\kappa_\alpha \neq 0$, the Frenet frame $\left\{ t_\alpha, n_\alpha, b_\alpha \right\}$ is well defined, and we can extend this construction to define the vector fields $N_\alpha := \overline{\alpha} \cdot n_\alpha$ and $B_\alpha := \overline{\alpha} \cdot b_\alpha$. It follows from \eqref{eq_quater_properties} that $\{ T_\alpha, N_\alpha, B_\alpha \}$ provides an orthonormal frame. In the context of translation surfaces, it will be also useful to consider the frame $\{\hat{T}_\alpha, \hat{N}_\alpha, \hat{B}_\alpha \}$ defined by  $\hat{T}_\alpha = \alpha \cdot \overline{t_\alpha}$, $\hat{N}_\alpha = \alpha \cdot \overline{n_\alpha}$ and $\hat{B}_\alpha = \alpha \cdot \overline{b_\alpha}$. Let us formalize this construction with the following definition:
\begin{definition}
    Let $\alpha : I \subset \mathbb{R} \rightarrow \mathbb{S}^3$ be an arc length curve with curvature $\kappa_\alpha \neq 0$ everywhere. A quaternionic frame associated with $\alpha$ is defined as the orthonormal set $\left\{ T_\alpha, N_\alpha, B_\alpha \right\}$, where $T_\alpha = \overline{\alpha} \cdot t_\alpha$, $N_\alpha = \overline{\alpha} \cdot n_\alpha$ and $B_\alpha = \overline{\alpha} \cdot b_\alpha$. Similarly, we define the frame $\{\hat{T}_\alpha, \hat{N}_\alpha, \hat{B}_\alpha \}$ by $\hat{T}_\alpha = \alpha \cdot \overline{t_\alpha}$, $\hat{N}_\alpha = \alpha \cdot \overline{n_\alpha}$ and $\hat{B}_\alpha = \alpha \cdot \overline{b_\alpha}$. We call these the left and right frames, respectively.  \label{def:frames}
\end{definition}

The next proposition provides useful identifications for the frames  $\left\{ T_\alpha, N_\alpha, B_\alpha \right\}$ and $\{\hat{T}_\alpha, \hat{N}_\alpha, \hat{B}_\alpha \}$.

\begin{proposition} \label{prop:frames}
    Let $\alpha(s)$, be an arc length curve in $\mathbb{S}^3$ with $\kappa_\alpha \neq 0 $. Then we have 
    $$ \begin{array}{lll}
         T_\alpha = \overline{b_\alpha} \cdot n_\alpha, &   N_\alpha = \overline{t_\alpha} \cdot b_\alpha, &  B_\alpha = \overline{n_\alpha} \cdot t_\alpha , \\
         \hat{T}_\alpha = - b_\alpha \cdot \overline{n_\alpha}, &   \hat{N_\alpha} = -t_\alpha \cdot \overline{b_\alpha}, & \hat{B_\alpha} =  - n_\alpha \cdot \overline{t_\alpha}.
    \end{array}
    $$
\end{proposition}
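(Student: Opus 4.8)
The plan is to exploit that, at each $s$, the set $\{\alpha, t_\alpha, n_\alpha, b_\alpha\}$ is a positively oriented orthonormal basis of $\mathbb{R}^4 \cong \mathbb{H}$: the three Frenet vectors are orthonormal and tangent to $\mathbb{S}^3$, $\alpha$ is the unit normal $N(\alpha)=\alpha$, and $b_\alpha$ was chosen precisely so that $\det(\alpha, t_\alpha, n_\alpha, b_\alpha) > 0$. The only structural input I would invoke is that left multiplication $L_{\overline{\alpha}}\colon x \mapsto \overline{\alpha}\cdot x$ and right multiplication $R_{\overline{\alpha}}\colon x \mapsto x\cdot \overline{\alpha}$ by the unit quaternion $\overline{\alpha}$ are orientation-preserving linear isometries of $\mathbb{H}$ (they lie in the identity component of $SO(4)$, since $\mathbb{S}^3$ is connected and both reduce to the identity when $\alpha = e_1$), together with the product rule $x\cdot y = (0,\tilde{x}\times\tilde{y})$ for orthogonal purely imaginary quaternions recorded in Section~\ref{section_preliminary}, and the conjugation rule $\overline{x\cdot y}=\overline{y}\cdot\overline{x}$ from \eqref{eq_quater_properties}.

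For the left frame I would apply $L_{\overline{\alpha}}$ to $\{\alpha, t_\alpha, n_\alpha, b_\alpha\}$. Since $\overline{\alpha}\cdot\alpha = e_1$, the image is $\{e_1, T_\alpha, N_\alpha, B_\alpha\}$, again a positively oriented orthonormal basis; in particular $T_\alpha, N_\alpha, B_\alpha$ are orthogonal to $e_1$, hence purely imaginary, and form a positively oriented orthonormal basis of $\mathcal{S}$. By the cross-product rule this forces the multiplication table $T_\alpha\cdot N_\alpha = B_\alpha$, $N_\alpha\cdot B_\alpha = T_\alpha$, $B_\alpha\cdot T_\alpha = N_\alpha$, with $\overline{T_\alpha}=-T_\alpha$ and the attendant anticommutation relations. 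Each stated identity then follows by rewriting its right-hand side. For instance, inserting $\alpha\cdot\overline{\alpha}=e_1$ and using $\overline{b_\alpha}\cdot\alpha = \overline{\overline{\alpha}\cdot b_\alpha}=\overline{B_\alpha}$,
\[
\overline{b_\alpha}\cdot n_\alpha = (\overline{b_\alpha}\cdot\alpha)\cdot(\overline{\alpha}\cdot n_\alpha) = \overline{B_\alpha}\cdot N_\alpha = -\,B_\alpha\cdot N_\alpha = T_\alpha ,
\]
and the identities $N_\alpha = \overline{t_\alpha}\cdot b_\alpha$ and $B_\alpha = \overline{n_\alpha}\cdot t_\alpha$ drop out of the same computation with the indices cyclically permuted.

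For the right frame I would run the mirror argument with $R_{\overline{\alpha}}$. Setting the auxiliary frame $\widetilde{T}_\alpha = t_\alpha\cdot\overline{\alpha}$, $\widetilde{N}_\alpha = n_\alpha\cdot\overline{\alpha}$, $\widetilde{B}_\alpha = b_\alpha\cdot\overline{\alpha}$, the image of the Frenet basis under $R_{\overline{\alpha}}$ is $\{e_1, \widetilde{T}_\alpha, \widetilde{N}_\alpha, \widetilde{B}_\alpha\}$, so these three are once more a positively oriented orthonormal frame in $\mathcal{S}$ satisfying the same table. The key observation is that the hatted vectors are the conjugates of these: $\hat{T}_\alpha = \alpha\cdot\overline{t_\alpha} = \overline{t_\alpha\cdot\overline{\alpha}} = \overline{\widetilde{T}_\alpha} = -\widetilde{T}_\alpha$, and similarly $\hat{N}_\alpha = -\widetilde{N}_\alpha$, $\hat{B}_\alpha = -\widetilde{B}_\alpha$. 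Then, inserting $\overline{\alpha}\cdot\alpha = e_1$,
\[
b_\alpha\cdot\overline{n_\alpha} = (b_\alpha\cdot\overline{\alpha})\cdot(\alpha\cdot\overline{n_\alpha}) = \widetilde{B}_\alpha\cdot\hat{N}_\alpha = -\,\widetilde{B}_\alpha\cdot\widetilde{N}_\alpha = \widetilde{T}_\alpha = -\,\hat{T}_\alpha ,
\]
which is the claimed $\hat{T}_\alpha = -\,b_\alpha\cdot\overline{n_\alpha}$; the other two right-frame identities again follow by cyclic permutation.

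The substantive points, rather than the algebra, are twofold: first, establishing that $L_{\overline{\alpha}}$ and $R_{\overline{\alpha}}$ are orientation-preserving, so that the triple products come out with a plus sign ($uv=+w$ cyclically) rather than the opposite sign; and second, tracking the conjugations carefully, since conjugating a purely imaginary unit flips its sign. This sign flip is exactly what distinguishes the left frame (no extra sign) from the right frame (the global minus signs in $\hat{T}_\alpha,\hat{N}_\alpha,\hat{B}_\alpha$). Once these are in place, every identity reduces to one application of $\alpha\cdot\overline{\alpha}=e_1$ and one use of the quaternionic multiplication table.
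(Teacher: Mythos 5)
Your proof is correct, but it pins down the signs by a different mechanism than the paper. The paper's proof first uses the orthogonality properties ($x \perp y$ implies $x \cdot \overline{y} \in \mathcal{S}$ and $x \cdot \overline{y} = -y \cdot \overline{x}$) to show that $\left\{ \overline{n_\alpha}\cdot b_\alpha, \overline{b_\alpha}\cdot t_\alpha, \overline{t_\alpha}\cdot n_\alpha \right\}$ is an orthonormal subset of $\mathcal{S}$ and that each claimed identity holds up to a sign $\pm 1$; it then resolves the signs by normalization: up to a rigid motion one may assume $\alpha(s_0)=e_1$, $t_\alpha(s_0)=e_2$, $n_\alpha(s_0)=e_3$, $b_\alpha(s_0)=e_4$, and the identities are checked directly in that configuration. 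You instead transport the full frame $\{\alpha, t_\alpha, n_\alpha, b_\alpha\}$ by $L_{\overline{\alpha}}$ and $R_{\overline{\alpha}}$, prove these maps are orientation-preserving isometries via connectedness of $\mathbb{S}^3$, conclude that $(T_\alpha, N_\alpha, B_\alpha)$ and $(\widetilde{T}_\alpha, \widetilde{N}_\alpha, \widetilde{B}_\alpha)$ are positively oriented orthonormal triples in $\mathcal{S}$ satisfying the cyclic quaternionic multiplication table, and then obtain every identity by one insertion of $\alpha \cdot \overline{\alpha} = e_1$ together with associativity and the anticommutation rules. What your route buys is a fully self-contained sign determination: the paper's ``up to a rigid motion'' step implicitly requires that the inner products involved be invariant under the isometry used and that the normalizing isometry be realizable as a quaternionic translation $x \mapsto p \cdot x \cdot q$, details it leaves unstated, whereas your connectedness argument for $\det(L_{\overline{\alpha}}) = \det(R_{\overline{\alpha}}) = 1$ replaces them; moreover all six identities drop out of a single multiplication table, including a transparent explanation of why the right frame carries the global minus signs (conjugation of a purely imaginary unit flips its sign). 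What the paper's route buys is brevity and a concrete pointwise verification. Both arguments ultimately rest on the same two ingredients: the algebra of orthogonal purely imaginary unit quaternions and the orientation convention $\det(\alpha, t_\alpha, n_\alpha, b_\alpha) > 0$.
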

\begin{proof}
    Since $x \perp y $ implies in $x\cdot \overline{y} \in \mathcal{S}$,  $\langle \overline{x}, \overline{y} \rangle = 0$ and $x \cdot \overline{y} = - y \cdot \overline{x} $. Thus $\left\{ \overline{n_\alpha} \cdot b_\alpha, \overline{b_\alpha} \cdot t_\alpha,  \overline{t_\alpha} \cdot n_\alpha \right\} \subset \mathcal{S}$ is an orthonormal frame and  we have the following   
    $$\begin{array}{ccc}
    \langle \overline{\alpha} \cdot t_\alpha , \overline{b_\alpha} \cdot n_\alpha \rangle  = \pm 1, &  \langle \overline{\alpha} \cdot n_\alpha , \overline{t_\alpha} \cdot b_\alpha \rangle = \pm 1, &  \langle \overline{\alpha} \cdot b_\alpha , \overline{n_\alpha} \cdot t_\alpha \rangle =  \pm 1. 
    \end{array} $$
    All the other possible inner products vanish. 
        
    To determine the correct signs in the above products, note that the relations must hold for every configuration of the frame $ \left \{ t_\alpha, n_\alpha, b_\alpha \right\}$. Up to a rigid motion, we may assume that at a given point $s_0$ we have $\alpha(s_0)=e_1$, $t_\alpha(s_0)=e_2$, $n_\alpha(s_0)=e_3$ and $b_\alpha(s_0)=e_4$. In this case, it follows that  $T_\alpha = e_2$ and $\overline{b}_{\alpha} \cdot n_{\alpha} =e_2$. The other cases are similar. The other cases are analogous. Proceeding in this manner, and applying the same reasoning to the other possible vectors formed by $\alpha, t_\alpha, n_\alpha, b_\alpha$ and their products, we obtain
    \begin{center}
    \begin{tabular}{|c|c|c|c|}
    \hline
      $\langle \cdot , \cdot \rangle$  & $\overline{b}_\alpha \cdot  n_\alpha$  &  $\overline{t}_\alpha \cdot b_\alpha$ & $\overline{n}_\alpha \cdot t_\alpha$ \\
      \hline
      $T_\alpha $ &  $1$ &  0 & 0 \\
      $N_\alpha$ & 0 &  $ 1$ & $ 0$ \\
      $B_\alpha$ & 0 &  $ 0 $ & $ 1$ \\
      \hline 
    \end{tabular}
    \end{center}
    The procedure for the frame $\{\hat{T}_\alpha, \hat{N}_\alpha, \hat{B}_\alpha \}$ is analogous.
\end{proof}

Using the equations \eqref{eq_curv_alpha} we derive the following Frenet-Serret type equations for the quaternionic frame of $\alpha$:
\begin{equation}\label{propertiesalpha}
\begin{cases}\begin{array}{rccl}
T_\alpha' = & \overline{t_\alpha} \cdot t_\alpha + \overline{\alpha} \cdot (\kappa_\alpha n_\alpha -\alpha) & = & \kappa_\alpha N_\alpha,  \\
N_\alpha' = & \overline{t_\alpha} \cdot n_\alpha + \overline{\alpha} \cdot (-\kappa_\alpha t_\alpha +\tau_\alpha b_\alpha) & = & - \kappa_\alpha T_\alpha + (\tau_\alpha - 1 )B_\alpha , \\
B_\alpha' = & \overline{t_\alpha} \cdot b_\alpha + \overline{\alpha} \cdot (- \tau_\alpha n_\alpha) & = &  - ( \tau_\alpha - 1 )N_\alpha .  
\end{array}
\end{cases} 
\end{equation}
Moreover, for the frame $\{\hat{T}_\alpha, \hat{N}_\alpha, \hat{B}_\alpha \}$, we have 
\begin{equation}\label{propertiesbeta}
\begin{cases}\begin{array}{rccl}
\hat{T}_\alpha' = & t_\alpha \cdot \overline{t_\alpha} + \alpha \cdot \overline{(\kappa_\alpha n_\alpha - \alpha) } & = & \kappa_\alpha \hat{N}_\alpha  , \\
\hat{N}_\alpha' = & t_\alpha \cdot \overline{n_\alpha} + \alpha \cdot \overline{(-\kappa_\alpha t_\alpha + \tau_\alpha b_\alpha)} & = & - \kappa_\alpha \hat{T}_\alpha + (\tau_\alpha + 1 ) \hat{B}_\alpha ,  \\
\hat{B}_\alpha' = & t_\alpha \cdot \overline{b_\alpha} + \alpha \cdot \overline{(- \tau_\alpha n_\alpha)} & = & - (\tau_\alpha + 1 ) \hat{N}_\alpha  .
\end{array}\end{cases}
\end{equation}

\subsection{Geometry of translation surfaces in \texorpdfstring{$\mathbb{S}^3$}{TEXT}} Let  $\alpha: I \subset \mathbb{R} \rightarrow \mathbb{S}^3$, $\alpha(s)$ and $\beta: J \subset \mathbb{R}  \rightarrow \mathbb{S}^3$, $\beta(t)$ be two arc length curves. Consider the map 
$$\begin{array}{rccl}
     X : &  I \times J & \rightarrow  & \mathbb{S}^3 \\
        & (s,t) & \mapsto & \alpha(s)\cdot \beta(t)
\end{array} $$
Since $\partial_s X(s,t) = \alpha'(s)\cdot \beta(t)$ and $\partial_t X(s,t) = \alpha(s)\cdot \beta'(t)$ are non-null vectors, the condition for $X$ to be a regular parametrization of a surface in $ \mathbb{S}^3$ is  $\langle \alpha'(s)\cdot \beta(t), \alpha(s)\cdot \beta'(t) \rangle \neq \pm 1 $.

Let $X : I \times J \rightarrow  \mathbb{S}^3$, $X(s,t)=\alpha(s)\cdot \beta(t)$ be a parametrization of a translation surface. From now on, we will always use the left frame for the curve $\alpha$ and the right frame for the curve $\beta$. To simplify the notation, the structure of the following computations will allow us to denote the right frame $\{\hat{T}_\beta, \hat{N}_\beta, \hat{B}_\beta \}$ of the curve $\beta$ as $\{T_\beta, N_\beta, B_\beta \}$ without risk of confusion. Moreover, the parameters $s$ and $t$ will be omitted throughout the calculations to make the presentation clearer and more pleasant for the reader. Also, from now on, we will always assume that $\alpha$ and $\beta$ are parametrized by the arc length.

Furthermore, throughout this work, the results are stated for $\alpha(s) \cdot \beta(t)$ but also hold for $\beta(t) \cdot \alpha(s)$, unless said otherwise. In particular, for the results involving $\tau_\alpha = 1 $,  the corresponding statements hold with the roles of $\alpha$ and $\beta$ interchanged, but with $\tau_\beta = -1$.

Now we present the following
\begin{theorem}\label{theo_prop_surface_general}
    Let $X : I \times J \rightarrow  \mathbb{S}^3$ ,  $X(s,t)=\alpha(s)\cdot \beta(t)$, be a parametrization of a translation surface. Then the regularity condition is given by
\begin{equation}
    \langle T_\alpha, T_\beta \rangle \neq \pm 1. \label{regularity}
\end{equation}
    The unit normal field at $X(s,t)$ in $\mathbb{S}^3$ is  
    \begin{equation}\label{GeneralNormal}
        N(s,t) =  \dfrac{\alpha'(s) \cdot \beta'(t) - \langle T_\alpha(s) , T_\beta(t) \rangle \ \alpha(s)\cdot\beta(t)}{\sqrt{1 - \langle T_\alpha(s) , T_\beta(t) \rangle^2}} .
    \end{equation}
    The mean curvature is given by 
    \begin{equation}\label{Meancurvature}
        H  =  \dfrac{\kappa_{\alpha} \langle B_{\alpha} , T_\beta \rangle - \kappa_{\beta} \langle T_\alpha, B_{\beta}  \rangle - 2 \langle T_\alpha , T_\beta \rangle [ \langle T_\alpha , T_\beta \rangle^2 - 1] }{2[ 1 - \langle T_\alpha , T_\beta \rangle^2 ]^{3/2}}.
    \end{equation}
    Similarly, the Gaussian curvature is expressed as
    \begin{equation}\label{Gaussiancurvature}
         K = \frac{\kappa_\alpha\kappa_\beta\langle B_\alpha , T_\beta \rangle \langle T_\alpha, B_\beta \rangle }{(1 - \langle T_\alpha , T_\beta \rangle^2)^2}.
    \end{equation}
\end{theorem}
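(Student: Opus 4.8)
The plan is to read off both fundamental forms of $X$ directly from the quaternionic frames, exploiting that left and right multiplication by unit quaternions are isometries (properties 2 and 3 of \eqref{eq_quater_properties}) to reduce every ambient inner product in $\mathbb{R}^4$ to an inner product of frame vectors lying in $\mathcal{S}$. Write $c = \langle T_\alpha, T_\beta \rangle$, and set $W = \alpha' \cdot \beta' = t_\alpha \cdot t_\beta$ and $X = \alpha \cdot \beta$. First I would record the first fundamental form: since $\partial_s X = t_\alpha \cdot \beta$ and $\partial_t X = \alpha \cdot t_\beta$, property 3 gives $|\partial_s X| = |\partial_t X| = 1$, so $E = G = 1$. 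For $F = \langle \partial_s X, \partial_t X \rangle$ I would left-multiply both entries by $\overline\alpha$ and right-multiply by $\overline{t_\beta}$ (isometries), collapsing the expression to $\langle T_\alpha \cdot T_\beta, e_1 \rangle$; since $T_\alpha, T_\beta \in \mathcal{S}$ their product has real part $-\langle T_\alpha, T_\beta \rangle$ (the product of two purely imaginary units is $uv = -\langle u, v\rangle + u \times v$), giving $F = -c$. As $\partial_s X$ and $\partial_t X$ are unit vectors, they are dependent exactly when $|F| = 1$, which is the regularity criterion \eqref{regularity}.

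Next I would verify \eqref{GeneralNormal} by checking that the proposed field $N = (W - cX)/\sqrt{1 - c^2}$ is a unit vector orthogonal to $X$ (the outward sphere normal, since $N(p) = p$), to $\partial_s X$, and to $\partial_t X$. The two key identities, both obtained by the same multiply-by-an-isometry reduction, are $\langle W, X \rangle = c$ and $\langle W, \partial_s X \rangle = \langle W, \partial_t X \rangle = 0$, together with $\langle X, \partial_s X \rangle = \langle X, \partial_t X \rangle = 0$ (arc length) and $|W|^2 = 1$. These yield $\langle N, X \rangle = \langle N, \partial_s X \rangle = \langle N, \partial_t X \rangle = 0$ and $|N|^2 = (1 - 2c^2 + c^2)/(1 - c^2) = 1$, so $N$ is the unit normal of the surface inside $\mathbb{S}^3$.

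For the curvatures I would compute the second fundamental form from the ambient second derivatives. Using \eqref{eq_curv_alpha}, $\partial_{ss} X = (\kappa_\alpha n_\alpha - \alpha) \cdot \beta$, $\partial_{st} X = t_\alpha \cdot t_\beta = W$ and $\partial_{tt} X = \alpha \cdot (\kappa_\beta n_\beta - \beta)$. Because $\langle N, X \rangle = 0$, the component of each of these along $N$ coincides with the second fundamental form of the surface inside $\mathbb{S}^3$, so $e = \langle \partial_{ss} X, N \rangle$, $f = \langle \partial_{st} X, N \rangle$, $g = \langle \partial_{tt} X, N \rangle$. Here Proposition \ref{prop:frames} is essential: it lets me rewrite $\overline{t_\alpha} \cdot n_\alpha = \overline{B_\alpha} = -B_\alpha$ and $n_\beta \cdot \overline{t_\beta} = -\hat{B}_\beta = -B_\beta$, after which the same isometry reductions give $f = \sqrt{1 - c^2}$, $e = \kappa_\alpha \langle B_\alpha, T_\beta \rangle / \sqrt{1 - c^2}$ and $g = -\kappa_\beta \langle T_\alpha, B_\beta \rangle / \sqrt{1 - c^2}$, the sign asymmetry reflecting the use of the left frame for $\alpha$ and the right frame for $\beta$.

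Finally I would assemble the results. Substituting into $H = (eG - 2fF + gE)/[2(EG - F^2)]$ with $E = G = 1$ and $F = -c$, and collecting over $\sqrt{1 - c^2}$, produces \eqref{Meancurvature}, the cross term $-2fF = 2c\sqrt{1 - c^2}$ supplying the $-2\langle T_\alpha, T_\beta \rangle [\langle T_\alpha, T_\beta \rangle^2 - 1]$ contribution. For \eqref{Gaussiancurvature} I would use the Gauss equation $K = 1 + (eg - f^2)/(EG - F^2)$ for a surface in the unit sphere: the term $-f^2/(EG - F^2) = -1$ cancels the ambient $+1$, and $eg/(EG - F^2)$ delivers the product $\kappa_\alpha \kappa_\beta \langle B_\alpha, T_\beta \rangle \langle T_\alpha, B_\beta \rangle / (1 - c^2)^2$. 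The main obstacle throughout is purely organizational rather than conceptual: every inner product must be pushed through the correct sequence of left/right unit-quaternion multiplications and then matched against Proposition \ref{prop:frames}, and the careful bookkeeping of signs — especially keeping the left-frame/right-frame asymmetry straight — is where the computation is most error-prone.
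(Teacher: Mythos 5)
Your route is essentially the paper's own: compute $E,F,G$ and $e,f,g$ by pushing each $\mathbb{R}^4$ inner product through left/right multiplications by unit quaternions, identify the resulting products via Proposition \ref{prop:frames}, and feed everything into the standard formulas for $H$ and $K$. On the first fundamental form you are in fact more careful than the paper: writing $c=\langle T_\alpha,T_\beta\rangle$, the correct value is $F=-c$, exactly as you argue (for purely imaginary unit quaternions the real part of the product is $-\langle u,v\rangle$), whereas \eqref{firstfundamnetalform} records $F=+\langle T_\alpha,T_\beta\rangle$. Your sign can be confirmed on Example \ref{ex_flat_torus}: there $\langle T_\alpha,T_\beta\rangle=-(R_1^2-R_2^2)$ while a direct computation of $\langle X_s,X_t\rangle$ gives $+(R_1^2-R_2^2)$. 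Moreover, your sign is the one that actually produces the term $-2\langle T_\alpha,T_\beta\rangle[\langle T_\alpha,T_\beta\rangle^2-1]$ in \eqref{Meancurvature}: with $F=+c$ the cross term $-2fF$ would flip and contradict the stated formula. Your treatment of \eqref{regularity}, \eqref{GeneralNormal}, of the coefficients $e,f,g$, and of \eqref{Meancurvature} is correct.

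The genuine problem is the final step for the Gaussian curvature. With your own (correct) values $e=\kappa_\alpha\langle B_\alpha,T_\beta\rangle/\sqrt{1-c^2}$ and $g=-\kappa_\beta\langle T_\alpha,B_\beta\rangle/\sqrt{1-c^2}$, one has
\[
\frac{eg}{EG-F^2}=-\,\frac{\kappa_\alpha\kappa_\beta\langle B_\alpha,T_\beta\rangle\langle T_\alpha,B_\beta\rangle}{\bigl(1-\langle T_\alpha,T_\beta\rangle^2\bigr)^2},
\]
so the Gauss equation $K=1+(eg-f^2)/(EG-F^2)$ yields $K=-\kappa_\alpha\kappa_\beta\langle B_\alpha,T_\beta\rangle\langle T_\alpha,B_\beta\rangle/(1-\langle T_\alpha,T_\beta\rangle^2)^2$, the \emph{negative} of \eqref{Gaussiancurvature}; your claim that $eg/(EG-F^2)$ ``delivers'' the positive product silently drops the minus sign carried by $g$. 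To be fair, this discrepancy exists inside the paper as well: its intermediate equation \eqref{curvextsimp} carries the minus sign (and, incidentally, the wrong power of the denominator), while the theorem statement \eqref{Gaussiancurvature} does not, and the computation of $\tilde K$ in Section \ref{SectionR3} again produces the minus sign. So the honest conclusion of your computation is that \eqref{Gaussiancurvature} holds only up to an overall sign; as written, your proposal claims to establish the stated formula but actually establishes its negative, and you should either carry the sign through or explicitly flag the inconsistency. (No later use of $K$ in the paper is affected, since only the condition $K=0$ is ever invoked.)
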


\begin{proof} 
Initially, with equations \eqref{propertiesalpha} and \eqref{propertiesbeta}, we compute the coefficients of the first fundamental form 
\begin{equation}\label{firstfundamnetalform}
    \begin{array}{rllll}
    E & = \langle X_s, X_s \rangle & =  \langle \alpha'\cdot\beta , \alpha'\cdot\beta \rangle & = 1 , \\ 
    G & = \langle X_t, X_t \rangle & =  \langle \alpha\cdot\beta' , \alpha\cdot\beta' \rangle & = 1 , \\ 
    F & = \langle X_s, X_t \rangle & =  \langle \alpha'\cdot\beta , \alpha\cdot\beta' \rangle & = \langle T_\alpha , T_\beta \rangle .
    \end{array}
\end{equation}
Set $Y(s,t) = \alpha'(s) \cdot \beta'(t)$. Hence 
$$\begin{array}{ccccccc}
     \langle X_s(s,t),Y(s,t) \rangle & = & \langle \alpha'(s) \cdot \beta(t), \alpha'(s) \cdot \beta'(t) \rangle & = & \langle \beta(t) , \beta'(t) \rangle & = & 0 , \\
     \langle X_t(s,t),Y(s,t) \rangle & = & \langle \alpha(s) \cdot \beta'(t), \alpha'(s) \cdot \beta'(t) \rangle & = & \langle \alpha(s) , \alpha'(s) \rangle & = & 0 .
\end{array} $$
Thus,  $Y(s,t)$ is orthogonal to $X_s$ e $X_t$, for every $s \in I, t \in J$. Also, by a similar argument, $X(s,t)$ is also orthogonal to $X_s$ and $X_t$, for every $s \in I, t \in J$. Thus, $X$ and $Y$ are contained in a plane that is at the same time orthogonal to $X_s$ and  $X_t$ in $\mathbb{R}^4$. 

Let $N(s,t)$ be the unit normal field at $X(s,t)$ in $\mathbb{S}^3 \subset \mathbb{R}^4$ that is at the same time orthogonal to $X$, $X_s$ and $X_t$. Thus $N = a X + b Y$, with $a^2 + b^2 + 2 ab \langle X, Y \rangle  = 1$ and $\langle N, X \rangle = a + b \langle X, Y \rangle = 0$. Then $a = - b \langle X, Y \rangle $, which implies that 
$$ b^2( 1 + \langle X, Y \rangle^2) - 2 b^2 \langle X, Y \rangle^2 = b^2( 1 - \langle X, Y \rangle^2) = 1 . $$
Since we may choose $b = 1 / \sqrt{1 - \langle X, Y \rangle^2}$, and as $\langle X, Y \rangle  = \langle \alpha \cdot \beta , \alpha' \cdot \beta' \rangle = \langle\overline{\alpha} \cdot \alpha' , \beta \cdot \overline{\beta'} \rangle $, we get  
\begin{equation*}
N(s,t) =  \dfrac{\alpha'(s) \cdot \beta'(t) - \langle T_\alpha(s) , T_\beta(t) \rangle \ \alpha(s)\cdot\beta(t)}{\sqrt{1 - \langle T_\alpha(s) , T_\beta(t) \rangle^2}} .
\end{equation*}

Let $\nabla$ and $\tilde{\nabla}$ be the Levi-Civita connections in 
 $\mathbb{S}^3$ and $\mathbb{R}^4$ respectively. Since $p = X(s_0,t_0)$ is orthogonal to the surface $T_p X $ for every $s$ and $t$ (as the surface is contained in $\mathbb{S}^3 \subset\mathbb{R}^4$), and $\tilde{\nabla}$ is known to be equivalent to the usual differentiation, we have 
 $$ X_{ss} = \nabla_{X_s}X_s + \langle X_{ss} , X \rangle X, \ \ \ 
     X_{st} = \nabla_{X_s}X_t + \langle X_{st} , X \rangle X,  \ \ \ 
     X_{tt} = \nabla_{X_t}X_t + \langle X_{tt} , X \rangle X.  
$$
With these equations, we compute the coefficients of the second fundamental  form of the surface $X(s,t)$ as 
$$ e = \langle X_{ss}, N \rangle,  \ \ g = \langle X_{tt}, N \rangle , \ \ f = \langle X_{st}, N \rangle $$

Remembering that $\langle N , X \rangle = 0 $ and $ \langle \alpha \cdot \beta, \alpha'' \cdot \beta\rangle = -1 $, we begin computing the coefficients of the second fundamental form
$$  e = \langle N, \nabla_{X_s}X_s \rangle  =  \dfrac{\langle \alpha'\cdot\beta' - \langle \alpha'\cdot\beta' , \alpha\cdot\beta \rangle \alpha\cdot\beta, \alpha''\cdot \beta \rangle}{\sqrt{1 - \langle \alpha'\cdot\beta' , \alpha\cdot\beta \rangle^2}} = \dfrac{\langle \alpha'\cdot\beta', \alpha''\cdot \beta +  \alpha\cdot\beta \rangle}{\sqrt{1 - \langle \alpha'\cdot\beta' , \alpha\cdot\beta \rangle^2}}. $$
Here, if $\kappa_\alpha \equiv 0$, then $\alpha'' = -\alpha$ and $e = 0$. Symmetrically we have 
$$ g = \langle N, \nabla_{X_t}X_t \rangle = \dfrac{\langle \alpha'\cdot\beta', \alpha\cdot \beta'' +  \alpha\cdot\beta \rangle}{\sqrt{1 - \langle \alpha'\cdot\beta' , \alpha\cdot\beta \rangle^2}}. $$
Again, if $\kappa_\beta \equiv 0$, then $\beta'' = -\beta$ and $g = 0$. Also we have   
$$ f = \langle N, \nabla_{X_t}X_s \rangle = \dfrac{\langle \alpha'\cdot\beta' - \langle \alpha'\cdot\beta' , \alpha\cdot\beta \rangle \alpha\cdot\beta, \alpha' \cdot \beta' \rangle}{\sqrt{1 - \langle \alpha'\cdot\beta' , \alpha\cdot\beta \rangle^2}} = \sqrt{1 - \langle \alpha'\cdot\beta' , \alpha\cdot\beta \rangle^2}. $$
In case $\kappa_\alpha \not\equiv 0$ and $\kappa_\beta \not\equiv 0 $, it follows from the first system in \eqref{eq_curv_alpha}, Definition \ref{def:frames} and Proposition \ref{prop:frames} that the coefficients of the second fundamental form can be written as 
\begin{equation}\label{secondfundamnetalform}
    e = \dfrac{\kappa_\alpha \langle B_\alpha , T_\beta \rangle}{\sqrt{1 - \langle  T_\alpha , T_\beta \rangle }},  \ \ \  g  = -  \dfrac{\kappa_\beta \langle T_\alpha , B_\beta \rangle}{\sqrt{1 - \langle T_\alpha , T_\beta   \rangle }}, \ \ \  f = \sqrt{1 - \langle T_\alpha , T_\beta \rangle^2 } .
\end{equation}

We use the usual mean curvature formula to obtain  
\begin{equation*}
H = \dfrac{1}{2} \dfrac{eG - 2 fF + Eg }{EG - F^2} 
=  \dfrac{\kappa_{\alpha} \langle B_{\alpha} , T_\beta \rangle - \kappa_{\beta} \langle T_\alpha, B_{\beta}  \rangle - 2 \langle T_\alpha , T_\beta \rangle [ \langle T_\alpha , T_\beta \rangle^2 - 1] }{2[ 1 - \langle T_\alpha , T_\beta \rangle^2 ]^{3/2}} . 
\end{equation*}

In order to obtain the Gaussian curvature we must compute the extrinsic curvature by the classical equation  $K_{ext} = \dfrac{eg - f^2}{EG - F^2}$, that is 
\begin{equation}\label{curvextsimp}
 K_{ext} = - \frac{\kappa_\alpha\kappa_\beta\langle B_\alpha , T_\beta \rangle \langle T_\alpha, B_\beta \rangle }{1 - \langle T_\alpha , T_\beta \rangle^2} - 1 . 
\end{equation}
Thus, the Gaussian curvature is given by $K = K_{ext} + 1$. 

\end{proof}

\begin{remark} 
    From Theorem \ref{theo_prop_surface_general} we obtain some important equations that will be useful throughout this work. A translation surface is minimal if and only if 
    \begin{equation}\label{eqmincurvatsimp}
      \kappa_{\alpha} \langle B_\alpha , T_\beta  \rangle -  \kappa_{\beta} \langle  T_\alpha,  B_\beta \rangle =  2 \langle T_\alpha, T_\beta \rangle [ \langle T_\alpha, T_\beta \rangle^2 - 1 ] . 
    \end{equation}
    Furthermore, a translation surface is flat if and only if 
    \begin{equation}\label{eqflat}
    \kappa_\alpha\kappa_\beta\langle B_\alpha , T_\beta \rangle \langle T_\alpha, B_\beta \rangle = 0 .
    \end{equation}
\end{remark}

\section{Translation Surfaces with Constant Gaussian Curvature: Revisiting the Flat Case}\label{SectionFlat}

It is well-known from the Bianchi-Spivak construction \cite{Galvez, Spivack4} that every flat surface in $\mathbb{S}^3$ can be locally recovered as the quaternionic product of two curves in $\mathbb{S}^3$. In other words, every flat surface is locally a translation surface. On the other hand, the only translation surfaces in $\mathbb{S}^3$ with constant Gaussian curvature are the flat ones. This is the content of the following recent result:
 \begin{proposition}[\cite{XuHan}]\label{Theorem_Kconst_flat}
     Let $G$ be an $n$-dimensional ($n \geq 3$) Lie group with a bi-invariant metric, and $M$ be a translation surface in $G$ with constant Gaussian curvature, then $M$ must be flat.
 \end{proposition}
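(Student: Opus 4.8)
The plan is to reduce the statement to a single scalar identity for the Gaussian curvature and then to show that constancy over-determines it. Fix a left-invariant orthonormal frame $\{E_1,\dots,E_n\}$ of the Lie algebra $\mathfrak{g}=T_eG$; by bi-invariance the structure constants $\langle[E_i,E_j],E_k\rangle$ are totally antisymmetric and the Levi-Civita connection satisfies $\nabla_{E_i}E_j=\tfrac12[E_i,E_j]$. Writing $M$ locally as $X(s,t)=\alpha(s)\cdot\beta(t)$ and transporting $X_s,X_t$ into $\mathfrak{g}$ by left translation, I would first record that the first fundamental form has $E=\langle X_s,X_s\rangle$ depending only on $s$ and $G=\langle X_t,X_t\rangle$ depending only on $t$, so the coordinate net is a Chebyshev net; after reparametrizing $\alpha,\beta$ by arc length one has $E=G=1$ and $F=\langle X_s,X_t\rangle=\langle a,\hat b\rangle$, where $a$ and $\hat b$ are the left- and right-logarithmic derivatives of $\alpha$ and $\beta$, both unit vectors in $\mathfrak{g}$.

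The key structural point is the behaviour of $\nabla_{X_s}X_t$. Using $\nabla_{E_i}E_j=\tfrac12[E_i,E_j]$ together with the antisymmetry of the structure constants, the tangential part of $\nabla_{X_s}X_t$ vanishes and $\nabla_{X_s}X_t=\tfrac12[X_s,X_t]$ (left-invariant extensions) is \emph{normal} to $M$, because $[X,Y]\perp X,Y$ for a bi-invariant metric. Feeding this into the Gauss equation produces an exact cancellation: the ambient sectional-curvature term $\langle\bar R(X_s,X_t)X_t,X_s\rangle=\tfrac14|[X_s,X_t]|^2$ is precisely $|II(X_s,X_t)|^2$, so $K(EG-F^2)=\langle II(X_s,X_s),II(X_t,X_t)\rangle$. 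Computing the tangential parts of $\nabla_{X_s}X_s$ and $\nabla_{X_t}X_t$ from the relations $\langle a,a'\rangle=\langle\hat b,\hat b'\rangle=0$, this collapses to the closed formula
$$ K=\frac{F_{st}\,(1-F^2)+F\,F_s\,F_t}{(1-F^2)^2},\qquad F_s=\langle a',\hat b\rangle,\ F_t=\langle a,\hat b'\rangle,\ F_{st}=\langle a',\hat b'\rangle . $$
Equivalently, setting $F=\cos\omega$, this is the sine-Gordon equation $\omega_{st}=-K\sin\omega$, in which $\omega(s,t)$ is exactly the spherical distance in the unit sphere of $\mathfrak{g}$ between the points $a(s)$ and $\hat b(t)$.

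It remains to show that $K$ constant forces $K=0$. If $F$ is constant the formula gives $K=0$ at once, so I would assume $F$ nonconstant and argue that the \emph{separable} form $F=\langle a(s),\hat b(t)\rangle$ -- a sum of only $n$ products of one-variable functions -- cannot sustain a nonzero constant-curvature profile. Concretely, I would differentiate the identity $K(1-F^2)^2=F_{st}(1-F^2)+FF_sF_t$ repeatedly in $s$ and $t$, eliminating the higher derivatives of $a,\hat b$ through the sphere relations $\langle a,a''\rangle=-|a'|^2$, $\langle\hat b,\hat b''\rangle=-|\hat b'|^2$, and so on, to obtain an over-determined system consistent only when $K=0$. \textbf{The main obstacle is precisely this last rigidity step:} the sine-Gordon equation alone admits constant-curvature solutions for every value of $K$, so the argument must genuinely exploit that $\omega$ is realized as the spherical distance between two curves (equivalently, the finite-rank separability of $F$). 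Converting this qualitative incompatibility into a clean contradiction -- rather than the preceding reduction, which is a direct, if careful, computation -- is where the real work lies.
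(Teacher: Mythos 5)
First, a point of reference: the paper does not prove this proposition at all --- it is imported verbatim from \cite{XuHan} --- so there is no internal argument to compare your proposal against; it has to stand on its own. Your reduction, to be clear, is correct and well executed. Since $X_t$ restricted to each $s$-coordinate curve is the restriction of a left-invariant field (and symmetrically for $X_s$), one indeed gets $\nabla_{X_s}X_t=\tfrac12[X_s,X_t]$, which is normal to the surface by total antisymmetry of the structure constants; the Gauss equation then cancels the ambient term $\tfrac14|[X_s,X_t]|^2$ against $|II(X_s,X_t)|^2$, leaving $K(1-F^2)=\langle II(X_s,X_s),II(X_t,X_t)\rangle$, and computing the tangential parts of $\nabla_{X_s}X_s$, $\nabla_{X_t}X_t$ gives exactly your identity $K(1-F^2)^2=F_{st}(1-F^2)+FF_sF_t$, i.e.\ the sine-Gordon form $\omega_{st}=-K\sin\omega$ for $F=\cos\omega$. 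This is the Lie-group analogue of what Theorem \ref{theo_prop_surface_general} establishes in $\mathbb{S}^3$, where the same cancellation is visible in $K=K_{ext}+1$ with $f^2=1-F^2$.

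The problem is that the proposition is not the formula; it is the rigidity statement, and that is exactly what is missing. Your argument settles only the trivial case ($F$ constant gives $K=0$). For nonconstant $F$ you propose to ``differentiate repeatedly'' and obtain an over-determined system ``consistent only when $K=0$'', but no such system is written down and no contradiction is derived; and, as you yourself observe, nothing short of genuinely using the finite-rank constraint $F(s,t)=\langle a(s),\hat b(t)\rangle$ can work, because the Chebyshev/sine-Gordon equation by itself admits constant-curvature solutions for every $K$ (this is precisely how pseudospherical surfaces are classically built from Chebyshev nets). So the proposal is an accurate and useful reformulation of the proposition, not a proof of it: the step you defer --- showing that a function of the separable form $\langle a(s),\hat b(t)\rangle$, with $a,\hat b$ curves on the unit sphere of the Lie algebra, cannot satisfy $\omega_{st}=-K\sin\omega$ with $K$ a nonzero constant --- is the entire content of the result being claimed. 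To close the gap you would need to actually run the elimination (for instance, fix $t=t_0$, rewrite the identity as a relation among $\langle a,u\rangle,\langle a',u\rangle,\langle a',v\rangle,\dots$ with $u=\hat b(t_0)$, $v=\hat b'(t_0)$, differentiate in $t$ to bring in higher derivatives of $\hat b$, and show the resulting hierarchy is inconsistent unless $K=0$), or else simply cite \cite{XuHan} as the paper does.
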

 This means that, since $\mathbb{S}^3$ is embedded in $ \mathbb{R}^4$ with the usual metric induced by the four-dimensional Euclidean space, which is a bi-invariant metric, the classification of translation surfaces with constant Gaussian curvature is reduced to the flat case.

 A direct consequence of this result is the non-existence of totally umbilic and totally geodesic translation surfaces in $\mathbb{S}^3$. In particular, the question of whether totally geodesic spheres are minimal translation surfaces is natural due to the fact that their analogues in $\mathbb{R}^3$, i.e. the planes, provide trivial examples of such surfaces. In this context, we present the following
\begin{theorem}\label{TeoTotalGeo}
     There is no totally umbilic surfaces or totally geodesic surfaces in $\mathbb{S}^3$ given as a translation surface.
\end{theorem}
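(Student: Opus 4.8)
The plan is to combine the rigidity statement of Proposition \ref{Theorem_Kconst_flat} with the Gauss equation $K = K_{ext}+1$ established in the proof of Theorem \ref{theo_prop_surface_general}. First I would recall the extrinsic geometry of an umbilic surface: if $M \subset \mathbb{S}^3$ is totally umbilic, then its shape operator has the form $S = \lambda\,\mathrm{Id}$, and since $\mathbb{S}^3$ is a space form, the Codazzi equation forces the umbilic factor $\lambda$ to be (locally) constant. Consequently the extrinsic curvature $K_{ext} = \det S = \lambda^2$ is constant, and the Gauss equation yields
\[ K = 1 + \lambda^2 \geq 1 > 0 , \]
so $M$ has constant Gaussian curvature. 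The totally geodesic case is recovered by setting $\lambda \equiv 0$, which gives $K \equiv 1$, and is therefore subsumed in the same argument.

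Now I would argue by contradiction. Suppose such an $M$ can be written as a translation surface $\alpha \cdot \beta$. Being a translation surface of constant Gaussian curvature, Proposition \ref{Theorem_Kconst_flat} forces $M$ to be flat, i.e. $K \equiv 0$. This is incompatible with $K = 1 + \lambda^2 \geq 1$, and this contradiction proves the theorem. No further case analysis on $\lambda$ is needed beyond the observation that $\lambda^2 \geq 0$.

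The only delicate point, and the step I expect to be most easily overlooked, is the constancy of $\lambda$ (and hence of $K$), which must be secured before Proposition \ref{Theorem_Kconst_flat} can be invoked; this is the classical fact that the umbilic factor of a connected umbilic hypersurface in a space form is constant, obtained via Codazzi. For the totally geodesic subcase one can dispense with this machinery entirely and argue directly from the explicit second fundamental form: by \eqref{secondfundamnetalform} the off-diagonal coefficient is $f = \sqrt{1 - \langle T_\alpha , T_\beta \rangle^2}$, which is strictly positive because the regularity condition \eqref{regularity} gives $\langle T_\alpha , T_\beta \rangle \neq \pm 1$. Thus $f$ can never vanish on a regular translation surface, so the second fundamental form cannot be identically zero, ruling out totally geodesic translation surfaces without appealing to the rigidity result. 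The substantive case is therefore the umbilic one, where the passage from pointwise positivity of $K$ to its constancy is what makes Proposition \ref{Theorem_Kconst_flat} applicable.
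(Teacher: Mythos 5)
Your proof is correct and takes essentially the same route as the paper: constancy of the umbilic factor $\lambda$, the Gauss equation $K = \lambda^2 + 1 \geq 1$, and Proposition \ref{Theorem_Kconst_flat} forcing $K \equiv 0$, a contradiction. Your two additions — justifying the constancy of $\lambda$ via Codazzi (which the paper merely asserts) and the elementary observation that $f = \sqrt{1 - \langle T_\alpha , T_\beta \rangle^2} > 0$ by \eqref{regularity} already rules out the totally geodesic case — strengthen the exposition but do not change the argument.
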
 
\begin{proof}
    Since a totally umbilic or totally geodesic surface has constant principal curvatures equal to $\lambda \in \mathbb{R}$, its Gaussian curvature $K$ is constant by the Gauss equation, which reads
    $$   K =   \lambda^2 + 1 .     $$
    However, by proposition \ref{Theorem_Kconst_flat}, $K$ must be zero, which contradicts the relation above.
\end{proof}

It is also a consequence of the Gauss Equation that flat surfaces in $\mathbb{S}^3$ have negative extrinsic curvature $K_{ext} = \lambda_1 \lambda_2$, where $\lambda_1$ and $\lambda_2$ denote the principal curvatures. In this context, we have the following well-known result:
\begin{theorem}[\cite{Galvez}]\label{TeoGalvez}
    Let $\Sigma$ be a surface and $\psi : \Sigma \to \mathbb{M}^3 (c)$ an immersion with negative constant extrinsic curvature $K_{ext}$ in a space form. Then the asymptotic curves of $\psi$ have constant torsion
    $\tau$, with $\tau^2 = -K_{ext}$ at points where the curvature of the curve does not vanish. Moreover, two
    asymptotic curves through a point have torsions of opposite signs if they have non-vanishing
    curvature at that point.
\end{theorem}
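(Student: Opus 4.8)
The plan is to reduce the statement to the classical Beltrami--Enneper argument, carried out with the Frenet--Serret equations of $\mathbb{M}^3(c)$ (which use the ambient Levi--Civita connection $\nabla$) rather than those of $\mathbb{R}^3$; the point is that the space-form constant $c$ never enters the relevant computation, which involves only the shape operator. Let $\gamma$ be an asymptotic curve, parametrized by arc length, along which its Frenet curvature $\kappa$ does not vanish, and let $\{t,n,b\}$ be its Frenet frame and $N$ the unit normal of $\psi(\Sigma)$. By definition an asymptotic direction is a null direction of the second fundamental form, so the normal curvature vanishes: $\langle \nabla_t t, N\rangle = \kappa\langle n, N\rangle = 0$. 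Since $\kappa\neq 0$, the principal normal $n$ is tangent to the surface; as $N\perp t$ and $N\in\mathrm{span}\{t,b\}$, this forces $b = \varepsilon N$ for some sign $\varepsilon\in\{-1,+1\}$, constant on each arc where $\kappa\neq0$.

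The next step is to differentiate this identity. Using the Frenet equation $\nabla_t b = -\tau n$ on the left and the Weingarten equation $\nabla_t N = -S(t)$ on the right, where $S$ is the shape operator, I obtain $-\tau n = -\varepsilon S(t)$, i.e. $\tau\, n = \varepsilon S(t)$. Because $n$ is a unit tangent vector orthogonal to $t$, the pair $\{t,n\}$ is an orthonormal basis of the tangent plane, and in this basis $S$ is symmetric with $\langle S(t),t\rangle = \mathrm{II}(t,t)=0$ by the asymptotic condition. Writing $S(t) = \langle S(t),t\rangle t + \langle S(t),n\rangle n = \langle S(t),n\rangle\, n$ and comparing with $\tau n = \varepsilon S(t)$ gives $\tau = \varepsilon\langle S(t),n\rangle$. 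Computing the determinant in this basis yields $K_{ext} = \det S = -\langle S(t),n\rangle^2$, hence $\tau^2 = \langle S(t),n\rangle^2 = -K_{ext}$. Since $K_{ext}$ is constant and negative, $\tau^2$ is a positive constant and, $\tau$ being continuous where $\kappa\neq 0$, it follows that $\tau$ is constant with $\tau^2 = -K_{ext}$.

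For the final assertion I would fix a point $p$ with $K_{ext}<0$ and work in the orthonormal basis $\{e_1,e_2\}$ of principal directions, $S e_i=\lambda_i e_i$ with $\lambda_1\lambda_2 = K_{ext}<0$. The two asymptotic directions are $t_\pm = \cos\theta\, e_1 \pm \sin\theta\, e_2$, symmetric about the principal axes, where $\lambda_1\cos^2\theta + \lambda_2\sin^2\theta = 0$. Taking the associated tangent normals $n_\pm$ (each $t_\pm$ rotated by $+\tfrac{\pi}{2}$ in the tangent plane) and evaluating the quantity $\langle S(t_\pm), n_\pm\rangle$ from the previous step, a direct computation gives $\langle S(t_+),n_+\rangle = -\langle S(t_-),n_-\rangle = \sin\theta\cos\theta(\lambda_2-\lambda_1)$, so the two torsions differ by a sign once the orientations are matched.

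I expect the main obstacle to be precisely this sign bookkeeping: the sign $\varepsilon$ in $b=\varepsilon N$ and the direction of the Frenet principal normal $n$ (fixed by $\nabla_t t = \kappa n$ with $\kappa>0$) must be tracked consistently for both asymptotic curves, so that $\tau_+ = \varepsilon_+\langle S(t_+),n_+\rangle$ and $\tau_- = \varepsilon_-\langle S(t_-),n_-\rangle$ genuinely come out with opposite signs rather than merely opposite magnitudes. Everything else is a routine consequence of the space-form Frenet and Weingarten equations; in particular, no use of the ambient curvature $c$ is needed beyond guaranteeing, via the Gauss equation $K = K_{ext}+c$, that two asymptotic directions exist where $K_{ext}<0$.
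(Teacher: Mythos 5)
There is nothing in the paper to compare you against: the paper states this theorem as an imported result, cited from \cite{Galvez}, and gives no proof of it. Judged on its own merits, your argument for the first assertion is the classical Beltrami--Enneper proof transplanted to $\mathbb{M}^3(c)$, and it is correct and complete: the asymptotic condition together with $\kappa\neq 0$ forces the principal normal to be tangent, hence $b=\varepsilon N$; differentiating this identity with the Frenet and Weingarten equations gives $\tau n=\varepsilon S(t)$; and since $\langle S(t),t\rangle=0$, the matrix of $S$ in the orthonormal basis $\{t,n\}$ has determinant $-\langle S(t),n\rangle^{2}$, so $\tau^{2}=\langle S(t),n\rangle^{2}=-K_{ext}$, a positive constant, and continuity of $\tau$ along each arc with $\kappa\neq 0$ makes $\tau$ itself constant there. (One aside is off: the existence of two asymptotic directions needs only the pointwise indefiniteness of $S$, i.e.\ $\det S=K_{ext}<0$; the Gauss equation $K=K_{ext}+c$ plays no role anywhere in the argument.)

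The sign bookkeeping that you flag as the main obstacle, and leave open, is the one genuine gap in your treatment of the last assertion, but it closes in one line and you should close it: the sign $\varepsilon$ and the direction of the Frenet normal are not independent data. Write $J$ for rotation by $+\pi/2$ about $N$ in each tangent plane. For an asymptotic curve, $n$ is tangent and orthogonal to $t$, so $n=\sigma\, Jt$ with $\sigma=\pm 1$; but then $b=t\times n=\sigma\, t\times Jt=\sigma N$, i.e.\ $\varepsilon=\sigma$. Therefore
\[
\tau \;=\; \varepsilon\langle S(t),n\rangle \;=\; \sigma^{2}\langle S(t),Jt\rangle \;=\; \langle S(t),Jt\rangle ,
\]
independently of $\sigma$, and also independently of the direction of travel, since $t\mapsto -t$ leaves $\langle S(t),Jt\rangle$ unchanged. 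With this identity in hand, your principal-basis computation $\langle S(t_{\pm}),Jt_{\pm}\rangle=\pm\sin\theta\cos\theta\,(\lambda_{2}-\lambda_{1})$ is exactly the statement $\tau_{+}=-\tau_{-}$, and both torsions are nonzero because $\tau_{\pm}^{2}=-K_{ext}>0$. So the approach is the right one; it needed only this observation to be complete.
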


Such a result is particularly important when we recover a flat surface $\Sigma \subset \mathbb{S}^3$ from its asymptotic lines. Indeed, it is shown that the asymptotic lines within their family are congruent to one another  \cite[Proposition 3]{Galvez}, and the curves that generate the translation structure are precisely representatives of each class \cite[Theorem 9]{Galvez}. Our next result provides a kind of converse of these facts:
\begin{proposition}\label{remflat}
   Let $X : I \times J \rightarrow  \mathbb{S}^3$ ,  $X(s,t)=\alpha(s)\cdot \beta(t)$, be a translation surface. in $\mathbb{S}^3$. Suppose that $\kappa_\alpha \not\equiv 0 $ and $\alpha(s)\cdot \beta(t_0)$ is an asymptotic line for all $t_0 \in J \subset \mathbb{R}$. Then $\tau_\alpha = 1$ and either  $\kappa_\beta \equiv 0 $ or $\tau_\beta = - 1 $. If we have also $\kappa_\beta \not\equiv 0$ then $g = 0$, $\tau_\alpha \equiv 1 $ and $\tau_\beta \equiv - 1$. 
\end{proposition}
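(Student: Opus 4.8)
The plan is to translate the asymptotic hypothesis into the vanishing of a single coefficient of the second fundamental form and then extract every conclusion by differentiating the resulting orthogonality relation, using the Frenet-type equations \eqref{propertiesalpha} and \eqref{propertiesbeta} together with the fact that each of $\{T_\alpha,N_\alpha,B_\alpha\}$ and $\{T_\beta,N_\beta,B_\beta\}$ is an orthonormal basis of the three-dimensional space $\mathcal{S}$. First I would note that the curves $s\mapsto\alpha(s)\cdot\beta(t_0)$ are exactly the $s$-parameter curves, with velocity $X_s$; hence they are asymptotic for every $t_0$ precisely when $e=\langle X_{ss},N\rangle\equiv 0$. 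By \eqref{secondfundamnetalform}, $e$ is a scalar multiple of $\kappa_\alpha\langle B_\alpha,T_\beta\rangle$ whose coefficient never vanishes thanks to the regularity condition \eqref{regularity}. Since $\kappa_\alpha\neq 0$ on a dense open set, this forces
$$\langle B_\alpha(s),T_\beta(t)\rangle=0\qquad\text{for all }(s,t).$$

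The central step is to deduce $\tau_\alpha\equiv 1$. Differentiating the relation above in $s$ and substituting $B_\alpha'=-(\tau_\alpha-1)N_\alpha$ from \eqref{propertiesalpha} gives $(\tau_\alpha-1)\langle N_\alpha,T_\beta\rangle=0$. I would argue by contradiction: if $\tau_\alpha\neq 1$ on some subinterval, then $\langle N_\alpha,T_\beta\rangle\equiv 0$ there as well, so for a fixed $s$ in that subinterval the unit vector $T_\beta(t)$ is orthogonal to both $N_\alpha(s)$ and $B_\alpha(s)$ for all $t$; since $\{T_\alpha,N_\alpha,B_\alpha\}$ is an orthonormal basis of $\mathcal{S}$, this means $T_\beta(t)=\pm T_\alpha(s)$, with the sign constant by continuity. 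But then $\langle T_\alpha(s),T_\beta(t)\rangle=\pm 1$, contradicting \eqref{regularity}. Hence $\tau_\alpha\equiv 1$, and by \eqref{propertiesalpha} the vector $B_\alpha$ is constant.

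To resolve the dichotomy for $\beta$, I would differentiate the same orthogonality relation in $t$: using $T_\beta'=\kappa_\beta N_\beta$ from \eqref{propertiesbeta} yields $\kappa_\beta\langle B_\alpha,N_\beta\rangle=0$. If $\kappa_\beta\not\equiv 0$, then on the open set where $\kappa_\beta\neq 0$ the constant vector $B_\alpha$ is orthogonal to both $T_\beta$ and $N_\beta$, so $B_\beta=\pm B_\alpha$ is constant there; feeding this into $B_\beta'=-(\tau_\beta+1)N_\beta$ forces $\tau_\beta=-1$. The equality $g=0$ is then immediate from \eqref{secondfundamnetalform}, because $\langle T_\alpha,B_\beta\rangle=\pm\langle T_\alpha,B_\alpha\rangle=0$ by orthonormality of the left frame. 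When instead $\kappa_\beta\equiv 0$ one lands directly in the first alternative.

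The main obstacle I expect is the bookkeeping forced by the hypothesis $\kappa_\alpha\not\equiv 0$ (and the possibility that $\kappa_\beta$ vanishes on a nowhere-dense set): the frame fields $N_\alpha,B_\alpha,N_\beta,B_\beta$ are only defined where the corresponding curvature is nonzero, so each orthogonality identity must first be established on an open set and then propagated to its closure — either by continuity of the globally defined fields $T_\alpha,T_\beta,B_\alpha$, or by analyticity where that is cleaner — before the constancy and regularity arguments can be invoked.
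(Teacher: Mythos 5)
Your proof is correct, and its skeleton coincides with the paper's: asymptotic $s$-curves force $e=0$, hence $\langle B_\alpha,T_\beta\rangle=0$ wherever $\kappa_\alpha\neq 0$, and everything follows by differentiating this identity with the frame equations \eqref{propertiesalpha}--\eqref{propertiesbeta} and exploiting that $\{T_\alpha,N_\alpha,B_\alpha\}$ and $\{T_\beta,N_\beta,B_\beta\}$ are orthonormal bases of the $3$-space of imaginary quaternions. You do streamline the paper's argument in two genuine ways. First, the paper begins by invoking Theorem \ref{TeoGalvez} (asymptotic curves of a surface with constant negative extrinsic curvature have torsion $\pm 1$; here $K_{ext}\equiv-1$) before running the frame computation; your proof shows that this external input is dispensable, since the frame computation alone pins down $\tau_\alpha\equiv 1$. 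Second, your contradiction arrives after a single differentiation: $T_\beta\perp N_\alpha$ and $T_\beta\perp B_\alpha$ force $T_\beta=\pm T_\alpha$, violating the regularity condition \eqref{regularity}; the paper instead differentiates twice and contradicts $|T_\beta|=1$ by making $T_\beta$ orthogonal to all three frame vectors. Both contradictions are sound. Your treatment of the dichotomy and of $g=0$ (differentiate in $t$ to get $\kappa_\beta\langle B_\alpha,N_\beta\rangle=0$, so $B_\beta=\pm B_\alpha$ is constant, whence $\tau_\beta=-1$ and $\langle T_\alpha,B_\beta\rangle=\pm\langle T_\alpha,B_\alpha\rangle=0$) is essentially the paper's closing computation, which it reaches via the equivalent geometric observation that $T_\beta$ is confined to a great circle of $\mathcal{S}$. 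One nitpick: $\kappa_\alpha\not\equiv 0$ yields a nonempty open set where $\kappa_\alpha\neq 0$, not a dense one; this is harmless, since every conclusion is asserted only where the Frenet apparatus exists, and your final paragraph in fact handles this bookkeeping more explicitly than the paper does.
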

\begin{proof} 
    Suppose that $\alpha(s)\cdot \beta(t_0)$ is an asymptotic line of $X$ for all $t_0 \in J \subset \mathbb{R}$. This implies that $e = 0$ and, by Theorem \ref{theo_prop_surface_general} we have $\kappa_\alpha \langle B_\alpha, T_\alpha \rangle = 0$ and $K_{ext} \equiv -1$. It follows from Theorem \ref{TeoGalvez} that $\alpha$ has torsion $\tau = \pm 1$ where the curvature does not vanishes, since $\alpha$ is congruent to $\alpha \cdot \beta(t_0)$, for all $t_0 \in J$.
    
    Suppose now  that $\kappa_\alpha \not\equiv 0 $. In order to have $e = 0$, it is necessary that $ \langle B_\alpha , T_\beta \rangle = 0 $. Differentiating this equation with respect to $s$ yields $ - (\tau_\alpha - 1 ) \langle N_\alpha , T_\beta \rangle =  0 $. Assuming $\tau_\alpha = 1 $, we then differentiate once more to obtain
    $$  -( \tau_\alpha - 1 ) \langle (-\kappa_\alpha T_ \alpha + (\tau_\alpha -1 ) B_\alpha ), T_\beta \rangle = \kappa_\alpha( \tau_\alpha - 1 ) \langle T_ \alpha , T_\beta \rangle =  0 . $$
    Since $\kappa_\alpha \not\equiv 0$ we must have 
    $$  \langle  T_\alpha , T_\beta \rangle = \langle N_\alpha , T_\beta \rangle = \langle   B_\alpha , T_\beta \rangle  = 0, $$
    a contradiction as $  T_\alpha, \   N_\alpha , \  B_\alpha , \ T_\beta \in \mathcal{S} $. Hence $\tau_\alpha = 1 $.

    Now if $\tau_\alpha = 1$ then $B_\alpha = C$ a constant vector in $\mathcal{S}$. Since   $T_\beta \in \mathcal{S}$, if $\langle B_\alpha , T_\beta \rangle = 0 $, then $T_\beta$ is contained in the intersection of a two-dimensional plane that passes through the origin with $\mathcal{S}$, which means that either $B_\beta$ or $T_\beta$ is constant. Thus, either $\kappa_\beta \equiv 0$ or $\tau_\beta \equiv -1$.
    Since the conjugacy inverts the orientation, this implies the change of sign of the torsion (as this sign defines such orientation), which means that $\tau_\alpha = - \tau_{\overline{\alpha}}$.
    
    Suppose now also $\kappa_\beta \not\equiv 0 $, then $\tau_\alpha = -1 $ and $B_\alpha = C $. Thus
    $$ ef = \kappa_\alpha \langle C , T_\beta \rangle = 0.$$
    Differentiating with respect to $t$ gives $\kappa_\beta \langle C , N_\beta \rangle = 0$. Since $\kappa_\beta \not\equiv 0 $, then  $ C \perp N_\beta$ and $ C \perp T_\beta $. But $ C \perp N_\alpha$ and $ C \perp T_\alpha $ and also $T_\alpha, T_\beta, N_\alpha, N_\beta \in \mathcal{S}$, which means that they are all contained in a two dimensional plane in $\mathbb{R}^4$ that is, at the same time,  orthogonal to $e_1$ and $C$. As $B_\beta \in \mathcal{S}$, $B_\beta \perp T_\beta$ and $B_\beta \perp N_\beta$  we must have $B_\beta = \pm C$ and thus 
     $$\langle T_\alpha , B_\beta \rangle = \pm \langle T_\alpha , C \rangle  = 0 .$$ 
     Hence, $g= 0$. As shown before, we have $\tau_\beta = - 1$.
     
\end{proof}

Following \cite{Barros}, a curve $\gamma(s)$ in $\mathbb{S}^3$ is called a \textit{general helix} if there exists a Killing vector field $V(s)$ of constant length along $\gamma$ such that the angle between $V$ and $\gamma'$ is a nonzero constant the curve.  It is established in \cite[Theorem~3]{Barros}, that a curve $\gamma$ in $\mathbb{S}^3$ is a general helix if and only if either  $\tau \equiv 0$ and $\gamma$ is a curve in some unit 2-sphere $\mathbb{S}^2$ or there exists a constant $b$ such that $\tau = b\kappa \pm 1$. Therefore, a curve $\gamma \subset \mathbb{S}^3$ with constant curvature and torsion is a general helix since it satisfies the condition $\tau = b\kappa \pm 1$. In particular, when both $\kappa$ and $\tau$ are constant, we refer to such a curve as a \textit{proper helix}.

The definition of \textit{general helix} provides a nice geometric description of a translation surface generated by curves $\alpha$ and $\beta$ where $T_\alpha$ and $T_\beta$ makes a constant angle. Firstly, it follows from Theorem \ref{theo_prop_surface_general} that the metric components of a translation surface is given by $E=G=1$ and $F = \langle T_\alpha, T_\beta \rangle.$ Therefore, when $\langle T_\alpha, T_\beta \rangle$ is constant, such a surface is flat. We can go further and characterize the curves $\alpha$ and $\beta$ in this case:

\begin{theorem}\label{prop_FlatTorus}
    Let $X : I \times J \rightarrow  \mathbb{S}^3$ ,  $X(s,t)=\alpha(s)\cdot \beta(t)$, be a translation surface. If $ \langle T_\alpha , T_\beta\rangle = C$, then this surface is flat. Moreover, one has $\kappa_\beta \equiv 0$ and either $\kappa_\alpha \equiv 0$  or $\alpha$ is a general helix satisfying $ \tau_\alpha = \frac{C}{\eta} \kappa_\alpha + 1 $ with , $C, \eta \in \mathbb{R}$.  
\end{theorem}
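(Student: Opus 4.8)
The plan is to separate the two assertions, since the flatness claim is essentially already in hand. For flatness, I would recall from the paragraph preceding the statement that, by Theorem~\ref{theo_prop_surface_general}, the first fundamental form has coefficients $E=G=1$ and $F=\langle T_\alpha,T_\beta\rangle=C$. As all three are constant, the Gaussian curvature, which is a function of $E,F,G$ and their derivatives alone, vanishes identically (equivalently, a linear change of coordinates turns the metric into the Euclidean one). Thus the surface is flat, and in particular the flatness identity \eqref{eqflat} holds on the whole surface.

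For the rigidity of the generating curves, the first task is to show that at least one of them is a great circle. Differentiating $\langle T_\alpha,T_\beta\rangle=C$ with respect to $s$ and to $t$ and using \eqref{propertiesalpha} and \eqref{propertiesbeta} yields
\begin{equation*}
\kappa_\alpha\langle N_\alpha,T_\beta\rangle=0 \qquad\text{and}\qquad \kappa_\beta\langle T_\alpha,N_\beta\rangle=0 .
\end{equation*}
I would then consider the open sets $A=\{s:\kappa_\alpha(s)\neq 0\}$ and $B=\{t:\kappa_\beta(t)\neq 0\}$ and argue that $A\times B=\emptyset$. Indeed, at a hypothetical point $(s,t)\in A\times B$ the two displayed identities give $\langle N_\alpha,T_\beta\rangle=\langle T_\alpha,N_\beta\rangle=0$, while \eqref{eqflat}, now with $\kappa_\alpha\kappa_\beta\neq 0$, forces $\langle B_\alpha,T_\beta\rangle\langle T_\alpha,B_\beta\rangle=0$. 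If $\langle B_\alpha,T_\beta\rangle=0$, expanding $T_\beta$ in the orthonormal basis $\{T_\alpha,N_\alpha,B_\alpha\}$ of $\mathcal{S}$ leaves only the $T_\alpha$-component, so $T_\beta=C\,T_\alpha$ and $1=|T_\beta|^2=C^2$, contradicting the regularity condition \eqref{regularity}; the case $\langle T_\alpha,B_\beta\rangle=0$ is identical after expanding $T_\alpha$ in $\{T_\beta,N_\beta,B_\beta\}$. Hence $A=\emptyset$ or $B=\emptyset$, i.e. $\kappa_\alpha\equiv 0$ or $\kappa_\beta\equiv 0$; invoking the remark that the results hold equally for $\beta(t)\cdot\alpha(s)$, I may relabel so that $\kappa_\beta\equiv 0$.

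It then remains to treat $\alpha$ under the assumption $\kappa_\alpha\not\equiv 0$. Since $\kappa_\beta\equiv 0$, equation \eqref{propertiesbeta} gives $T_\beta'=\kappa_\beta N_\beta=0$, so $T_\beta\equiv T_0$ is a constant unit vector in $\mathcal{S}$. On the open set where $\kappa_\alpha\neq 0$ the first identity above gives $\langle N_\alpha,T_0\rangle=0$, so $T_0=C\,T_\alpha+\eta\,B_\alpha$ with $\eta:=\langle B_\alpha,T_0\rangle$; taking norms gives $\eta^2=1-C^2$, a nonzero constant by \eqref{regularity}. Differentiating $\langle N_\alpha,T_0\rangle=0$ and using \eqref{propertiesalpha} produces $-\kappa_\alpha C+(\tau_\alpha-1)\eta=0$, that is,
\begin{equation*}
\tau_\alpha=\frac{C}{\eta}\,\kappa_\alpha+1 .
\end{equation*}
By the characterization of general helices in $\mathbb{S}^3$ from \cite[Theorem~3]{Barros} (a relation of the form $\tau=b\kappa\pm 1$), this exhibits $\alpha$ as a general helix, finishing the argument.

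I expect the delicate point to be the dichotomy step, namely passing from a pointwise algebraic obstruction to the global conclusion that an \emph{entire} generating curve is a great circle. What makes this manageable is the product structure of the bad set $A\times B$: a single bad point already yields a contradiction, so the whole argument reduces to combining the two differentiated identities with \eqref{eqflat} at one common point, with the regularity condition ruling out $|C|=1$. Everything afterward — the constancy of $T_\beta$, the expansion in the frame, and the helix relation — is then a short, direct computation.
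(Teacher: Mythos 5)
Your flatness step and your dichotomy step are correct, but you reach the dichotomy ``$\kappa_\alpha\equiv 0$ or $\kappa_\beta\equiv 0$'' by a genuinely different route than the paper. The paper argues globally on $\mathcal{S}\cong\mathbb{S}^2$: fixing $t_0$, the condition $\langle T_\alpha(s),T_\beta(t_0)\rangle=C$ traps a nonconstant $T_\alpha$ on a circle centered at $T_\beta(t_0)$, and since distinct circles meet in at most two points, varying $t_0$ forces $T_\beta$ to be constant. Your argument is pointwise and algebraic: at a single point with $\kappa_\alpha\kappa_\beta\neq 0$ you combine the two differentiated identities with the flatness identity \eqref{eqflat} and the expansion of $T_\beta$ in the orthonormal basis $\{T_\alpha,N_\alpha,B_\alpha\}$ of the linear span of $\mathcal{S}$, arriving at $|C|=1$ against \eqref{regularity}. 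This buys locality (one bad point suffices, no global geometry of circles), at the cost of leaning on \eqref{eqflat}, i.e.\ on the flatness already established, whereas the paper's cone argument uses only the hypothesis. Your derivation of the helix relation is also tidier than the paper's: writing $T_\beta\equiv T_0$, expanding $T_0=C\,T_\alpha+\eta\,B_\alpha$ with $\eta^2=1-C^2\neq 0$, and differentiating $\langle N_\alpha,T_0\rangle=0$ avoids the paper's case split on $C=0$ versus $C\neq 0$ and on $\tau_\alpha\equiv 1$ versus $\tau_\alpha\not\equiv 1$, together with its division by $\tau_\alpha-1$.

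The step that does not hold up is the relabeling. Quaternion multiplication is noncommutative, so interchanging the roles of $\alpha$ and $\beta$ produces the different surface $\beta(t)\cdot\alpha(s)$; more to the point, the hypothesis $\langle T_\alpha,T_\beta\rangle=C$ is tied to the order of the factors, because $T_\alpha=\overline{\alpha}\cdot t_\alpha$ is a left-frame object obeying \eqref{propertiesalpha} (with $\tau_\alpha-1$) while $T_\beta=\beta\cdot\overline{t_\beta}$ is a right-frame object obeying \eqref{propertiesbeta} (with $\tau_\beta+1$), and the swapped hypothesis for $\beta(t)\cdot\alpha(s)$ involves different vector fields. The branch $\kappa_\alpha\equiv 0$, $\kappa_\beta\not\equiv 0$ genuinely occurs --- take $\alpha$ a great circle, so $T_\alpha$ is constant, and $\beta$ any curve whose $T_\beta$ traces a small circle about $T_\alpha$ --- and in that branch the correct conclusion is $\tau_\beta=\frac{C}{\eta}\kappa_\beta-1$, with the opposite sign; this is precisely the content of the corollary that follows Theorem \ref{prop_FlatTorus} in the paper, and there the literal conclusion ``$\kappa_\beta\equiv 0$'' fails. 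So your appeal to the $\alpha\leftrightarrow\beta$ symmetry remark cannot close this case: that remark concerns the other product and explicitly records the torsion sign change. In fairness, the paper's own proof has the same blind spot, since after its dichotomy it only pursues the branch $\kappa_\alpha\not\equiv 0$ (where $\kappa_\beta\equiv 0$ does follow); the defect is as much in the statement as in your argument, but an honest proof should either restrict to that branch, as the paper tacitly does, or state the swapped-branch conclusion with the sign flip rather than claim it by relabeling.
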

\begin{proof}
Supposing that  $F = \langle T_\alpha , T_\beta \rangle  = C$ and knowing that $T_\alpha$ and $T_\beta$ are curves contained in $\mathcal{S} $, we may see this elements as curves in $\mathbb{S}^2 \subset \mathbb{R}^3$. Now, fixing $t_0$, the condition $\langle T_\alpha(s) , T_\beta(t_0) \rangle  = C$ for every $s$ implies that either  $T_\alpha$ is constant, hence, $\kappa_\alpha \equiv 0 $ or $T_\alpha$ is contained in a cone centered in $T_\beta(t_0)$. In the second case, the angle must remain constant if we choose $t_1 \ne t_0$, which implies by the geometry of the sphere $\mathbb{S}^2$ that $T_\beta$ is constant and thus $\kappa_\beta \equiv 0$. Symmetrically fixing $s_0$ we get that $T_\beta$ is constant or contained in a cone with center $T_\alpha$. 

Now, differentiating $\langle N_\alpha , T_\beta \rangle$ with respect to $s$ gives  
\begin{equation}\label{eqpropflatC}
  \kappa_\alpha \langle N_\alpha , T_\beta \rangle  = 0 .
\end{equation}
If $\kappa_\alpha \not\equiv 0$ we have $\langle N_\alpha , T_\beta \rangle  = 0$. Differentiating $\langle N_\alpha , T_\beta \rangle  = 0$ with respect to $s$ gives 
$$ - \kappa_\alpha \langle T_\alpha , T_\beta \rangle + ( \tau_\alpha - 1 )\langle B_\alpha , T_\beta \rangle = - \kappa_\alpha C + (\tau_\alpha - 1 )\langle B_\alpha , T_\beta \rangle  = 0 . $$

If $C = 0 $,  then $ (\tau_\alpha - 1) \langle B_\alpha , T_\beta \rangle  = 0  $. Thus either $\tau_\alpha \equiv 1 $  or $\langle B_\alpha , T_\beta \rangle  \equiv 0  $. If the second equality is true, then $T_\alpha \| T_\beta $ and $C = \pm 1 $, a contradiction. Hence $\tau_\alpha \equiv 1 $. 

If $C \ne 0 $, then either $\tau_\alpha \equiv 1 $ or $\tau_\alpha \not\equiv 1 $. If $\tau_\alpha \equiv 1 $ then $\kappa_\alpha C \equiv 0$, which implies that $C = 0$ or $\kappa_\alpha \equiv 0$, both contradictions. Thus,  $\tau_\alpha \not\equiv 1 $ and we have  
$$ \langle B_\alpha , T_\beta \rangle  = C \frac{\kappa_\alpha }{ \tau_\alpha - 1 } . $$
Differentiating with respect to $s$, and using equation \eqref{eqpropflatC}, gives 
$$ -(\tau_\alpha - 1 ) \langle N_\alpha , T_\beta \rangle  = \bigg( C \frac{\kappa_\alpha }{\tau_\alpha - 1}\bigg)' = 0, $$
which implies that $ C \kappa_\alpha =  \eta ( \tau_\alpha - 1 )$, $ \eta \in \mathbb{R} $. It is clear that $\eta \ne 0 $, otherwise $\kappa_\alpha C = 0$, which implies that $C = 0$ or $\kappa_\alpha = 0$, both contradictions. Hence we have 
$$ \frac{C}{\eta} \kappa_\alpha + 1 =  \tau_\alpha.$$
\end{proof}

As a consequence, we can extend this result to a general context, i.e., for any two curves in $\mathbb{S}^3$, in which  $ \langle \overline{\alpha} \cdot t_\alpha, \beta \cdot \overline{t_\beta} \rangle = C $.
\begin{corollary}
    Let $\alpha(s)$ and $\beta(t)$ be arc length curves in $\mathbb{S}^3$ such that $ \langle \overline{\alpha} \cdot t_\alpha, \beta \cdot \overline{t_\beta} \rangle = C$. Then $\kappa_\alpha \equiv 0$ and either $\kappa_\beta \equiv 0$  or $\beta$ is a general helix  with $ \tau_\beta = \frac{C}{\eta} \kappa_\beta  - 1 $, $C, \eta \in \mathbb{R}$.  
\end{corollary}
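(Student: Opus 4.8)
The plan is to deduce this statement directly from Theorem~\ref{prop_FlatTorus} by passing to the conjugate curves $\overline{\alpha}$ and $\overline{\beta}$, exploiting that quaternionic conjugation is an orientation-reversing isometry of $\mathbb{S}^3$. First I would record the frame correspondence under conjugation. Since $(\overline{\gamma})' = \overline{t_\gamma}$ for any arc length curve $\gamma$, the right-frame tangent of $\overline{\alpha}$ is $\hat{T}_{\overline{\alpha}} = \overline{\alpha}\cdot\overline{(\overline{\alpha})'} = \overline{\alpha}\cdot t_\alpha = T_\alpha$, while the left-frame tangent of $\overline{\beta}$ is $T_{\overline{\beta}} = \overline{(\overline{\beta})}\cdot(\overline{\beta})' = \beta\cdot\overline{t_\beta} = T_\beta$. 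Consequently the hypothesis $\langle \overline{\alpha}\cdot t_\alpha, \beta\cdot\overline{t_\beta}\rangle = C$ is exactly $\langle T_{\overline{\beta}}, \hat{T}_{\overline{\alpha}} \rangle = C$, which is the hypothesis of Theorem~\ref{prop_FlatTorus} applied to the ordered pair $(\overline{\beta}, \overline{\alpha})$, with $\overline{\beta}$ carrying the left frame and $\overline{\alpha}$ the right frame. Note that only the tangent vectors $T_\alpha, T_\beta$ enter, so the statement makes sense for arbitrary curves and the translation-surface regularity $C \neq \pm 1$ is not needed.

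Next I would apply Theorem~\ref{prop_FlatTorus} to this pair, obtaining $\kappa_{\overline{\alpha}} \equiv 0$ together with the dichotomy that $\overline{\beta}$ is either a great circle or a general helix with $\tau_{\overline{\beta}} = \frac{C}{\eta}\kappa_{\overline{\beta}} + 1$ for some $\eta \in \mathbb{R}$. It then remains to transport these conclusions back along conjugation. Because conjugation is an isometry, curvatures are preserved, so $\kappa_{\overline{\alpha}} = \kappa_\alpha$ gives $\kappa_\alpha \equiv 0$ and $\kappa_{\overline{\beta}} = \kappa_\beta$; because it reverses orientation it flips the sign of the torsion, $\tau_{\overline{\gamma}} = -\tau_\gamma$, the same fact used in the proof of Proposition~\ref{remflat}. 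Hence, if $\kappa_\beta \not\equiv 0$, the relation for $\overline{\beta}$ becomes $-\tau_\beta = \frac{C}{\eta}\kappa_\beta + 1$, that is $\tau_\beta = \frac{C}{-\eta}\kappa_\beta - 1$; relabelling the free constant $\eta$, which is an arbitrary nonzero real, yields $\tau_\beta = \frac{C}{\eta}\kappa_\beta - 1$, and $\beta$ is a general helix since that property is preserved by the isometry. This is precisely the claimed conclusion.

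The step I expect to require the most care is the behaviour of the Frenet data under conjugation: verifying that $\kappa$ is invariant while $\tau$ changes sign, and that the two frame identities $\hat{T}_{\overline{\alpha}} = T_\alpha$ and $T_{\overline{\beta}} = T_\beta$ hold exactly, as these are what convert the left/right-frame asymmetry of the hypothesis into the form demanded by Theorem~\ref{prop_FlatTorus}. The sign reversal of $\tau$ is exactly what produces the $-1$ in $\tau_\beta = \frac{C}{\eta}\kappa_\beta - 1$, in contrast with the $+1$ of Theorem~\ref{prop_FlatTorus}. Equivalently, one may bypass conjugation and simply rerun the computation of Theorem~\ref{prop_FlatTorus} with the roles of the curves interchanged, differentiating $\langle T_\alpha, N_\beta \rangle$ and using the right-frame equations \eqref{propertiesbeta}, where the coefficient $\tau_\beta + 1$ in place of $\tau_\alpha - 1$ directly produces $\tau_\beta + 1 = \frac{C}{\eta}\kappa_\beta$. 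I would present the conjugation argument as the main proof and keep the direct computation as a cross-check on the sign.
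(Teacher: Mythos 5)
Your proposal is correct, but it does not follow the paper's own (implicit) route. The paper supplies no written proof of this corollary: it is offered as an immediate consequence of Theorem~\ref{prop_FlatTorus} under the paper's standing convention that statements for $\alpha(s)\cdot\beta(t)$ transfer to the product with the roles of the curves interchanged, the left-frame equations \eqref{propertiesalpha} being replaced by the right-frame equations \eqref{propertiesbeta}, so that the coefficient $\tau_\alpha-1$ becomes $\tau_\beta+1$ and yields $\tau_\beta=\frac{C}{\eta}\kappa_\beta-1$. That is exactly your secondary ``cross-check'' computation. Your primary argument is genuinely different: the frame identities $\hat{T}_{\overline{\alpha}}=T_\alpha$ and $T_{\overline{\beta}}=T_\beta$ (both verified correctly) let you quote Theorem~\ref{prop_FlatTorus} as a black box for the conjugate pair $(\overline{\beta},\overline{\alpha})$, whose product is $\overline{\alpha\cdot\beta}$, and the sign change $+1\to-1$ is then accounted for structurally by $\kappa_{\overline{\gamma}}=\kappa_\gamma$ and $\tau_{\overline{\gamma}}=-\tau_\gamma$ for the orientation-reversing isometry $x\mapsto\overline{x}$ --- the same fact the paper invokes inside the proof of Proposition~\ref{remflat}. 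What your route buys is that no computation is repeated and the provenance of the $-1$ becomes conceptual; what the paper's route buys is that it never needs the behaviour of Frenet data under orientation-reversing isometries and stays entirely within the frame ODEs.

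One step deserves more care than your parenthetical claim that the regularity condition ``is not needed''. Theorem~\ref{prop_FlatTorus} is stated for translation surfaces, so quoting it for the pair $(\overline{\beta},\overline{\alpha})$ formally presupposes $C\neq\pm1$, whereas the corollary is asserted for arbitrary arc length curves. The fix is one line: if $C=\pm1$, then, since $T_\alpha(s)$ and $T_\beta(t)$ are unit vectors, equality in Cauchy--Schwarz forces $T_\alpha\equiv\pm T_\beta$ to be a single constant vector, hence $\kappa_\alpha\equiv\kappa_\beta\equiv0$ and the conclusion holds trivially. (Alternatively, one can note that the proof of Theorem~\ref{prop_FlatTorus} nowhere uses the regularity condition, only the frame equations.) With that line added, your argument is complete.
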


Still in the context of general helices, we now present the following result, which characterizes the behavior of the associated right frame of a general helix in $\mathbb{S}^3$ and will be useful in the next section.
\begin{proposition}\label{lemma_proper_helix}
    Let $\alpha$ be an arc length general helix in $\mathbb{S}^3$ with arc length parameter $s$. Then $T_\alpha$, $N_\alpha$ and $B_\alpha$ describe circles in $\mathcal{S}$.
\end{proposition}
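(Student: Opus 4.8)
The plan is to exploit the Frenet-type equations \eqref{propertiesalpha} satisfied by the associated frame, which exhibit $\{T_\alpha, N_\alpha, B_\alpha\}$ as an orthonormal moving frame taking values in $\mathcal{S}$ (a unit $2$-sphere) and evolving by a ``spherical Frenet system'' with curvature $\kappa_\alpha$ and shifted torsion $\tau_\alpha-1$. The guiding principle is that a curve on a round $2$-sphere is a circle precisely when it lies in the intersection of the sphere with an affine plane, equivalently when its inner product with some fixed nonzero direction is constant. Thus the goal reduces to producing a single constant vector $V$ against which all three frame vectors have constant inner product.

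First I would invoke the general helix characterization of \cite[Theorem~3]{Barros}: choosing the sign compatible with the frame, the hypothesis yields a constant $b$ with $\tau_\alpha - 1 = b\kappa_\alpha$, so that the shifted torsion occurring in \eqref{propertiesalpha} is a constant multiple of the curvature. This proportionality is the crux of the argument. Setting $V := b\,T_\alpha + B_\alpha$ and differentiating via \eqref{propertiesalpha} gives $V' = b\kappa_\alpha N_\alpha - (\tau_\alpha-1)N_\alpha = 0$, so $V$ is a fixed vector in the ambient $\mathbb{R}^3$ containing $\mathcal{S}$; moreover $V \neq 0$, since $T_\alpha \perp B_\alpha$ forces $|V|^2 = b^2 + 1$.

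Then I would simply read off the three inner products using the orthonormality of the frame: $\langle T_\alpha, V\rangle = b$, $\langle N_\alpha, V\rangle = 0$ and $\langle B_\alpha, V\rangle = 1$, all constant. Hence each of $T_\alpha$, $N_\alpha$, $B_\alpha$ lies in the intersection of $\mathcal{S}$ with an affine plane orthogonal to the fixed direction $V$, i.e. describes a circle, with $N_\alpha$ tracing a great circle because its height vanishes. To confirm these are genuine (non-degenerate) circles, note that $\kappa_\alpha \neq 0$ gives $T_\alpha' = \kappa_\alpha N_\alpha \neq 0$, so $T_\alpha$ is non-constant and hence not parallel to $V$; its height $b/\sqrt{b^2+1}$ then lies strictly between $-1$ and $1$, and the same reasoning applies to $N_\alpha$. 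The only degeneration occurs for $B_\alpha$ when $b=0$ (that is, $\tau_\alpha \equiv 1$), where $B_\alpha$ is constant and the circle collapses to a point.

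The main obstacle I anticipate is bookkeeping rather than conceptual: one must fix the correct frame (left versus right) and the correct sign in $\tau_\alpha = b\kappa_\alpha \pm 1$ so that the shifted torsion in the relevant system, \eqref{propertiesalpha} or \eqref{propertiesbeta}, collapses exactly to $b\kappa_\alpha$; the opposite sign would leave an additive constant and destroy the vanishing of $V'$. The purely spherical general helices with $\tau_\alpha \equiv 0$ fall outside this proportional regime, and I would treat them separately by the observation that there $b_\alpha$ is constant, reducing the claim to the statement that $\alpha$ itself is a circle in the totally geodesic $\mathbb{S}^2$.
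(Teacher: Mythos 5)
Your argument is correct in the regime it targets and reaches the conclusion by a genuinely more direct route than the paper. The paper does not construct the axis up front: it regards $\hat{\alpha}=T_\alpha$ as a curve in $\mathcal{S}$, reparametrizes it by the arc length $\hat{s}=\int\kappa_\alpha\,ds$, computes its spherical Frenet data from \eqref{propertiesalpha}, and finds that its geodesic curvature equals $(\tau_\alpha-1)/\kappa_\alpha=b$, hence constant, so $T_\alpha$ is a circle; only then does it take the pole $u$ of that circle and differentiate $\langle T_\alpha,u\rangle=\cos\theta$ twice to obtain $\langle N_\alpha,u\rangle=0$ and $\langle B_\alpha,u\rangle=\cos\theta/b$, both constant. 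Your vector $V=bT_\alpha+B_\alpha$ is, after normalization, exactly that pole: $V/|V|=(bT_\alpha+B_\alpha)/\sqrt{b^2+1}$ gives heights $b/\sqrt{b^2+1}$, $0$, $1/\sqrt{b^2+1}$ for $T_\alpha$, $N_\alpha$, $B_\alpha$. So the two proofs find the same axis, but yours produces it in one line from the proportionality $\tau_\alpha-1=b\kappa_\alpha$ and handles the three frame vectors simultaneously, with no reparametrization and no Frenet computation for $\hat{\alpha}$. What the paper's longer computation buys is quantitative: it identifies the geodesic curvature of the circle traced by $T_\alpha$ (namely $b$) and the analogous ratios for $B_\alpha$, which is the kind of information that feeds the explicit circle parametrizations in the proof of Theorem \ref{Theo_min_2helix}; from your heights the same radii can be recovered, but only after an extra step.

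Two caveats you raise are shared with, not created relative to, the paper's proof. First, the sign issue you flag is precisely the paper's ``we may suppose without loss of generality that $\tau_\alpha=b\kappa_\alpha+1$'': with the left frame only the $+$ sign makes the shifted torsion in \eqref{propertiesalpha} proportional to $\kappa_\alpha$, and for the $-$ sign one must pass to the right frame \eqref{propertiesbeta}, which strictly speaking proves the statement for $\hat{T}_\alpha,\hat{N}_\alpha,\hat{B}_\alpha$ rather than for $T_\alpha,N_\alpha,B_\alpha$; your explicit acknowledgment of this is more candid than the paper's WLOG, but neither text resolves it. Second, the purely spherical general helices of \cite{Barros} ($\tau_\alpha\equiv0$, $\kappa_\alpha$ not constant) are covered by neither proof: the paper's opening assertion that every general helix satisfies $\tau_\alpha=b\kappa_\alpha\pm1$ silently discards them, and your proposed reduction to ``$\alpha$ is a circle in a totally geodesic $\mathbb{S}^2$'' cannot be carried out, since such curves need not be circles; in fact for them $T_\alpha$ has geodesic curvature $-1/\kappa_\alpha$ and $B_\alpha$ has geodesic curvature $\pm\kappa_\alpha$ in $\mathcal{S}$, so the conclusion genuinely fails unless $\kappa_\alpha$ is constant. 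That is a defect of the proposition as stated, affecting the paper's own proof just as much as your attempt.
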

\begin{proof}
    Let $\alpha(s)$ be a helix in $\mathbb{S}^3$, then  there exists a constant $b \in \mathbb{R}$ such that $\tau_\alpha = b \kappa_\alpha \pm 1  $. We may suppose without loss of generality that $\tau_\alpha = b \kappa_\alpha + 1  $.  Initially, if $\kappa_\alpha \equiv 0$ then $T_\alpha$ is constant and $N_\alpha$ and $B_\alpha$ are not defined. If $\tau_\alpha \equiv 1 $ then $B_\alpha$ is constant and $T_\alpha$ and $N_\alpha$ describe the great circle that is orthogonal to $B_\alpha$ and $e_1$. Thus, suppose from now on that $\kappa_\alpha \not\equiv 0$ and $\tau_\alpha \not\equiv 1 $ and consider the curve  $\hat{\alpha}(s) = T_\alpha(s)$. Since $\hat{\alpha}$ is in $\mathcal{S}$, then is immediate that $\tau_{\hat{\alpha}}$ vanishes. We compute  
    $$ \frac{d}{ds} \hat{\alpha}(s) = T'_\alpha(s) = \kappa_\alpha(s) N_\alpha(s) . $$ 
    Hence, for such a curve, consider the arc length parameter $\hat{s}(s) = \int_0^s  \kappa_\alpha (s) \ ds$.
   
    Hence $\frac{d}{d\hat{s}} s(\hat{s}) = 1/ \kappa_\alpha (\hat{s})$.
   
    Thus 
    $$ \frac{d}{d\hat{s}} \hat{\alpha}(\hat{s}) = \frac{1}{\kappa_\alpha(\hat{s})} [\kappa_\alpha(\hat{s}) N_\alpha(\hat{s})] = N_\alpha (\hat{s}). $$
    Since $\hat{s}$ is the arc length parameter of $\hat{\alpha}$, we conclude that $t_{\hat{\alpha}} (\hat{s})= N_\alpha (\hat{s})$. Now, we compute  
    $$ t_{\hat{\alpha}}' = \frac{d}{d \hat{s}}  N_\alpha(\hat{s})  = \frac{1}{\kappa_\alpha(\hat{s})} [ - \kappa_\alpha (\hat{s}) T_\alpha(\hat{s}) + (\tau_\alpha(\hat{s}) - 1 ) B_\alpha (\hat{s})] = \frac{\tau_\alpha(\hat{s}) - 1}{\kappa_\alpha(\hat{s})} B_\alpha (\hat{s}) -  \hat{\alpha}(\hat{s}). $$
    Since $B_\alpha \perp \hat{\alpha}$, $B_\alpha \perp t_{\hat{\alpha}}$ and using  equation \eqref{eq_curv_alpha}, we obtain $n_{\hat{\alpha}} = B_\alpha$ and $\kappa_{\hat{\alpha}}(\hat{s}) =  (\tau_\alpha(\hat{s}) - 1 )/ \kappa_\alpha(\hat{s}) $. Now, as $\tau_\alpha(\hat{s}) = b \kappa_\alpha (\hat{s}) + 1 $ we have 
    $$ \kappa_{\hat{\alpha}} =  \frac{(\tau_\alpha(\hat{s}) - 1 )}{ \kappa_\alpha(\hat{s})} =  \frac{b \kappa_\alpha(\hat{s})}{\kappa_\alpha(\hat{s})}  = b . $$
    Since $b$ is constant by hypothesis, we conclude that $T_\alpha$ describes a circle in $\mathcal{S}$.  
    
    We remember that in $\mathbb{S}^3$, a small circle is always contained in a small sphere, that is, for some $v \in \mathbb{S}^3$ we describe a small sphere as $\mathcal{S}_v = \{ w \in \mathbb{S}^3 : \ w \perp v   \} $. Thus, for some $u \in \mathcal{S}_v$ and a constant $\theta\in \mathbb{R}$, a small circle $\mathcal{C}_{v,u,\theta} = \{ w \in \mathcal{S}_v : \langle w, u\rangle = \cos(\theta)  \} $ has pole (or spherical center) given by $v$ and $u$. Since $T_\alpha$ describes a circle in $\mathcal{S}$, then suppose that for some $u \in \mathcal{S}$, it describes the circle $\mathcal{C}_{e_1,u,\theta} = \{ w \in \mathcal{S} : \langle w, u\rangle = \cos(\theta)  \} $. Thus $\langle T_\alpha , u \rangle = \cos(\theta) $. Differentiating with respect to $s$ gives $ \kappa_\alpha \langle N_\alpha , u \rangle = 0 $. As $\kappa_\alpha \not\equiv 0$, it follows that  $t_{\tilde{\alpha}} = N_\alpha$ describes the great circle that is orthogonal to $u$ and $e_1$. Now, differentiating $\langle N_\alpha, u \rangle $ again with respect to $s$ gives 
    $$ - \kappa_\alpha \langle T_\alpha, u \rangle + (\tau_\alpha - 1 )\langle B_\alpha, u \rangle = 0 . $$
    Hence 
    $$ \langle B_\alpha, u \rangle = \frac{\kappa_\alpha}{\tau_\alpha - 1} \cos(\theta) =  \frac{1}{b} \cos(\theta), $$
    which implies that $\langle B_\alpha, u \rangle $ is constant. Thus, $B_\alpha$ describes a circle in $\mathcal{S}$ and $T_\alpha$, $N_\alpha$ and $B_\alpha$ have the same pole.

\end{proof}

\section{On CMC Translation Surfaces in \texorpdfstring{$\mathbb{S}^3$}{TEXT}}\label{SectionCMC}

We begin this section with the following example, which presents the CMC Clifford tori as translation surfaces with generating curves given by great circles.

\begin{example}\label{ex_flat_torus}
    It is well known that some classical examples of flat surfaces in $\mathbb{S}^3$ are the so-called Clifford torus $C_{R_1,R_2}$, given by
    $$ C_{R_1,R_2} = \bigg\{ (x_1, x_2, x_3, x_4) \in \mathbb{R}^4 \ \ \bigg| \ \  x_1^2 + x_2^2 = R_1^2 , \ \ x_3^2 + x_4^2 = R_2^2 \bigg\} .$$
    One can parameterize this kind of surface as 
    $$X(s,t) = (R_1 \cos(s+t), R_1 \sin(s+t), R_2 \cos(s-t), R_2 \sin(s-t) ), \ \textnormal{ with } R_1^2 + R_2^2 = 1 .$$
    Now, consider the following linear map that happens to be a rotation in $\mathbb{R}^4$
    $$M_{R_1,R_2} = \begin{bmatrix}
        R_1 & 0 & R_2 & 0 \\
        0 & R_1 & 0 & R_2 \\
        R_2 & 0 & -R_1 & 0 \\
        0 & R_2 & 0 & -R_1  
    \end{bmatrix}, \ \ \ \textnormal{Det}(M_{R_1,R_2}) = 1 . $$
    Thus, we have 
    $$ M_{R_1,R_2} (X(s,t)) = 
    \begin{bmatrix}
        \cos(s)\cos(t) - (R_1^2  - R_2^2) \sin(s)\sin(t) \\
        \sin(s)\cos(t) + (R_1^2  - R_2^2) \cos(s)\sin(t) \\
        - 2 R_1 R_2 \sin(s)\sin(t) \\
        2 R_1 R_2 \cos(s)\sin(t) 
    \end{bmatrix} . $$
    Such parameterization implies that $M_{R_1,R_2} (X(s,t)) = \alpha(s) \cdot \beta(t)$, where 
    $$ \alpha(s) = (\cos(s),\sin(s),0,0), \  \ \ \beta(t) = (\cos(t),(R_1^2 - R_2^2)\sin(t), 0, 2R_1R_2  \sin(t)).  $$
    Hence, $  \langle T_\alpha, T_\beta \rangle = - (R_1^2 - R_2^2) $. Since $\kappa_\alpha = \kappa_\beta = 0$, by Theorem \ref{theo_prop_surface_general} we have 
    $$H = \dfrac{- \langle T_\alpha , T_\beta \rangle }{\sqrt{1 - \langle T_\alpha , T_\beta \rangle^2}} = \dfrac{(R_1^2 - R_2^2)}{\sqrt{1 - (R_1^2 - R_2^2)^2}} . $$
    Since $R_1$ and $R_2$ are constant, then $\alpha(s) \cdot \beta(t) $ is a CMC Surface. Moreover, when $R_1 = R_2 = 1/\sqrt{2} $, we have the minimal and flat Clifford torus (See Figure \ref{fig2:sub1}).
\end{example}

\begin{figure}[h]
\centering
  \includegraphics[width=.4\linewidth]{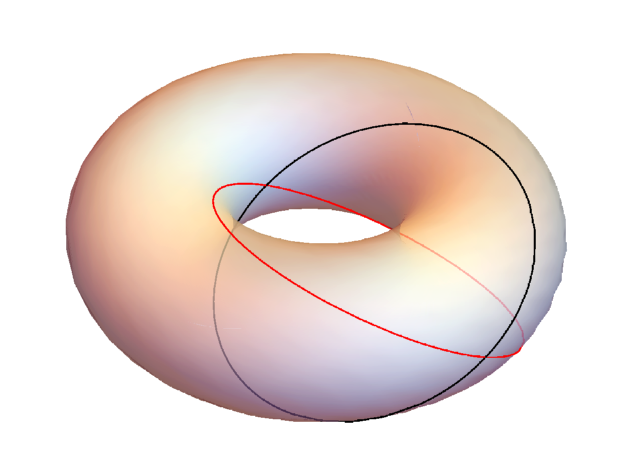}
  \includegraphics[width=.37\linewidth]{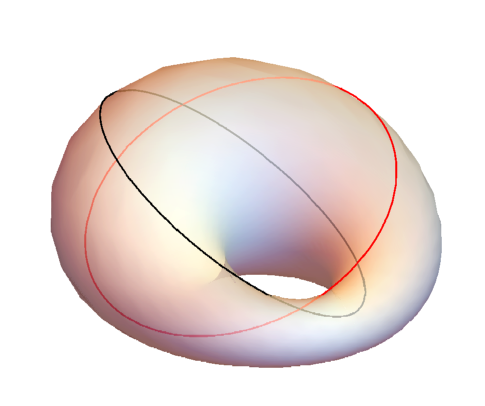}
  \caption{Illustration of two Clifford tori with mean curvatures $H =0$ and $H = 1 / \sqrt{3}$, respectively. The generating curves $\alpha$ and $\beta$ are highlighted in red and black, respectively.}
  \label{fig2:sub1}
\end{figure}

The first result of the section shows that the examples above are unique in the sense of generating curves given by great circles.

\begin{theorem}\label{TeoCMCbothcurvatzero}
Let $X : I \times J \rightarrow  \mathbb{S}^3$,  $X(s,t)=\alpha(s)\cdot \beta(t)$, be a translation surface. If $\kappa_\alpha = \kappa_\beta = 0$, then it is a CMC Clifford torus. Moreover, we have $\langle T_\alpha , T_\beta \rangle  = C \in (-1,1)$  and the mean curvature is given by
$$H = \dfrac{- C }{\sqrt{1 - C ^2}} . $$
\end{theorem}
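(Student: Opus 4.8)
The plan is to use that $\kappa_\alpha=0$ forces $\alpha$ to be a geodesic of $\mathbb{S}^3$, i.e. a great circle, and likewise for $\beta$, and then to read off the constancy of the frame, the mean curvature, and the congruence class directly from the Frenet-type equations \eqref{propertiesalpha}, \eqref{propertiesbeta} and from the computations in the proof of Theorem \ref{theo_prop_surface_general}.

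First I would establish constancy. From \eqref{propertiesalpha} we have $T_\alpha'=\kappa_\alpha N_\alpha$, so $\kappa_\alpha\equiv 0$ gives $T_\alpha'\equiv 0$; symmetrically \eqref{propertiesbeta} gives $T_\beta'\equiv 0$. Hence $T_\alpha$ and $T_\beta$ are constant vectors of $\mathcal{S}$ and $C:=\langle T_\alpha,T_\beta\rangle$ is a constant, which by the regularity condition \eqref{regularity} must lie in $(-1,1)$. For the mean curvature, I would not substitute into \eqref{Meancurvature} (whose reduction presupposes that the Frenet binormals $B_\alpha,B_\beta$ exist), but rather use the second fundamental form computed in the proof of Theorem \ref{theo_prop_surface_general}: when $\kappa_\alpha=\kappa_\beta=0$ that computation gives $e=g=0$ and $f=\sqrt{1-C^2}$, together with $E=G=1$ and $F=C$. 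Then
$$ H=\frac{1}{2}\,\frac{eG-2fF+gE}{EG-F^2}=\frac{-fF}{1-F^2}=\frac{-C}{\sqrt{1-C^2}}, $$
which is constant, so the surface is CMC.

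The substantive step is identifying the surface with a Clifford torus. Writing the great circles in arc length as $\alpha(s)=\cos s\,p+\sin s\,w$ and $\beta(t)=\cos t\,p'+\sin t\,w'$ with $\{p,w\}$ and $\{p',w'\}$ orthonormal, I would first normalize by isometries. Left translation by $\overline p$ and right translation by $\overline{p'}$ are isometries of the bi-invariant metric that preserve the translation structure, because $\overline p\cdot(\alpha\cdot\beta)\cdot\overline{p'}=(\overline p\cdot\alpha)\cdot(\beta\cdot\overline{p'})$; using left- and right-invariance of the metric one checks that $\overline p\cdot w$ and $w'\cdot\overline{p'}$ lie in $\mathcal{S}$, so after these translations we may assume $\alpha(s)=\cos s\,e_1+\sin s\,u$ and $\beta(t)=\cos t\,e_1+\sin t\,v$ with $u,v\in\mathcal{S}$ unit. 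Evaluating at the origin gives $T_\alpha=u$ and $T_\beta=\overline v=-v$, whence $C=-\langle u,v\rangle$. Finally, a conjugation $x\mapsto q\cdot x\cdot\overline q$ is again an isometry compatible with the product, since $q\cdot(\alpha\cdot\beta)\cdot\overline q=(q\cdot\alpha\cdot\overline q)\cdot(q\cdot\beta\cdot\overline q)$, it fixes $e_1$, and it acts on $\mathcal{S}\cong\mathbb{R}^3$ as an arbitrary element of $SO(3)$. Since $SO(3)$ acts transitively on pairs of unit vectors with a prescribed inner product, I can rotate $(u,v)$ onto the standard pair $\big(e_2,\,(R_1^2-R_2^2)e_2+2R_1R_2e_4\big)$ of Example \ref{ex_flat_torus} with $R_1^2=(1-C)/2$ and $R_2^2=(1+C)/2$, so that $R_1^2-R_2^2=-C$ and $R_1^2+R_2^2=1$. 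This exhibits the surface as congruent to the Clifford torus $C_{R_1,R_2}$, and the mean curvature matches the value found in the Example.

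The main obstacle is exactly this last identification: the constancy of $C$ and of $H$ are immediate, but passing from an abstract product of two great circles to a concrete Clifford torus requires knowing that left translations, right translations and conjugations are isometries compatible with the quaternionic product, and that conjugation realizes all of $SO(3)$ on the imaginary quaternions $\mathcal{S}$ — enough to normalize any pair of great circles to the standard configuration. Everything else is a direct substitution into the formulas already established.
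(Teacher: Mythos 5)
Your proposal is correct, and it reaches the key conclusion by a genuinely different route from the paper. The first two steps coincide in substance: the paper also deduces that $T_\alpha$ and $T_\beta$ are constant and then reads off $H=-C/\sqrt{1-C^2}$, although it does so by substituting directly into \eqref{Meancurvature}; your variant via the coefficients $e=g=0$, $f=\sqrt{1-C^2}$ from the proof of Theorem \ref{theo_prop_surface_general} is slightly more careful, since \eqref{Meancurvature} is written in terms of the binormals $B_\alpha,B_\beta$, which do not exist when the curvatures vanish. The real divergence is in identifying the surface as a Clifford torus: the paper simply invokes the proof of \cite[Proposition~3.4]{Hoffman}, which says that a flat CMC surface in $\mathbb{S}^3$ must be a standard product of circles $\mathbb{S}^1(r)\times\mathbb{S}^1(\rho)$, whereas you give a self-contained congruence argument, normalizing the two great circles by a left translation, a right translation, and a conjugation $x\mapsto q\cdot x\cdot\overline{q}$ (using that conjugation fixes $e_1$ and realizes all of $SO(3)$ on $\mathcal{S}$), so as to land exactly on the configuration of Example \ref{ex_flat_torus} with $R_1^2=(1-C)/2$, $R_2^2=(1+C)/2$. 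The paper's route is shorter and leans on an external classification of flat CMC surfaces; yours is constructive, avoids the external reference entirely (flatness is not even needed as an intermediate step), and makes explicit which isometries of $\mathbb{S}^3$ realize the congruence --- at the cost of having to verify the compatibility of these isometries with the quaternionic product, which you do correctly.
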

\begin{proof}
    If $\kappa_\alpha = \kappa_\beta = 0$, then $T_\alpha $ and $T_\beta $ are constant vectors ans also, this surface is flat. Thus, $ \langle T_\alpha , T_\beta \rangle = C \in (-1,1) $ and equation \eqref{Meancurvature} becomes 
     $$H = \dfrac{- \langle T_\alpha , T_\beta \rangle }{\sqrt{1 - \langle T_\alpha , T_\beta \rangle^2}} = \dfrac{- C }{\sqrt{1 - C ^2}} . $$
    
    Moreover, from the proof of \cite[Proposition~3.4]{Hoffman}, we know that such surfaces must be a standard product of circles, $\mathbb{S}^1(r)\times \mathbb{S}^1(\rho)$, and thus CMC Clifford tori.
\end{proof}
We can again use the argument based on \cite[Proposition~3.4]{Hoffman} to show that CMC flat surfaces where $\langle T_\alpha, T_\beta\rangle$ is constant or one of the generating curves is a great circle must necessarily be a CMC Clifford torus.
\begin{theorem}\label{TeoCMCcurvatzero} 
    Let $X : I \times J \rightarrow  \mathbb{S}^3$,  $X(s,t)=\alpha(s)\cdot \beta(t)$, be a CMC translation surface. If $ F = \langle T_\alpha, T_\beta\rangle = C \in (-1,1) $  or $\kappa_\alpha = 0$ (symmetrically $\kappa_\beta = 0$), then this surface is a CMC Clifford Torus.
\end{theorem}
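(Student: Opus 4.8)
The plan is to reduce both hypotheses to a single geometric fact---that the surface is flat---and then exploit the rigidity of flat CMC surfaces in $\mathbb{S}^3$ via the Gauss equation, finishing with the same classification argument used in Theorem \ref{TeoCMCbothcurvatzero}. First I would establish flatness under either hypothesis. If $F = \langle T_\alpha, T_\beta \rangle = C \in (-1,1)$ is constant, flatness is immediate from Theorem \ref{prop_FlatTorus}; equivalently, since $E = G = 1$ and $F = C$ are all constant, the induced metric has constant coefficients and hence $K \equiv 0$. If instead $\kappa_\alpha = 0$ (and symmetrically $\kappa_\beta = 0$), then the Gaussian curvature formula \eqref{Gaussiancurvature} carries an explicit factor of $\kappa_\alpha$, so $K \equiv 0$ directly. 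Thus in every case covered by the statement the surface is flat.

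Next I would combine flatness with the CMC assumption through the Gauss equation. In $\mathbb{S}^3$ this equation reads $K = 1 + \lambda_1 \lambda_2$, so $K \equiv 0$ forces $\lambda_1 \lambda_2 \equiv -1$, while the CMC condition gives $\lambda_1 + \lambda_2 = 2H$ constant. Consequently $\lambda_1$ and $\lambda_2$ are the two roots of the fixed quadratic $t^2 - 2Ht - 1 = 0$, namely $H \pm \sqrt{H^2 + 1}$. Since the discriminant $4(H^2 + 1)$ is strictly positive, the roots are distinct and each varies continuously, hence constantly, along the surface. Therefore both principal curvatures are constant and the surface is isoparametric.

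Finally I would invoke the classification underlying \cite[Proposition~3.4]{Hoffman}, exactly as in the proof of Theorem \ref{TeoCMCbothcurvatzero}: a flat surface in $\mathbb{S}^3$ with (two distinct) constant principal curvatures must be a standard product of circles $\mathbb{S}^1(r) \times \mathbb{S}^1(\rho)$, which is precisely a CMC Clifford torus. This closes the argument, the role of the two alternative hypotheses being solely to force flatness within the translation-surface framework, after which the rigidity follows from the CMC hypothesis alone.

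I expect the only delicate point to be the appeal to Hoffman's result: one must be careful that it is the constancy of both principal curvatures (rather than merely of their sum $2H$ and product $-1$, which we already have) that feeds the isoparametric classification, and that the local translation-surface hypothesis is compatible with identifying the surface as a piece of a standard product. By contrast, the case analysis and the Gauss-equation computation are entirely routine.
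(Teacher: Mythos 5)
Your proposal is correct and takes essentially the same route as the paper's proof: flatness is deduced from either hypothesis (constant metric coefficients $E=G=1$, $F=C$, respectively the vanishing of $\kappa_\alpha$ in the curvature formula of Theorem \ref{theo_prop_surface_general}), and the conclusion then follows from the argument of \cite[Proposition~3.4]{Hoffman}, exactly as in Theorem \ref{TeoCMCbothcurvatzero}. Your intermediate Gauss-equation step, showing that $\lambda_1\lambda_2=-1$ and $\lambda_1+\lambda_2=2H$ force both principal curvatures to be constant, is detail the paper leaves implicit inside its citation of Hoffman, so it is a welcome elaboration rather than a different approach.
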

\begin{proof}
If $ F = \langle T_\alpha, T_\beta\rangle = C \in \mathbb{R}$, then this surface is flat. By Theorem \ref{theo_prop_surface_general}, the same occurs when $\kappa_\alpha \equiv 0$ (or $\kappa_\beta \equiv 0$). Again, from the proof of \cite[Proposition~3.4]{Hoffman}, this surface is a CMC Clifford torus.

\end{proof}

\section{Correspondence between Translation Surfaces in \texorpdfstring{$\mathbb{S}^3$}{TEXT} and \texorpdfstring{$\mathbb{R}^3$}{TEXT} }\label{SectionR3}

In this section, we present a result that establishes a connection between translation surfaces in $\mathbb{S}^3$ and translation surfaces in $\mathbb{R}^3$. The objective here is to understand the relationship between these surfaces and analyze them through both their intrinsic and extrinsic geometry. We then present some applications, drawing on examples and results from \cite{LopezHasanis,LopezPerdomo}. Accordingly, we state the following  
\begin{theorem}\label{theoS3R3}
    Let  $M \subset \mathbb{S}^3$ be a translation surface generated by curves  $\alpha$ and $\beta$ with curvatures $ \kappa_\alpha$, $ \kappa_\beta$ and, when $\kappa_\alpha \not\equiv 0$ and $\kappa_\beta \not\equiv 0$,  torsions $\tau_\alpha$ and  $\tau_\beta$. Then this surface is locally isometric to a translation surface $\tilde{M} \subset \mathbb{R}^3$, generated by curves $\tilde{\alpha}$ and $\tilde{\beta}$, whose curvatures  and torsions satisfy $\tilde{\kappa}_\alpha = \kappa_\alpha  $, $\tilde{\kappa}_\beta = \kappa_\beta  $, $\tilde{\tau}_\alpha = ( \tau_\alpha - 1 )$ and  $\tilde{\tau}_\beta = (\tau_\beta + 1)$, respectively. The reciprocal identification also holds. Moreover, the  mean curvatures $\tilde{H}$ and $H$, of $\tilde{M}$ and $M$, respectively, satisfy 
    $$ \tilde{H} = H + \frac{\langle T_\alpha , T_\beta \rangle}{\sqrt{1 - \langle T_\alpha , T_\beta \rangle^2 }}. $$
\end{theorem}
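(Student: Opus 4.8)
The plan is to establish a correspondence between translation surfaces in $\mathbb{S}^3$ and $\mathbb{R}^3$ by matching their generating curves via the Frenet-Serret equations, and the key observation is that the quaternionic frame equations \eqref{propertiesalpha} and \eqref{propertiesbeta} already reveal the correct identification. Looking at \eqref{propertiesalpha}, the frame $\{T_\alpha, N_\alpha, B_\alpha\}$ satisfies a Frenet-type system in $\mathbb{R}^4$ (indeed in $\mathcal{S} \cong \mathbb{R}^3$, since all these vectors lie in $\mathcal{S}$) in which the curvature is $\kappa_\alpha$ and the torsion is $(\tau_\alpha - 1)$. Similarly, \eqref{propertiesbeta} shows the right frame of $\beta$ obeys a Frenet system with curvature $\kappa_\beta$ and torsion $(\tau_\beta + 1)$. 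Thus the first step is to interpret $T_\alpha$ as the unit tangent of a curve $\tilde\alpha$ in $\mathbb{R}^3 \cong \mathcal{S}$ obtained by integrating $T_\alpha$, and $T_\beta$ as the unit tangent of a curve $\tilde\beta$ obtained by integrating $T_\beta$. By the fundamental theorem of space curves, these integrated curves have the prescribed curvatures $\tilde\kappa_\alpha = \kappa_\alpha$, $\tilde\kappa_\beta = \kappa_\beta$ and torsions $\tilde\tau_\alpha = \tau_\alpha - 1$, $\tilde\tau_\beta = \tau_\beta + 1$.

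The second step is to verify the local isometry. Define $\tilde M \subset \mathbb{R}^3$ as the translation surface $\tilde X(s,t) = \tilde\alpha(s) + \tilde\beta(t)$, where $\tilde\alpha' = T_\alpha$ and $\tilde\beta' = T_\beta$. The first fundamental form of $\tilde M$ is then computed from $\tilde X_s = \tilde\alpha' = T_\alpha$ and $\tilde X_t = \tilde\beta' = T_\beta$, giving $\tilde E = \langle T_\alpha, T_\alpha\rangle = 1$, $\tilde G = \langle T_\beta, T_\beta\rangle = 1$ and $\tilde F = \langle T_\alpha, T_\beta\rangle$. This matches exactly the first fundamental form of $M$ recorded in \eqref{firstfundamnetalform}, where $E = G = 1$ and $F = \langle T_\alpha, T_\beta\rangle$. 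Since both metrics agree under the correspondence $(s,t)\mapsto(s,t)$, the map sending $X(s,t)$ to $\tilde X(s,t)$ is a local isometry. This step is essentially automatic once the frames are identified, which is the elegant payoff of Definition \ref{def:frames}.

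The third step is the mean curvature relation. For the translation surface $\tilde M$ in $\mathbb{R}^3$, I would compute the coefficients of its second fundamental form using the standard Euclidean mean curvature formula $\tilde H = \tfrac{1}{2}\,\tfrac{\tilde e \tilde G - 2\tilde f \tilde F + \tilde E \tilde g}{\tilde E \tilde G - \tilde F^2}$, evaluating $\tilde e$, $\tilde f$, $\tilde g$ via the Frenet equations for $\tilde\alpha$ and $\tilde\beta$ with the torsions $(\tau_\alpha - 1)$ and $(\tau_\beta + 1)$. Carrying this through, the numerator should produce a combination of $\kappa_\alpha\langle B_\alpha, T_\beta\rangle$ and $\kappa_\beta\langle T_\alpha, B_\beta\rangle$ terms that nearly coincides with the $\mathbb{S}^3$ expression \eqref{Meancurvature}, the discrepancy being precisely the curvature term $-2\langle T_\alpha,T_\beta\rangle[\langle T_\alpha,T_\beta\rangle^2 - 1]$ that distinguishes the sphere from Euclidean space. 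Comparing \eqref{Meancurvature} against the Euclidean computation and solving for the difference $\tilde H - H$ should yield the stated correction $\frac{\langle T_\alpha, T_\beta\rangle}{\sqrt{1 - \langle T_\alpha, T_\beta\rangle^2}}$.

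**The hard part will be** handling the degenerate cases $\kappa_\alpha \equiv 0$ or $\kappa_\beta \equiv 0$ cleanly, since then the full Frenet frame is undefined and the normal/binormal vectors $N_\alpha, B_\alpha$ do not exist; one must argue the correspondence separately in these cases (where the relevant second-fundamental-form coefficient vanishes, as noted in the proof of Theorem \ref{theo_prop_surface_general}), and confirm the mean curvature formula still degenerates consistently — this is exactly the regime of Example \ref{ex_flat_torus}. A secondary subtlety is establishing the reciprocal direction: given a translation surface in $\mathbb{R}^3$ with generating curves of curvatures $\tilde\kappa$ and torsions $\tilde\tau$, one reconstructs curves in $\mathbb{S}^3$ with $\kappa = \tilde\kappa$ and $\tau_\alpha = \tilde\tau_\alpha + 1$, $\tau_\beta = \tilde\tau_\beta - 1$ by reversing the integration, which requires checking that the resulting frames $T_\alpha, T_\beta$ genuinely arise as $\overline{\alpha}\cdot t_\alpha$ and $\beta\cdot\overline{t_\beta}$ for bona fide arc-length curves $\alpha, \beta$ in $\mathbb{S}^3$ — again a consequence of the fundamental theorem for curves in $\mathbb{S}^3$ applied to the systems \eqref{propertiesalpha} and \eqref{propertiesbeta}.
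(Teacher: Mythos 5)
Your proposal follows essentially the same route as the paper's proof: read \eqref{propertiesalpha} and \eqref{propertiesbeta} as Euclidean Frenet systems with curvature $\kappa$ and torsions $\tau_\alpha-1$, $\tau_\beta+1$, build $\tilde{\alpha}$, $\tilde{\beta}$ by the fundamental theorem of curves so that their Frenet frames are the quaternionic frames, match the first fundamental forms ($\tilde{E}=E$, $\tilde{G}=G$, $\tilde{F}=F$) to get the local isometry, and obtain the mean curvature relation from the fact that $\tilde{e}=e$, $\tilde{g}=g$ while the mixed coefficient $f=\sqrt{1-\langle T_\alpha,T_\beta\rangle^2}$ in $\mathbb{S}^3$ becomes $\tilde{f}=0$ in $\mathbb{R}^3$; the degenerate cases you flag ($\kappa_\alpha\equiv 0$ and/or $\kappa_\beta\equiv 0$) are exactly the ones the paper treats by separate explicit case analysis, replacing the missing Frenet frame by a straight line in the direction of the constant vector $T_\alpha$ (or $T_\beta$). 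This is the paper's argument in all essentials, so the proposal is correct.
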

\begin{proof}
Suppose initially that $\kappa_\alpha = \kappa_\beta \equiv 0 $, then we write $ \overline{\alpha} \cdot t_\alpha = T_\alpha $, which is constant. We associate to this a curve $\tilde{\alpha}$ in $\mathbb{R}^3$, which is a straight line in the direction of $T_\alpha$. By symmetry, we proceed similarly for $\beta$ and $\tilde{\beta}$. Now, consider a translation surface in $\mathbb{R}^3$ defined by $\Psi(s,t) = \Tilde{\alpha}(s) + \Tilde{\beta}(t)  $. We have that 
$$\begin{array}{ccccccccc}
     \tilde{E} & = & \langle\Psi_s(s,t) , \Psi_s(s,t) \rangle & = & \langle \tilde{\alpha}'(s) , \tilde{\alpha}'(s) \rangle & = & \langle T_\alpha , T_\alpha \rangle & = & 1 , \\
     \tilde{G} & = & \langle\Psi_t(s,t) , \Psi_t(s,t) \rangle & = & \langle \tilde{\beta}'(t) , \tilde{\beta}'(t) \rangle & = & \langle T_\beta , T_\beta \rangle & = & 1 , \\
     \tilde{F} & = & \langle \Psi_s(s,t) , \Psi_t(s,t) \rangle & = & \langle \tilde{\alpha}'(s) , \tilde{\beta}'(t) \rangle & = & \langle T_\alpha , T_\beta \rangle . &  & \\
\end{array}$$ 
By  \cite{LopezHasanis}, we know also that 
$$ \tilde{N}(s,t) = \frac{T_\alpha \times T_\beta}{\sqrt{1 - \langle T_\alpha , T_\beta \rangle^2}} . $$
Thus 
$$\begin{array}{rllllll}
     \tilde{e} & = & \langle\Psi_ss(s,t) , N(s,t) \rangle & = & \langle \tilde{\alpha}'' , N \rangle & = & 0 ,  \\
     \tilde{g} & = & \langle\Psi_tt(s,t) , N(s,t) \rangle & = & \langle \tilde{\beta}'' , N \rangle & = & 0 , \\
     \tilde{f} & = & \langle\Psi_st(s,t) , N(s,t) \rangle & = & \langle 0 , N \rangle & = & 0 . \\
\end{array}$$
By Theorem \ref{theo_prop_surface_general}, we have $\tilde{E} = E = \tilde{G} = G = 1 $, $\tilde{F} = F$ and $\tilde{e} = e = \tilde{g} = g = 0$. Consequently  
$$ \tilde{K} = - \frac{\tilde{e}\tilde{g} - \tilde{f}}{\tilde{E}\tilde{G} - \tilde{F}^2} = 0 =  K .  $$
where $\tilde{K}$ and $K$ are the Gaussian curvatures in $\mathbb{R}^3$ and $\mathbb{S}^3$ respectively. Moreover, we have 
$$  \tilde{H} = \frac{\tilde{e}\tilde{G} - 2\tilde{f}\tilde{F} + \tilde{E}\tilde{g}}{2(\tilde{E}\tilde{G} - \tilde{F}^2)} =  H + \frac{\langle T_\alpha , T_\beta \rangle}{\sqrt{1 - \langle T_\alpha , T_\beta \rangle^2 }} = 0,  $$
where $\tilde{H}$ and $H$ are the mean curvatures in $\mathbb{R}^3$ and $\mathbb{S}^3$ respectively.

Suppose now that $\kappa_\alpha  \equiv 0 $ and $\kappa_\beta \not\equiv 0$ (or symmetrically $\kappa \equiv 0$, $\kappa_\alpha \not\equiv 0$). With the vectors $T_\beta$, $N_\beta$, $B_\beta$ and equations of Frenet-frame kind 
$$\begin{array}{rcl}
     T_\beta' & = & \kappa_\beta N_\beta ,  \\
     N_\beta' & = & -\kappa_\beta T_\beta + ( \tau_\beta + 1 ) B_\beta , \\
     B_\alpha' & = & -( \tau_\beta + 1 ) N_\beta . 
\end{array} $$
We can associate a unique arc length  curve $\Tilde{\beta}$ in $\mathbb{R}^3$, up to rigid motion, whose curvature and torsion satisfy $\tilde{\kappa}_\beta = \kappa_\beta$ and $\tilde{\tau}_\beta = ( \tau_\beta + 1 )$, respectively, and whose Frenet frame is given by $\{ T_\beta, N_\beta, B_\beta \}$.
 
As before, we associate to $\alpha$ a curve $\tilde{\alpha}$ in $\mathbb{R}^3$ that is a straight line parallel to $T_\alpha$. Consider now a translation surface in $\mathbb{R}^3$ defined by $\Psi(s,t) = \Tilde{\alpha}(s) + \Tilde{\beta}(t)  $. We have that 
$$ \tilde{E} =  \langle T_\alpha , T_\alpha \rangle  = 1 ,  \ \  \tilde{G} = \langle T_\beta , T_\beta \rangle = 1,  \ \  \tilde{F} =  \langle T_\alpha , T_\beta \rangle . $$ 
Again, $ \tilde{N}(s,t) = T_\alpha \times T_\beta / \sqrt{1 - \langle T_\alpha , T_\beta \rangle^2}$. Thus 
$$\tilde{e} = \langle \tilde{\alpha}'' , N \rangle =   0, \ \  \tilde{g} = \langle \tilde{\beta}'' , N \rangle = - \dfrac{\kappa_\beta \langle T_\alpha , B_\beta \rangle }{\sqrt{1 - \langle T_\alpha , T_\beta \rangle^2} }, \ \ \tilde{f} = \langle 0 , N \rangle =  0 . $$
Once again, by Theorem \eqref{theo_prop_surface_general}, we conclude that $\tilde{E} = E = \tilde{G} = G = 1 $, $\tilde{F} = F$ and $\tilde{e} = e = 0 $, $\tilde{g} = g$. Hence, $ \tilde{K} =  0 =  K  $ and also  
$$  \tilde{H} =  \frac{- \kappa_\beta  \langle T_\alpha , B_\alpha \rangle}{2(1 - \langle T_\alpha , T_\beta \rangle^2)^{3/2}} - \frac{\langle T_\alpha , T_\beta \rangle}{\sqrt{1 - \langle T_\alpha , T_\beta \rangle^2 }} +   \frac{\langle T_\alpha , T_\beta \rangle}{\sqrt{1 - \langle T_\alpha , T_\beta \rangle^2 }} =  H + \frac{\langle T_\alpha , T_\beta \rangle}{\sqrt{1 - \langle T_\alpha , T_\beta \rangle^2 }} . $$

From now on, suppose that $\kappa_\alpha\not\equiv 0$ and  $ \kappa_\beta \not\equiv 0 $.   Consider now a translation surface in $\mathbb{R}^3$ defined by $\Psi(s,t) = \Tilde{\alpha}(s) + \Tilde{\beta}(t)  $. We have that 
$$ \tilde{E} = \langle T_\alpha , T_\alpha \rangle = 1, \ \ \tilde{G} = \langle T_\beta , T_\beta \rangle = 1, \ \ \tilde{F} = \langle T_\alpha , T_\beta \rangle . $$
One more time, by \cite{LopezHasanis}, we know  that $ \tilde{N}(s,t) = (T_\alpha \times T_\beta) / \sqrt{1 - \langle T_\alpha , T_\beta \rangle^2} $. Thus  
$$\tilde{e} =  \langle \tilde{\alpha}'' , N \rangle =  \dfrac{\kappa_\alpha \langle B_\alpha , T_\alpha \rangle }{\sqrt{1 - \langle T_\alpha , T_\beta \rangle^2}}, \ \ \tilde{g} = \langle \tilde{\beta}'' , N \rangle =  - \dfrac{\kappa_\beta \langle T_\alpha , B_\beta \rangle }{\sqrt{1 - \langle T_\alpha , T_\beta \rangle^2} },  \ \   \tilde{f} = \langle 0 , N \rangle = 0 . $$
Thus $\tilde{E} = E = \tilde{G} = G = 1 $, $\tilde{F} = F$ and $\tilde{e} = e$ and $\tilde{g} = g$. Hence 
$$ \tilde{K} = - \frac{\kappa_\alpha \kappa_\beta \langle B_\alpha , T_\alpha \rangle \langle T_\alpha , B_\beta \rangle}{1 - \langle T_\alpha , T_\beta \rangle^2} = K ,  $$
and   
$$  \tilde{H} = \frac{\kappa_\alpha \langle B_\alpha , T_\alpha \rangle - \kappa_\beta  \langle T_\alpha , B_\alpha \rangle}{2(1 - \langle T_\alpha , T_\beta \rangle^2)^{3/2}}  = H + \frac{\langle T_\alpha , T_\beta \rangle}{\sqrt{1 - \langle T_\alpha , T_\beta \rangle^2 }} . $$

Now, consider a curve $\tilde{\alpha}$ in $\mathbb{R}^3$. If $\tilde{\alpha}$ is a straight line, we correspond it with a geodesic circle in $\mathbb{S}^3$. Otherwise, let $\{T_{\tilde{\alpha}}, N_{\tilde{\alpha}}, B_{\tilde{\alpha}} \}$ denote its Frenet frame, $\tilde{\kappa}_\alpha$ and $\tilde{\tau}_\alpha$ its curvature and torsion, respectively. Let $\alpha$ be the unique curve in $\mathbb{S}^3$, up to rigid motion, with curvature $\kappa_\alpha = \tilde{\kappa}_\alpha$ and torsion  $\tau_\alpha = \tilde{\tau}_\alpha + 1 $. We know that $\{T_\alpha, N_\alpha, B_\alpha\}$ satisfy \eqref{propertiesalpha}, which correspond to the Frenet formulas for $\tilde{\alpha}$ in $\mathbb{R}^3$. Now, using Theorem  \eqref{theo_prop_surface_general} and \cite{LopezHasanis}, we conclude the result. 
\end{proof}

As applications of this Theorem, we have the following
\begin{corollary}
    Let $\tilde{M} \subset \mathbb{R}^3$ and $M \subset \mathbb{S}^3$ be translation surfaces in the conditions of Theorem \ref{theoS3R3}. If they both are CMC, then they are also flat, $M$ is a CMC flat torus and  $\tilde{M}$ is a plane or a cylinder. 
\end{corollary}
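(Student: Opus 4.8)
The plan is to use the mean-curvature relation of Theorem~\ref{theoS3R3} to force $\langle T_\alpha,T_\beta\rangle$ to be constant, and then to feed this into the flat-case rigidity results already proved. Write $F=\langle T_\alpha,T_\beta\rangle$, so that by the regularity condition \eqref{regularity} one has $F\in(-1,1)$ pointwise. Since both $M$ and $\tilde M$ are CMC, $H$ and $\tilde H$ are constants, and Theorem~\ref{theoS3R3} yields
$$\frac{F}{\sqrt{1-F^2}}=\tilde H-H,$$
whose right-hand side is a constant. The function $\phi(x)=x/\sqrt{1-x^2}$ has derivative $\phi'(x)=(1-x^2)^{-3/2}>0$ on $(-1,1)$, hence is a strictly increasing bijection onto $\mathbb{R}$; therefore $\phi(F)$ being constant forces $F\equiv C$ for some $C\in(-1,1)$.

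First I would deduce flatness from this. With $F\equiv C$, Theorem~\ref{prop_FlatTorus} (equivalently, the metric coefficients $E=G=1$, $F=C$ computed in Theorem~\ref{theo_prop_surface_general}) shows that $M$ is flat; since the correspondence of Theorem~\ref{theoS3R3} is a local isometry with $\tilde K=K$, the surface $\tilde M$ is flat as well. Next, $M$ is a CMC translation surface with $\langle T_\alpha,T_\beta\rangle=C\in(-1,1)$, so Theorem~\ref{TeoCMCcurvatzero} applies verbatim and identifies $M$ as a CMC Clifford torus.

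It then remains to classify $\tilde M$, and here I would argue extrinsically in $\mathbb{R}^3$. Flatness gives $\tilde K=\lambda_1\lambda_2=0$, so one principal curvature, say $\lambda_1$, vanishes identically, and the CMC condition $\lambda_1+\lambda_2=2\tilde H$ then forces $\lambda_2\equiv 2\tilde H$ to be constant. If $\tilde H=0$, both principal curvatures vanish, $\tilde M$ is totally geodesic, hence a piece of a plane. If $\tilde H\neq 0$, the identically vanishing principal curvature makes $\tilde M$ a developable surface whose rulings are lines of curvature, while the other principal curvature is the nonzero constant $2\tilde H$; this is exactly the local model of a circular cylinder. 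Thus $\tilde M$ is a plane or a cylinder.

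The computations are routine; the only delicate point is the strict monotonicity of $\phi$, which is precisely what upgrades ``the mean-curvature defect is constant'' to ``$\langle T_\alpha,T_\beta\rangle$ is constant'' and thereby unlocks the flat-case theorems. The final classification of $\tilde M$ is a classical fact about flat CMC surfaces in $\mathbb{R}^3$ and could alternatively be quoted from \cite{LopezHasanis}.
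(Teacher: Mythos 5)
Your proposal is correct and follows precisely the route the paper intends: the corollary is stated there without proof as a direct application of Theorem \ref{theoS3R3}, and your chain of reasoning---using strict monotonicity of $x \mapsto x/\sqrt{1-x^2}$ to upgrade constancy of $\tilde H - H$ to constancy of $\langle T_\alpha , T_\beta \rangle$, then invoking the flatness of the metric with $E=G=1$, $F$ constant, Theorem \ref{TeoCMCcurvatzero} for $M$, and the classical classification of flat CMC surfaces in $\mathbb{R}^3$ for $\tilde M$---is exactly the intended combination of the paper's results. The only cosmetic point is that flatness alone gives $\lambda_1\lambda_2=0$ pointwise rather than one principal curvature vanishing identically; it is the pair of conditions $\lambda_1\lambda_2=0$ and $\lambda_1+\lambda_2=2\tilde H$ together that pins the principal curvatures to $\{0,2\tilde H\}$ at every point, which is what your subsequent sentence actually uses.
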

and 
\begin{example}
    Let $M \subset \mathbb{S}^3$ be a translation surface such that $ \langle T_\alpha , T_\alpha \rangle = C $. Then, by Theorem \ref{TeoCMCcurvatzero}, this surface is a flat CMC torus, and thus the associated surface $\tilde{M} \subset \mathbb{R}^3$ is also flat and CMC. Therefore, $\tilde{M}$ must be a plane or a cylinder. Moreover, by Proposition \ref{prop_FlatTorus}, the condition $\langle T_\alpha, T_\beta \rangle = C$ implies that either $\kappa_\alpha \equiv \kappa_\beta \equiv 0 $, or $\kappa_\beta \equiv 0 $ and $\alpha$ is a general helix. In the case $\kappa_\alpha \equiv \kappa_\beta \equiv 0 $, $\tilde{M}$ is a plane. If $\alpha$ is a helix, then by the proof of  Proposition \ref{prop_FlatTorus}, $\langle B_\alpha, T_\beta\rangle$ is constant. Thus, by Theorem \eqref{theo_prop_surface_general}, if $M$ is CMC, then $\kappa_\alpha$ is constant, which implies that $\tau_\alpha$ is constant as well. Hence, $\tilde{\alpha}$ is a helix in $\mathbb{R}^3$. 
\end{example}
    
The next ones are motivated by the works  \cite{LopezHasanis} and \cite{LopezPerdomo}. In particular, they approach the case of circular helices in $\mathbb{R}^3$. 
\begin{corollary}\label{corolary_JP}
    Let $\tilde{M} \subset \mathbb{R}^3$ and $M \subset \mathbb{S}^3$ be translation surfaces in the conditions of Theorem \ref{theoS3R3}. If $\kappa_\alpha \ne 0 $ and $|\tau_\alpha| \ne 1 $ are constant then $\alpha(s) \cdot \alpha(t)$ is not minimal in $\mathbb{S}^3$. 
\end{corollary}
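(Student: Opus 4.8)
The plan is to reduce minimality of $M=\alpha\cdot\alpha$ to a single functional identity in the two parameters and then defeat it by a frequency (harmonic) count. Since the second generating curve is again $\alpha$, we have $\kappa_\beta=\kappa_\alpha=:\kappa$ (a nonzero constant) and $\tau_\beta=\tau_\alpha=:\tau$ with $|\tau|\neq1$; recall that $\beta$ is read through the right frame, so its relevant coefficient in \eqref{propertiesbeta} is $\tau+1$ while that of $\alpha$ in \eqref{propertiesalpha} is $\tau-1$. By Theorem~\ref{theoS3R3} this is precisely the statement that $M$ corresponds to the $\mathbb{R}^3$ translation surface spanned by two circular helices of common curvature $\kappa$ and torsions $\tau-1$ and $\tau+1$, whose Frenet frames are the frames $\{T_\alpha,N_\alpha,B_\alpha\}$ and $\{T_\beta,N_\beta,B_\beta\}$ living in $\mathcal{S}\cong\mathbb{R}^3$; I would argue directly in $\mathbb{S}^3$. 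Using \eqref{eqmincurvatsimp}, and writing $F=\langle T_\alpha,T_\beta\rangle$, the surface is minimal if and only if
\[
\kappa\bigl(\langle B_\alpha,T_\beta\rangle-\langle T_\alpha,B_\beta\rangle\bigr)=2F\,(F^2-1).
\]

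Next I would exploit that $\kappa,\tau$ are constant. Then \eqref{propertiesalpha} and \eqref{propertiesbeta} are linear ODEs with constant coefficients for curves in the fixed space $\mathcal{S}$: the left Darboux vector $\omega_\alpha=(\tau-1)T_\alpha+\kappa B_\alpha$ satisfies $\omega_\alpha'=0$, so the left frame rotates rigidly about the fixed axis $\hat\omega_\alpha$ with constant angular speed $a=\sqrt{\kappa^2+(\tau-1)^2}$, and likewise the right frame rotates about a fixed axis $\hat\omega_\beta$ with speed $b=\sqrt{\kappa^2+(\tau+1)^2}$ (this is the mechanism behind Proposition~\ref{lemma_proper_helix}). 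In particular each of $T_\alpha(s),B_\alpha(s)$ is a constant vector along $\hat\omega_\alpha$ plus a circular motion of frequency $a$ and amplitude $\kappa/a\neq0$ in the plane $\hat\omega_\alpha^{\perp}$, and analogously for the right frame with frequency $b$. Since $a,b\geq\kappa>0$, both frequencies are positive.

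The key step is a harmonic balance in the variable $s$. By the previous paragraph every inner product on the left-hand side is a trigonometric polynomial whose $s$-frequencies lie in $\{0,a\}$, and the term $-2F$ shares this property; only the cubic term $2F^3$ can carry the frequency $3a$ in $s$. Writing the frequency-$a$ part of $F$ as $\cos(as)\,g_1(t)+\sin(as)\,g_2(t)$ with $g_i(t)=\tfrac{\kappa}{a}\langle e_i,T_\beta(t)\rangle$, where $e_1,e_2$ span $\hat\omega_\alpha^{\perp}$, the $3a$-harmonic of $2F^3$ has amplitude $\tfrac12\bigl(g_1^2+g_2^2\bigr)^{3/2}$. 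Matching harmonics forces $g_1\equiv g_2\equiv0$, i.e. $g_1^2+g_2^2=\tfrac{\kappa^2}{a^2}\bigl|\,T_\beta(t)-\langle T_\beta(t),\hat\omega_\alpha\rangle\hat\omega_\alpha\,\bigr|^2\equiv0$. Hence $T_\beta(t)$ would be parallel to the fixed vector $\hat\omega_\alpha$ for all $t$, so $T_\beta$ is constant and $\kappa_\beta=0$, contradicting $\kappa_\beta=\kappa\neq0$. Therefore the minimality identity cannot hold identically and $M$ is not minimal.

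I expect the genuine obstacle to be exactly this harmonic-matching step: one must verify that the $3a$-harmonic produced by $2F^3$ is matched by nothing else (which is why isolating the $s$-frequencies while treating $t$ as a parameter is essential), and that its amplitude $(g_1^2+g_2^2)^{3/2}$ cannot vanish identically in $t$ unless $\kappa_\beta=0$. The hypotheses enter cleanly here: $\kappa\neq0$ guarantees $a>0$ and a nonzero rotation amplitude $\kappa/a$, so the frequencies $0,a,3a$ are distinct and the cubic term is genuinely present, while $|\tau|\neq1$ keeps both frames in the full cone regime, the boundary torsions $\tau=\pm1$ being treated separately in the dedicated theorems of Section~\ref{section_minimal}. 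A symmetric balance in $t$ using the frequency $3b$ gives an independent route to the same conclusion.
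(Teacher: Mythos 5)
Your argument is correct, but it follows a genuinely different route from the paper's proof of this corollary. The paper disposes of the statement in two lines: by Theorem \ref{theoS3R3}, the surface $\alpha(s)\cdot\alpha(t)$ corresponds to a translation surface of $\mathbb{R}^3$ generated by two circular helices of equal curvature $\kappa_\alpha$ but distinct torsions $\tau_\alpha-1$ and $\tau_\alpha+1$, and it then invokes L\'opez--Hasanis' classification (a minimal translation surface of $\mathbb{R}^3$ with a circular-helix generator is a helicoid, generated by two congruent helices) to rule out minimality. You instead stay entirely inside $\mathbb{S}^3$: constancy of $\kappa_\alpha$ and $\tau_\alpha$ turns \eqref{propertiesalpha}--\eqref{propertiesbeta} into constant-coefficient systems with fixed Darboux vectors (your computation $\omega_\alpha'=0$ is right), so for each fixed $t$ every term of the minimality identity \eqref{eqmincurvatsimp} is a trigonometric polynomial in $s$ with frequencies in $\{0,a,2a\}$ except the $3a$-harmonic of $2F^3$, whose amplitude $\tfrac12\bigl(g_1^2+g_2^2\bigr)^{3/2}$ you compute correctly (the coefficient $\tfrac14$ of $\cos 3\theta$ in $\cos^3\theta$, doubled); since $0<a<2a<3a$ are distinct frequencies and distinct-frequency harmonics are linearly independent on an interval, this amplitude must vanish, forcing $T_\beta$ parallel to $\hat\omega_\alpha$, hence constant, hence $\kappa_\beta=0$, a contradiction. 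As for what each approach buys: the paper's reduction is very short but delicate, because the correspondence of Theorem \ref{theoS3R3} shifts the mean curvature by $F/\sqrt{1-F^2}$, so minimality of $M$ in $\mathbb{S}^3$ does \emph{not} translate into minimality of the associated surface in $\mathbb{R}^3$, and feeding the situation into a classification of minimal surfaces of $\mathbb{R}^3$ requires extra justification that the paper leaves implicit; your direct argument bypasses this subtlety entirely and is self-contained, using only \eqref{eqmincurvatsimp} and the frame equations. In spirit, your method is closest to the paper's own Section \ref{section_minimal}: Proposition \ref{lemma_proper_helix} and Theorem \ref{Theo_min_2helix} prove a statement subsuming this corollary by explicitly parametrizing the circles traced by the frames in $\mathcal{S}$ and evaluating at chosen points --- your harmonic balance is a cleaner packaging of the same mechanism, and, as you observe, it never actually uses $|\tau_\alpha|\neq 1$, so it also covers the boundary torsions treated separately in Theorem \ref{theokconsttau-1}.
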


\begin{proof}
     Following \cite[Theorem~3.2]{LopezHasanis}, let $\alpha$ in $\mathbb{R}^3$ be an arc length curve with constant curvature and torsion. Then a translation surface $\tilde{M} \subset \mathbb{R}^3$, locally parameterized as $\tilde{X}(s,t) = \alpha(s) + \alpha(t)$, is minimal if and only if is a helicoid. By Theorem \ref{theoS3R3}, we conclude that $M \subset \mathbb{S}^3$ cannot be minimal.
\end{proof}

\begin{remark}
    For corollary \ref{corolary_JP}, it is important to keep the regularity condition, that is, $\langle T_\alpha, T_\beta \rangle \ne 1 $. 
\end{remark}

About curves with constant curvature and torsion, we have
\begin{corollary}
    If $\tilde{M} \subset \mathbb{R}^3$ is minimal and one of the curves is a circular helix. Then  the surface $M \subset \mathbb{S}^3$ is neither CMC nor flat, and its mean curvature is given by  
    $$ H = - \frac{\langle T_\alpha , T_\beta \rangle}{\sqrt{1 - \langle T_\alpha , T_\beta \rangle^2 }} .  $$
\end{corollary}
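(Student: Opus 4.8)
The plan is to read the mean-curvature identity straight off the correspondence and then obtain each non-existence assertion by reducing it to a rigidity statement already available. Since $\tilde{M}$ is minimal we have $\tilde{H}\equiv 0$; substituting this into the relation $\tilde{H}=H+\langle T_\alpha,T_\beta\rangle/\sqrt{1-\langle T_\alpha,T_\beta\rangle^2}$ provided by Theorem~\ref{theoS3R3} gives at once
$$ H=-\frac{\langle T_\alpha,T_\beta\rangle}{\sqrt{1-\langle T_\alpha,T_\beta\rangle^2}}, $$
which is the asserted formula. The regularity condition \eqref{regularity} ensures $\langle T_\alpha,T_\beta\rangle\in(-1,1)$, so the right-hand side is well defined.

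For the flatness claim I would use that Theorem~\ref{theoS3R3} also yields $K=\tilde K$, so it suffices to show that $\tilde M$ is not flat. If $\tilde M$ were flat, then being simultaneously minimal ($\tilde H=0$) and flat ($\tilde K=\lambda_1\lambda_2=0$) would force $\lambda_1+\lambda_2=0$ and $\lambda_1\lambda_2=0$, hence $\lambda_1=\lambda_2\equiv 0$; thus $\tilde M$ would be totally geodesic, i.e. an open piece of an affine plane. Writing $\tilde M=\tilde\alpha(s)+\tilde\beta(t)$ and fixing one parameter shows that each generating curve then lies in a fixed affine plane and therefore has identically vanishing torsion. This contradicts the hypothesis that one generating curve is a circular helix, whose torsion is a nonzero constant. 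Hence $\tilde M$, and with it $M$, is not flat.

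Finally, for the CMC assertion I would argue by contradiction. If $M$ were CMC, then $H$ is constant; since $u\mapsto u/\sqrt{1-u^2}$ is strictly increasing, hence injective, on $(-1,1)$, the formula above forces $\langle T_\alpha,T_\beta\rangle$ to be constant. By the characterization that constancy of $\langle T_\alpha,T_\beta\rangle$ already implies flatness (Theorem~\ref{prop_FlatTorus}), $M$ would then be flat, contradicting the previous step; hence $M$ is not CMC. The only step that requires genuine care is the flatness argument, where one must make precise that a minimal flat surface in $\mathbb{R}^3$ is totally geodesic and that a planar translation surface has planar generating curves, so that the nonzero torsion of the circular helix produces the contradiction; every other step is a direct substitution into, or the monotonicity of, the mean-curvature formula.
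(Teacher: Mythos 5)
Your proof is correct, but it takes a genuinely different route from the paper's. The paper's proof is a two-line appeal to external rigidity: by \cite[Theorem~3.2]{LopezHasanis}, a minimal translation surface in $\mathbb{R}^3$ with one generating curve a circular helix must be the helicoid (with the other curve a congruent helix), and then Theorem~\ref{theoS3R3} is invoked to transfer everything to $\mathbb{S}^3$ --- non-flatness comes from $K=\tilde{K}\neq 0$ since the helicoid is nowhere flat, while the non-CMC assertion is left essentially implicit (note it does \emph{not} follow just from knowing $\tilde{M}$ is a helicoid, since the helicoid is itself CMC). You bypass the classification entirely: you get the formula for $H$ by substituting $\tilde{H}=0$ into the mean-curvature relation of Theorem~\ref{theoS3R3}; you get non-flatness from the elementary fact that a minimal flat surface in $\mathbb{R}^3$ is totally geodesic, hence a plane, whose generating curves would be planar and torsion-free, contradicting the nonzero torsion of the circular helix; and you get non-CMC from the injectivity of $u\mapsto u/\sqrt{1-u^2}$ on $(-1,1)$ (legitimate, by the regularity condition \eqref{regularity}) combined with the paper's own Theorem~\ref{prop_FlatTorus}, which says constancy of $\langle T_\alpha,T_\beta\rangle$ forces flatness. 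The trade-off: the paper's route yields stronger structural information ($\tilde{M}$ is a helicoid and both generating curves are congruent circular helices) at the price of an external citation and an unproved logical step; your route is self-contained within the paper's own results, supplies exactly the reasoning the paper glosses over for the CMC claim, and in fact uses less than the full hypothesis --- for non-flatness you only need one generating curve with somewhere-nonvanishing torsion, not a circular helix.
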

\begin{proof}
     By \cite[Theorem~3.2]{LopezHasanis}, if $\tilde{M} \subset \mathbb{R}^3$ is a minimal translation surface and one of the generating curves has constant curvature and torsion, then the other curve is a congruent curve with the same curvature and torsion, and $\tilde{M}$ is the helicoid.  By Theorem \ref{theoS3R3}, the surface $M \subset \mathbb{S}^3$ is neither CMC nor flat, and its mean curvature is given by  
    $$ H = - \frac{\langle T_\alpha , T_\beta \rangle}{\sqrt{1 - \langle T_\alpha , T_\beta \rangle^2 }} . $$    
\end{proof}

\section{Results on Minimal Translation Surfaces}\label{section_minimal}

In this section we deal with minimal translation surfaces. The first Theorem of this section can be viewed as a generalization of Corollary \ref{corolary_JP}.

\begin{theorem}\label{theokconsttau-1}
   There are no minimal translation surfaces $X : I \times J \rightarrow  \mathbb{S}^3$,  $X(s,t)=\alpha(s)\cdot \beta(t)$ with $\kappa_\alpha \equiv C \in \mathbb{R}$, $C \ne 0 $ and $\tau_\alpha = 1 $.
\end{theorem}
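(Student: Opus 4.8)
The plan is to exploit the very rigid structure that the hypotheses $\tau_\alpha \equiv 1$ and $\kappa_\alpha \equiv C \neq 0$ impose on the left frame of $\alpha$, and then to extract a frequency obstruction from the minimality equation \eqref{eqmincurvatsimp}. First I would observe that, by the frame equations \eqref{propertiesalpha}, the condition $\tau_\alpha = 1$ forces $B_\alpha' = 0$, so $B_\alpha \equiv B_0$ is a constant unit vector in $\mathcal{S}$, while $T_\alpha' = C N_\alpha$ and $N_\alpha' = -C T_\alpha$. Integrating, $T_\alpha$ and $N_\alpha$ sweep out the great circle in the plane $P \subset \mathcal{S}$ orthogonal to $B_0$; explicitly, $T_\alpha(s) = \cos(Cs)\,T_\alpha(0) + \sin(Cs)\,N_\alpha(0)$. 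I would then substitute this, together with $B_\alpha \equiv B_0$, into the minimality condition \eqref{eqmincurvatsimp}.

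The heart of the argument is a harmonic analysis in the variable $s$ at each fixed $t$. Writing $\theta = Cs$, the quantity $F := \langle T_\alpha, T_\beta \rangle$ becomes a pure first harmonic $\rho(t)\cos(\theta - \phi(t))$, and likewise $\langle T_\alpha, B_\beta \rangle$ is a first harmonic in $\theta$ with no constant part, whereas $\langle B_0, T_\beta \rangle$ does not depend on $s$. Consequently the left-hand side of \eqref{eqmincurvatsimp} is a constant plus a first harmonic in $\theta$. The right-hand side, however, is $2F^3 - 2F$, and the cubic term $\cos^3(\theta-\phi) = \tfrac34\cos(\theta-\phi) + \tfrac14\cos 3(\theta-\phi)$ produces a genuine third harmonic with amplitude proportional to $\rho(t)^3$, while contributing nothing constant. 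Since $1,\ \cos\theta,\ \sin\theta,\ \cos 3\theta,\ \sin 3\theta$ are linearly independent on any interval, matching coefficients forces, for every $t$, both $C\langle B_0, T_\beta(t) \rangle = 0$ (the constant terms) and $\rho(t)^3 = 0$ (the third harmonic).

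From $C \neq 0$ the first identity gives $\langle B_0, T_\beta(t) \rangle = 0$ for all $t$, while $\rho(t) = 0$ gives $\langle T_\alpha(s), T_\beta(t) \rangle \equiv 0$, that is, $T_\beta(t)$ is orthogonal to the whole plane $P = \mathrm{span}\{T_\alpha(0), N_\alpha(0)\}$. Since $T_\beta(t) \in \mathcal{S} = \mathbb{S}^2$ and $P^\perp \cap \mathcal{S} = \{\pm B_0\}$, this forces $T_\beta(t) = \pm B_0$, whence $\langle B_0, T_\beta(t) \rangle = \pm 1$, contradicting $\langle B_0, T_\beta(t) \rangle = 0$. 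This contradiction shows that no such minimal surface exists.

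I expect the main obstacle to be expository rather than substantive: one must argue cleanly that the $s$-dependence of each term is a trigonometric polynomial of the stated degree and invoke linear independence of the harmonics on the (possibly short) parameter interval $I$, which follows from real-analyticity. A secondary point to handle with care is that all the relevant vectors $T_\alpha, N_\alpha, B_\alpha, T_\beta, B_\beta$ lie in the two-sphere $\mathcal{S}$ of imaginary unit quaternions, so that the orthogonality conclusions can be read off in the three-dimensional space of imaginary quaternions; this is precisely what makes $P^\perp \cap \mathcal{S} = \{\pm B_0\}$ and closes the argument. Alternatively, one could reach the same two identities by differentiating \eqref{eqmincurvatsimp} in $s$ and using $T_\alpha' = C N_\alpha$, but the frequency viewpoint makes the obstruction most transparent.
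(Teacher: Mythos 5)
Your proof is correct, but it reaches the contradiction by a different mechanism than the paper's own argument. The paper never integrates the frame explicitly: it differentiates the minimality equation \eqref{eqmincurvatsimp} three times in $s$ (using $\kappa_\alpha \equiv C$ and $\tau_\alpha = 1$, so that $T_\alpha' = C N_\alpha$, $N_\alpha' = -C T_\alpha$, $B_\alpha' = 0$), arrives at $12\kappa_\alpha\langle N_\alpha,T_\beta\rangle^3 + 36\kappa_\alpha\langle T_\alpha,T_\beta\rangle^2\langle N_\alpha,T_\beta\rangle = 0$, and then splits into cases: $\langle N_\alpha,T_\beta\rangle \neq 0$ forces $\langle N_\alpha,T_\beta\rangle^2 = -3\langle T_\alpha,T_\beta\rangle^2$, impossible unless both vanish; while $\langle N_\alpha,T_\beta\rangle = 0$ forces $\langle T_\alpha,T_\beta\rangle = 0$, hence $T_\beta = \pm B_\alpha$ constant and $\kappa_\beta \equiv 0$, after which minimality gives $\langle B_\alpha,T_\beta\rangle = 0$, contradicting $T_\beta = \pm B_\alpha$. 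You instead solve the constant-coefficient frame ODE, so every term of \eqref{eqmincurvatsimp} becomes a trigonometric polynomial in $Cs$ of known degree, and matching Fourier coefficients yields both key identities at once — $\langle B_0,T_\beta\rangle \equiv 0$ from the constant term and $\langle T_\alpha,T_\beta\rangle \equiv 0$ from the third harmonic — with no case analysis; your endgame (a unit vector of $\mathcal{S}$ orthogonal to the orthonormal basis $\{T_\alpha(0),N_\alpha(0),B_0\}$ of the imaginary $3$-space) is the same contradiction the paper reaches. What your route buys is transparency: the cubic nonlinearity visibly creates a third harmonic that the left-hand side cannot produce, at the modest cost of justifying linear independence of $1,\cos\theta,\sin\theta,\cos 3\theta,\sin 3\theta$ on a possibly short interval (analyticity, as you note). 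Two small points for a final writeup: at values of $t$ where $\kappa_\beta(t) = 0$ the vector $B_\beta$ is undefined, but there the term $\kappa_\beta\langle T_\alpha,B_\beta\rangle$ is zero — equivalently it always equals $\langle T_\alpha(s), V(t)\rangle$ with $V = (\beta''+\beta)\cdot\overline{\beta'}$ — so it is a first harmonic in $s$ in all cases; and your argument is insensitive to the sign convention in \eqref{Meancurvature}, since either choice $\pm 2F(F^2-1)$ of right-hand side is an odd cubic in $F$ with the same harmonic content.
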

\begin{proof}
Differentiating equation \eqref{eqmincurvatsimp} with respect to $s$ gives  
\begin{equation}\label{eqmincurvatsimpder}
    \kappa'_{\alpha} \langle B_\alpha , T_\beta  \rangle - \kappa_{\alpha} (\tau_\alpha - 1) \langle N_\alpha ,  T_\beta \rangle - \kappa_{\beta}\kappa_\alpha \langle N_\alpha ,  B_\beta \rangle = 2 \kappa_\alpha \langle N_\alpha , T_\beta \rangle [ 3\langle T_\alpha, T_\beta \rangle^2 - 1 ].
\end{equation}
If $\tau_\alpha = 1$ and $\kappa_\alpha \equiv C \ne 0$, $C \in \mathbb{R}$, then equation \eqref{eqmincurvatsimpder} becomes
$$ - \kappa_\beta \langle N_\alpha ,  B_\beta \rangle = 2  \langle N_\alpha , T_\beta \rangle [ 3\langle T_\alpha, T_\beta \rangle^2 - 1 ]. $$ 
Differentiating again with respect to $s$ and using \eqref{eqmincurvatsimp}, we have  
$$ \kappa_{\alpha} \langle B_\alpha , T_\beta  \rangle = - 2 \langle T_\alpha , T_\beta \rangle [ 2\langle T_\alpha, T_\beta \rangle^2 - 6 \langle N_\alpha , T_\beta \rangle^2 ]. $$
We differentiate again with respect to $s$ and simplify to obtain 
$$  12 \kappa_\alpha \langle N_\alpha , T_\beta \rangle^3 +  36 \kappa_\alpha \langle T_\alpha , T_\beta \rangle^2 \langle N_\alpha, T_\beta \rangle  = 0. $$
Suppose that $\langle N_\alpha, T_\beta \rangle \ne 0 $. Then we have $  \langle N_\alpha , T_\beta \rangle^2 = -3 \langle T_\alpha , T_\beta \rangle^2   $, and differentiating  with respect to $s$ gives 
$$ -2 \kappa_\alpha \langle T_\alpha , T_\beta \rangle \langle N_\alpha , T_\beta \rangle = -6 \kappa_\alpha \langle N_\alpha , T_\beta \rangle \langle T_\alpha , T_\beta \rangle , $$
which implies that $\langle T_\alpha, T_\beta \rangle = \langle N_\alpha, T_\beta \rangle = 0 $, a contradiction.  

Suppose now that $\langle N_\alpha , T_\beta \rangle = 0 $. Differentiating with respect to $s$ yields
$ \kappa_\alpha \langle T_\alpha , T_\beta \rangle = 0$. Since $\kappa_\alpha \ne 0$ by hypothesis, it follows that  $\langle T_\alpha , T_\beta \rangle = 0$. Moreover, 
since $T_\beta \perp T_\alpha $ and  $T_\beta \perp N_\alpha $, then $T_\beta = \pm B_\alpha $ which is a constant vector. Thus, $\kappa_\beta \equiv 0 $. 

Now, returning to equation \eqref{Meancurvature} we obtain 
$$ \kappa_\alpha \langle B_\alpha , T_\beta \rangle = 0. $$
Thus $\langle B_\alpha , T_\beta \rangle = 0 $, a contradiction.

\end{proof}

When $\tau_\alpha$ is any constant, not necessarily equal to $1$, we impose conditions on the curve $\beta$ to obtain the following non-existence result:

\begin{theorem}\label{Theo_min_2helix}
   The are no  minimal translation surface $X : I \times J \rightarrow  \mathbb{S}^3$,  $X(s,t)=\alpha(s)\cdot \beta(t)$, where $\kappa_\alpha \ne 0 $, $\kappa_\beta \ne 0 $, $\tau_\alpha$  and $\tau_\beta $ are constant.
\end{theorem}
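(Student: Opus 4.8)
The plan is to argue by contradiction from the minimality identity \eqref{eqmincurvatsimp}, exploiting that the torsions are constant so that, after differentiation, the only non-constant coefficients that can appear are $\kappa_\alpha(s)$ and $\kappa_\beta(t)$. Write $a=\tau_\alpha-1$ and $b=\tau_\beta+1$, both constants, so that \eqref{propertiesalpha} and \eqref{propertiesbeta} read $T_\alpha'=\kappa_\alpha N_\alpha$, $N_\alpha'=-\kappa_\alpha T_\alpha+aB_\alpha$, $B_\alpha'=-aN_\alpha$ and $T_\beta'=\kappa_\beta N_\beta$, $N_\beta'=-\kappa_\beta T_\beta+bB_\beta$, $B_\beta'=-bN_\beta$. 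By Proposition \ref{prop:frames} and \eqref{eq_quater_properties}, both $\{T_\alpha,N_\alpha,B_\alpha\}$ and $\{T_\beta,N_\beta,B_\beta\}$ are orthonormal bases of the three-dimensional space $\mathcal S$ of imaginary quaternions; hence the nine inner products $\langle(\cdot)_\alpha,(\cdot)_\beta\rangle$ are the entries of an orthogonal matrix, and an $s$-derivative (resp. $t$-derivative) only rotates the $\alpha$-index (resp. $\beta$-index) through the equations above. This is the bookkeeping device that keeps the elimination finite.

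First I would dispose of the degenerate torsion values $\tau_\alpha=1$ (i.e. $a=0$) and $\tau_\beta=-1$ (i.e. $b=0$). When $a=0$ the vector $B_\alpha$ is constant, so $\langle B_\alpha,T_\beta\rangle$, $\langle B_\alpha,N_\beta\rangle$, $\langle B_\alpha,B_\beta\rangle$ depend on $t$ alone; substituting this into \eqref{eqmincurvatsimp} and differentiating in $s$ along the lines of Theorem \ref{theokconsttau-1} should force $\langle B_\alpha,T_\beta\rangle=\langle N_\alpha,T_\beta\rangle=\langle T_\alpha,T_\beta\rangle=0$, which is impossible because the unit vector $T_\beta\in\mathcal S$ cannot be orthogonal to the whole basis $\{T_\alpha,N_\alpha,B_\alpha\}$. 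The case $b=0$ is symmetric, so from now on $a\neq0$ and $b\neq0$.

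In the generic case I would differentiate \eqref{eqmincurvatsimp} repeatedly. One $s$-derivative gives \eqref{eqmincurvatsimpder}; the symmetric $t$-derivative gives $\kappa_\alpha\kappa_\beta\langle B_\alpha,N_\beta\rangle-\kappa_\beta'\langle T_\alpha,B_\beta\rangle+b\kappa_\beta\langle T_\alpha,N_\beta\rangle=2\kappa_\beta\langle T_\alpha,N_\beta\rangle[3\langle T_\alpha,T_\beta\rangle^2-1]$. Continuing to differentiate in $s$ and in $t$, and substituting \eqref{eqmincurvatsimp} at each stage to lower the polynomial degree exactly as in the three-step chain of Theorem \ref{theokconsttau-1}, I obtain polynomial relations among the nine inner products whose coefficients are built only from $\kappa_\alpha,\kappa_\alpha'$, $\kappa_\beta,\kappa_\beta'$ and the constants $a,b$. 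Since each such relation separates into a purely $s$-dependent part and a purely $t$-dependent part, comparing the two is expected to force $\kappa_\alpha$ and $\kappa_\beta$ to be constant (so that $\alpha$ and $\beta$ are proper helices) together with the vanishing of a maximal family of the products involving $T_\beta$ (or, symmetrically, $T_\alpha$).

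To close, I would then invoke Proposition \ref{lemma_proper_helix}: once $\kappa_\alpha,\kappa_\beta$ are constant, $T_\alpha,N_\alpha,B_\alpha$ and $T_\beta,N_\beta,B_\beta$ describe circles in $\mathcal S$, so every entry $\langle(\cdot)_\alpha,(\cdot)_\beta\rangle$ becomes an explicit trigonometric function of $s$ and of $t$. Feeding these into \eqref{eqmincurvatsimp} turns minimality into a trigonometric identity in the two independent variables that can hold for all $(s,t)$ only if the amplitudes or frequencies degenerate, i.e. only if $\kappa_\alpha\kappa_\beta=0$, contradicting $\kappa_\alpha,\kappa_\beta\neq0$; equivalently one is driven back to the orthogonality impossibility $\langle T_\alpha,T_\beta\rangle=\langle N_\alpha,T_\beta\rangle=\langle B_\alpha,T_\beta\rangle=0$. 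I expect the main obstacle to be the middle step: organizing the repeated differentiations and the $s$--$t$ separation so that the elimination terminates, and verifying that every branch of the case analysis (the vanishing or not of $\langle N_\alpha,T_\beta\rangle$ and $\langle T_\alpha,N_\beta\rangle$, together with the two degenerate torsion values) ends in the same contradiction.
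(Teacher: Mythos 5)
Your reading of the hypotheses differs from the paper's, and this matters. In the paper's proof both curvatures are constant: the generating curves are \emph{proper helices} (the proof states ``Since $\kappa_\alpha$ and $\kappa_\beta$ are constant, we differentiate \eqref{eqmincurvatsimpder} with respect to $t$'' and invokes Proposition \ref{lemma_proper_helix}, which applies to general helices, i.e.\ curves with $\tau = b\kappa \pm 1$ --- a condition that constant torsion alone does not give unless $\tau = \pm 1$). Under your reading (torsions constant, curvatures merely nonzero) your middle step becomes load-bearing, and that is exactly where the proposal has a genuine gap: the claim that the relations obtained by repeatedly differentiating \eqref{eqmincurvatsimp} ``separate into a purely $s$-dependent part and a purely $t$-dependent part'' is not true. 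Every term in those relations is a product of $\kappa_\alpha(s)$, $\kappa_\alpha'(s)$, $\kappa_\beta(t)$, $\kappa_\beta'(t)$ with inner products such as $\langle N_\alpha(s), T_\beta(t)\rangle$ that depend \emph{jointly} on $(s,t)$, so no separation-of-variables comparison is available, and you give no argument that the elimination terminates or that it forces $\kappa_\alpha$, $\kappa_\beta$ to be constant. The same issue infects your disposal of the degenerate case $\tau_\alpha = 1$: Theorem \ref{theokconsttau-1} assumes $\kappa_\alpha$ constant, so its three-step differentiation chain does not run ``along the same lines'' once $\kappa_\alpha'$ terms are present.

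Even granting constant curvatures (the paper's actual hypothesis, under which your middle step is unnecessary and your skeleton coincides with the paper's), your closing step asserts rather than proves the contradiction. Saying the resulting trigonometric identity ``can hold only if amplitudes or frequencies degenerate'' is precisely the part the paper has to work for: it parameterizes the two circles traced by the frames in $\mathcal{S}$ by constants $P,Q,R,S$ and a relative angle $\theta$, differentiates the minimality identity once in $s$ (giving \eqref{eqmincurvatsimpder}) and then in $t$ (giving \eqref{eq_last_theo_2}), evaluates all the inner products at the single well-chosen point $(x/Q, y/S) = (0,\pi/2)$ to obtain $12PQ\,(S\cos\theta - R\sin\theta)(R\cos\theta + S\sin\theta) = 0$, and then kills each of the two branches by substituting the resulting special configurations back into \eqref{eqmincurvatsimp} and \eqref{eqmincurvatsimpder}, with the case where both factors vanish being self-contradictory. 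None of that case analysis --- nor any substitute for it --- appears in your proposal. So what you have is the correct outline (degenerate torsions via Theorem \ref{theokconsttau-1}, circles via Proposition \ref{lemma_proper_helix}, contradiction from the relative geometry of the two circles) with the two decisive computations left as expectations rather than arguments.
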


\begin{proof}
Since the case where $\tau_\alpha = 1$ and $\tau_\beta = - 1 $ was treated in Theorem \ref{theokconsttau-1}, we assume from now on that $ \tau_\alpha \ne 1$ and  $\tau_\beta \ne - 1$. Let $\alpha(s)$ and $\beta(t)$ be arc length curves that are also proper helices in $\mathbb{S}^3$. By lemma \ref{lemma_proper_helix}, the curves $T_\alpha(s)$, $N_\alpha(s)$, $ B_\alpha(s) $, $T_\beta(t)$, $N_\beta(t)$ and $B_\beta(t)$ all trace circles in $\mathcal{S}$. Moreover, assuming without loss of generality that $(1-\tau_\alpha) > 0 $ and $- (1+ \tau_\beta) > 0$, the curves $\tilde{\alpha}(s) = B_\alpha(s)$ and $\tilde{\beta}(t) = B_\beta(t)$ have curvatures $\tilde{\kappa}_\alpha = \kappa_\alpha / (1 - \tau_\alpha) $ and $\tilde{\kappa}_\beta = -\kappa_\beta / (1 +  \tau_\beta) $, respectively. 

Now, since $T_\alpha, N_\alpha, B_\alpha, T_\beta, N_\beta, B_\beta  \in \mathcal{S}$, to compute $\langle T_\alpha, T_\beta\rangle $, $\langle B_\alpha, T_\beta\rangle $ and $\langle T_\alpha, B_\beta\rangle$, we reduce the problem to one in $\mathbb{S}^2 \subset \mathbb{R}^3$.  This reduction is valid because the values of these inner products depend only on the angles between the vectors involved, not on their specific positions
 
(see Figure \ref{fig_2circ}). 
\begin{figure}[h]
\centering
\includegraphics[width=0.3\linewidth]{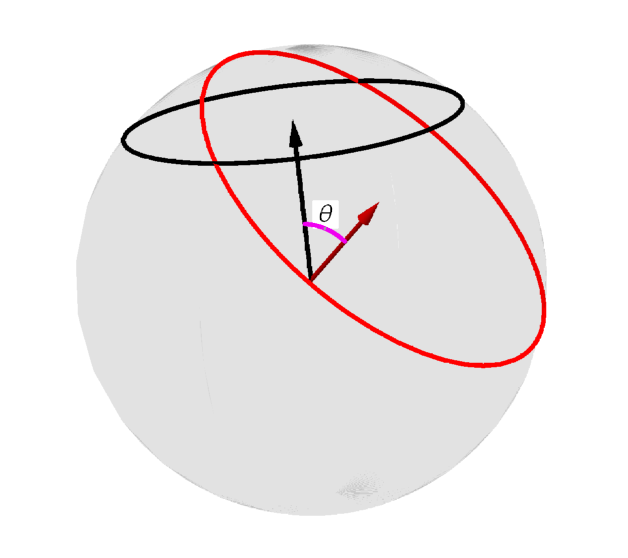}
\caption{\label{fig_2circ} Illustration of the relative position of the circles in Theorem \ref{Theo_min_2helix}.}
\end{figure} 

Thus, let $x(s) = (1 - \tau_\alpha) s $ and $y(t) = -(1 + \tau_\beta) t $ be arc lengths parameters for $\tilde{\alpha}$ and $\tilde{\beta}$, respectively. We can then choose convenient coordinates and parameterize these curves as follows 
$$\begin{array}{rl}
    \tilde{\alpha}(x) =  B_\alpha (x) & =  \left(Q \cos\left(\frac{1}{Q} x \right), Q \sin\left(\frac{1}{Q} x \right), P \right) ,  \\ 
    \tilde{\beta}(y) =  B_\beta (y) & = \left( S \cos\left(\frac{1}{S} y \right), S \cos \theta \sin\left(\frac{1}{S} y \right)  - R \sin \theta, S \sin \theta \sin\left(\frac{1}{S} y \right) + R \cos \theta  \right) .
\end{array}$$
where $0 < \theta < \pi$ is a constant angle and $P,Q, R,S > 0 $,  $P^2 + Q^2 = S^2 + R^2 = 1 $. 

Now, in order to better describe the curvatures of the generating curves, we differentiate $B_\alpha$ with respect to $x$ to obtain
$$  B_\alpha' =  N_\alpha  = \bigg( - \sin\bigg(\frac{1}{Q} x \bigg), \cos\bigg(\frac{1}{Q} x \bigg) , 0 \bigg) . $$ 
A second differentiation with respect to $x$ gives 
$$  N_\alpha' =  - \frac{1}{Q} \bigg( \cos\bigg(\frac{1}{Q} x \bigg), \sin\bigg(\frac{1}{Q} x \bigg) , 0 \bigg) . $$ 
Since $N_\alpha' = \tilde{\kappa}_\alpha n_{\tilde{\alpha}} - \tilde{\alpha} $, we have $  \langle N'_\alpha , N'_\alpha \rangle =  \tilde{\kappa}_\alpha^2 +  1 = 1/Q^2 $. Thus 
$$  \tilde{\kappa}_\alpha^2 =  \frac{1- Q^2}{Q^2} = \frac{P^2}{Q^2} .  $$
Symmetrically, $\tilde{\kappa}_\beta^2 = R^2/S^2 $ and 
$$  N_\beta =  \bigg( - \sin\bigg(\frac{1}{S} y \bigg), \cos(\theta) \cos \bigg(\frac{1}{S} y \bigg) , \sin(\theta) \cos\left( \frac{1}{S} y\right) \bigg). $$ 
It follows that  
$$  \frac{\kappa_\alpha}{1 - \tau_\alpha} =  \tilde{\kappa}_\alpha  =  \frac{P}{Q} , \ \ \ \ \  \frac{\kappa_\beta}{1 + \tau_\beta } =  \tilde{\kappa}_\beta  =  \frac{R}{S}  . $$
Now, since $\frac{d}{ds} N_\alpha = - \kappa_\alpha T_\alpha + ( \tau_\alpha - 1 ) B_\alpha$, then 

$$ T_\alpha = \left( P \cos\bigg(\frac{1}{Q} s \bigg), P \sin\bigg(\frac{1}{Q} s \bigg), - Q \right) . $$
Symmetrically, we have
$$ T_\beta = \left( R \cos\left(\frac{1}{S} t  \right), R \cos \theta \sin\left(\frac{1}{S} t \right) + S \sin \theta , R \sin \theta \sin\left(\frac{1}{Q} t \right) - S \cos \theta \right). $$
 
Since $\kappa_\alpha$ and $\kappa_\beta$,
are constant, we differentiate \eqref{eqmincurvatsimpder} with respect to $t$ to obtain

    \begin{multline*}
        \kappa_\alpha \kappa_{\beta} \left[(\tau_\beta + 1 ) -  (\tau_\alpha - 1 ) \right]  \langle  N_\alpha,  N_\beta \rangle  =  \  2 \kappa_\alpha \kappa_\beta \langle N_\alpha, N_\beta \rangle [ 3 \langle T_\alpha, T_\beta \rangle^2 - 1 ] + \\ +  12 \kappa_\alpha \kappa_\beta \langle N_\alpha, T_\beta \rangle \langle T_\alpha, T_\beta \rangle  \langle T_\alpha, N_\beta \rangle .
    \end{multline*}
Now, as $\kappa_\alpha \ne 0$  and  $\kappa_\beta \ne 0$, we have 
\begin{align}
       [(\tau_\beta + 1 )  - (\tau_\alpha - 1 )] \langle  N_\alpha,  N_\beta \rangle  = & \  2  \langle N_\alpha, N_\beta \rangle [ 3 \langle T_\alpha, T_\beta \rangle^2 - 1 ] +   12  \langle N_\alpha, T_\beta \rangle \langle T_\alpha, T_\beta \rangle  \langle T_\alpha, N_\beta \rangle \label{eq_last_theo_2} .
\end{align}
Evaluating $T_\alpha$, $N_\alpha$, $B_\alpha$, $T_\beta$, $N_\beta$ and $B_\beta$ in $(x/Q ,  y/S) = (0 , \pi/2)$ gives
$$ \begin{array}{rclcrcl}
    T_\alpha & = & (P,0,-Q) ,  &  & T_\beta & = & (0,R \cos(\theta) + S \sin(\theta), R \sin(\theta) - S \cos(\theta)) , \\
    N_\alpha & = & (0,1,0) , &  & N_\beta & = & (-1,0,0)  , \\
    B_\alpha & = & (Q,0,P) , &  & B_\beta & = & (0,S \cos(\theta) - R \sin(\theta), S \sin(\theta) + R \cos(\theta)) .
\end{array}$$
Hence 
$$ \begin{array}{rclcrcl}
     \langle T_\alpha, T_\beta  \rangle & = & Q (S \cos\theta- R \sin\theta) , &  &  \langle N_\alpha, N_\beta  \rangle & = & 0 ,  \\ 
     \langle N_\alpha, T_\beta  \rangle & = & R \cos\theta + S \sin\theta , &  &  \langle T_\alpha, N_\beta  \rangle & = & - P .
\end{array}$$
Thus, equation \eqref{eq_last_theo_2} becomes 
\begin{equation}\label{eq_last_theo_3}
    12 PQ (S \cos\theta - R \sin\theta) (R \cos\theta +  S \sin\theta) = 0 .
\end{equation}
If $S \cos\theta = R \sin\theta $ then $\cos\theta = (R/S) \sin\theta$, with $\theta \ne \pi$.  Hence 
$$ R \cos\theta +  S \sin\theta = \left( \frac{R^2}{S} + S \right)\sin\theta  = \frac{1}{S} \sin\theta \ne 0 . $$
Thus, $(S \cos\theta - R \sin\theta) = 0 $ and $ (R \cos\theta +  S \sin\theta) \ne 0$. We have 

$$ T_\beta = \left(0,\frac{1}{S} \sin\theta , 0\right), \ \ \  N_\beta = (-1,0,0), \ \ \  B_\beta = \left(0,0 , \frac{1}{S} \sin\theta \right) . $$
It follows that $\langle T_\alpha, T_\beta  \rangle = \langle B_\alpha, T_\beta  \rangle = 0 $ and $\langle T_\alpha, B_\beta  \rangle = - (Q / S) \sin \theta$. Evaluating in equation \eqref{eqmincurvatsimp} gives $$ \kappa_\alpha \frac{Q}{S}  \sin \theta = 0, $$ 
a contradiction. 

Suppose now that $R \cos\theta = -  S \sin\theta $, then $\cos\theta = - (S/R) \sin\theta$, with $\theta \ne \pi$.  Hence 
$$ S \cos\theta - R \sin\theta = \left(- \frac{S^2}{R} - R \right)\sin\theta  = - \frac{1}{R} \sin\theta \ne 0 . $$
Thus, $(S \cos\theta - R \sin\theta) \ne 0 $ and $ (R \cos\theta +  S \sin\theta) = 0$. We have 

$$ T_\beta = \left(0,0, \frac{1}{R} \cos(\theta)\right), \ \ \  N_\beta = (-1,0,0), \ \ \ B_\beta = \left(0, - \frac{1}{R} \sin(\theta), 0 \right) . $$
It follows that $\langle N_\alpha, T_\beta  \rangle = 0 $, $\langle N_\alpha, B_\beta  \rangle = - \frac{1}{R} \sin\theta $ and $\langle T_\alpha, T_\beta  \rangle = - (Q/R) \sin \theta  $. Evaluating in equation \eqref{eqmincurvatsimpder} gives 
$$\frac{1}{R}\kappa_\alpha \kappa_\beta \sin \theta  = 0, $$ 
a contradiction. 

Thus, we must have $(S \cos\theta - R \sin\theta) = (R \cos\theta +  S \sin\theta) = 0$, a contradiction  with equation \eqref{eq_last_theo_3}.  

\end{proof}

\section{Declarations}

\subsection{Funding}
T. A. Ferreira was supported by CNPq - Conselho Nacional de Desenvolvimento Cient\'ifico e Tecnol\'ogico grant number 162148/2021-6 and CAPES Print Process N° 88887.890396/2023-00. J. P. dos Santos was supported by FAPDF - Fundação de Apoio a Pesquisa do Distrito Federal, grant number 00193-00001678/2024-39, and CNPq, grant number 315614/2021-8.

\subsection{Competing interests}
The authors declare that they have no competing interests relevant to the content of this article.

\subsection{Data Availability}
Data sharing is not applicable to this article, as no datasets were generated or analyzed during the study.

\bibliographystyle{plain}
\bibliography{references}

\end{document}